\documentclass[a4paper,10pt]{amsart}

\usepackage{amsmath}
\usepackage{amssymb}
\usepackage{amsthm}
\usepackage[a4paper]{geometry}
\usepackage{todonotes}
\usepackage{enumerate}   
\usepackage{cite}
\usepackage{enumitem}
\usepackage{hyperref}

\theoremstyle{plain}
\newtheorem{thm}{Theorem}[section]
\newtheorem*{thm*}{Theorem}
\newtheorem{cor}{Corollary}[thm]
\newtheorem{lem}[thm]{Lemma}
\newtheorem{prop}[thm]{Proposition}

\theoremstyle{definition}
\newtheorem{defn}{Definition}[section]
\newtheorem{exmp}{Example}[section]

\theoremstyle{remark}
\newtheorem*{rem}{Remark}

\newcommand\inner[2]{\langle #1, #2 \rangle}

\newcommand{\tco}{\mathcal{T}^1}
\newcommand{\bo}{B(L^2(\R^d))}
\newcommand{\SC}{\mathcal{T}}

\newcommand{\C}{\mathbb{C}}
\newcommand{\N}{\mathbb{N}}
\newcommand{\R}{\mathbb{R}}
\newcommand{\Rd}{\mathbb{R}^d}
\newcommand{\Rdd}{\mathbb{R}^{2d}}


\newcommand{\Ldd}{L^1(\Rdd)}

\newcommand{\HS}{L^2(\R^d)}
\newcommand{\F}{\mathcal{F}}
\newcommand{\tr}{\mathrm{tr}}
\newcommand{\schwartz}{\mathfrak{S}}
\newcommand{\tempdist}{\mathfrak{S}^{\prime}}

\begin{document}
\pagestyle{plain}
\title{Mixed-state localization operators: Cohen's class and trace class operators}
\author{Franz Luef}
\author{Eirik Skrettingland} 
\address{Department of Mathematics\\ NTNU Norwegian University of Science and
Technology\\ NO–7491 Trondheim\\Norway}
\email{franz.luef@math.ntnu.no, eirik.skrettingland@ntnu.no}
\keywords{localization operators, Cohen class, uncertainty principle, phase retrieval, positive operator valued measures}
\subjclass{47G30; 35S05; 46E35; 47B10}
\begin{abstract}
We study mixed-state localization operators from the perspective of Werner's operator convolutions which allows us to extend known results from the rank-one case to trace class operators. The idea of localizing a signal to a domain in phase space is approached from various directions such as bounds on the spreading function, probability densities associated to mixed-state localization operators, positive operator valued measures, positive correspondence rules and variants of Tauberian theorems for operator translates. Our results include a rigorous treatment of multiwindow-STFT filters and a characterization of mixed-state localization operators as positive correspondence rules. Furthermore we provide a description of the Cohen class in terms of Werner's convolution of operators and deduce consequences on positive Cohen class distributions, an uncertainty principle, uniqueness and phase retrieval for general elements of Cohen's class.  
\end{abstract}
\maketitle \pagestyle{myheadings} \markboth{F. Luef and E. Skrettingland}{Mixed-state localization operators}
\thispagestyle{empty}
\section{Introduction}

We are addressing some key problems of time-frequency analysis: (i) How to measure the time-frequency content of a signal? (ii)  What is the 
effect a (linear) filter has on a signal? Over the years engineers and mathematicians have investigated these questions and have proposed a variety of 
answers as is demonstrated by the vast literature \cite{bocogr04,Boggiatto:2010,Boggiatto:2017,Cordero:2003,Cordero:2005,da88,Flandrin:1988,Janssen:1997,Ramanathan:1994,Ramanathan:1994spec}. We approach these problems from the perspective of quantum harmonic analysis and 
note that notions and results in \cite{Werner:1984} provide a unifying umbrella for some of the research in this direction such as localization operators, multiwindow STFT-filters, Cohen's class of quadratic time-frequency representations and the spreading function of a filter.
  
Harmonic analysis is based on the interplay between the translation of a function, convolution of functions and the Fourier transform. In \cite{Werner:1984} analogues of these notions are introduced for operators:  The {\it translation} of an operator $A$ by a point $z=(x,\omega)$ in phase space $\Rdd$ is defined by conjugation with the time-frequency shift $\pi(z)$:
\begin{equation*}
  \alpha_z(A)=\pi(z)A\pi(z)^*,
\end{equation*}
where $\pi(z)\psi(t)=e^{2\pi i\omega t}\psi(t-x)$. In \cite{Luef:2017vs} we showed that this yields a natural class of Banach modules. There are two types of convolutions in this noncommutative setting: (i) The convolution between a function  $f\in L^1(\R^{2d})$ 
and a trace class operator  $S$: 
\begin{equation*}
  f\star S := S\star f := \iint_{\R^{2d}}f(y)\alpha_y(S) \ dy;
\end{equation*}
(ii) the convolution between two trace class operators $S$ and $T$ is defined by 
\begin{equation*}
  S \star T(z) = \tr(S\alpha_z (\check{T}))
\end{equation*}
for $z\in \R^{2d}$, where $\check{T}=PTP$ is defined by conjugation by the parity operator $P$. Finally, the analogue of the Fourier transform is given by the {\it Fourier-Wigner transform}
$\F_WS$ of a trace class operator $S$, which is the function given by
	\begin{equation*}
	\F_W S(z)=e^{-\pi i x \cdot \omega}\tr(\pi(-z)S)
	\end{equation*}
	for $z\in \R^{2d}$. Note that the Fourier-Wigner transform and the spreading function differ only by a phase factor. The Fourier-Wigner transform has many properties analogous to those of the Fourier transform of functions\cite{Werner:1984, Luef:2017vs}.
\\
	In the case of rank-one operators these concepts of quantum harmonic analysis turn into well-known objects from time-frequency analysis. Suppose $\varphi_2 \otimes \varphi_1$ is the rank-one operator for 
	$\varphi_1,\varphi_2\in L^2(\Rd)$. Then we have 
	\[f\star (\varphi_2 \otimes \varphi_1)=\iint_{\mathbb{R}^{2d}}f(z)V_{\phi_1}\psi(z)\pi(z)\phi_2\,dz, \]
which is a localization operator (or STFT-filter or STFT-multiplier \cite{Kozek:1992,feno03}) and is denoted by $\mathcal{A}_f^{\varphi_1,\varphi_2}$, and $f$ is called the mask of the STFT-filter. Similarly, the convolution of two rank-one operators becomes 
\[(\phi\otimes \psi)\star (\check{\xi} \otimes \check{\eta})(z)= V_{\eta}\phi(z) \overline{V_{\xi}\psi(z)},\] 
where $\check{\xi}(x)=\xi(-x)$, which reduces for $\eta=\psi$ and $\psi=\phi$ to the spectrogram \cite{Keller:2017}. The Fourier-Wigner transform of a rank-one operator is the ambiguity function. There is also a Hausdorff-Young inequality associated to the Fourier-Wigner transform \cite{Werner:1984, Luef:2017vs}, that in the rank-one case is the non-sharp Lieb's inequality for ambiguity functions \cite{li90-1}. 
\\
Let us return to the objectives of this paper. Since localization operators are convolutions of a function and a rank-one operator, a natural extension of localization operators are operators of the form $f\star S$ for a trace-class operator $S$. The results of this paper indicate that these operators describe the time-frequency localization in various ways. For example we are interested in the amount of "spreading" in time and frequency that an operator performs on a function which we describe in form of bounds on the concentration of the spreading function, or equivalently of its Fourier-Wigner transform. The next theorem is an example for the type of statements we have in mind: 
\begin{thm*}
Let $S$ be a trace-class operator and let $\Omega\subset \R^{2d}$ with $\mu(\Omega)<\infty$ and assume that
	\begin{equation*}
  \iint_{\Omega} |\F_W(S)(z)|^2 \ dz \geq 1-\epsilon
\end{equation*}
for some $\epsilon \geq 0$. For any $p>2$ we then have
\begin{equation*}
  \mu(\Omega)\geq \frac{(1-\epsilon)^{p/(p-2)} \left(\frac{p}{2}\right)^{2d/(p-2)}}{\|S\|_{\tco}^{2p/(p-2)}},
\end{equation*}
where $\|S\|_{\tco}$ denotes the trace class norm of $S$. 
\end{thm*}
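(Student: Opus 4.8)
The plan is to reduce the assertion to a single $L^p$-norm bound on the Fourier--Wigner transform and then split the integral over $\Omega$ by Hölder's inequality. Write $F=\F_W S$. The decisive analytic input is a Lieb-type inequality for the Fourier--Wigner transform: for every $p\geq 2$,
\begin{equation*}
\|\F_W S\|_{L^p(\Rdd)}^{p} \leq \left(\frac{2}{p}\right)^{d} \|S\|_{\tco}^{p}.
\end{equation*}
I expect this to be available from the Hausdorff--Young inequality for $\F_W$ recorded earlier, whose rank-one specialization is precisely Lieb's inequality for the (cross-)ambiguity function. Since the excerpt advertises that rank-one version as the \emph{non-sharp} Lieb inequality, the sharp constant above should be recovered directly from a singular value decomposition $S=\sum_n s_n\,(\xi_n\otimes\eta_n)$ with $\|\xi_n\|_2=\|\eta_n\|_2=1$ and $\sum_n s_n=\|S\|_{\tco}$: linearity of $\F_W$ together with the triangle inequality in $L^p$ gives $\|\F_W S\|_{L^p}\leq \sum_n s_n\,\|\F_W(\xi_n\otimes\eta_n)\|_{L^p}$, and each rank-one term is governed by the classical sharp Lieb inequality with constant $(2/p)^{d/p}$ because $\|\xi_n\otimes\eta_n\|_{\tco}=\|\xi_n\|_2\|\eta_n\|_2=1$ and the phase factor $e^{-\pi i x\cdot\omega}$ does not affect $|\F_W S|$.

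Granting this bound, the remainder is routine. I would apply Hölder's inequality on $\Omega$ with the conjugate exponents $p/2$ and $p/(p-2)$ to the integrand $|F|^{2}=|F|^{2}\cdot 1$, and then enlarge the domain of integration from $\Omega$ to $\Rdd$:
\begin{equation*}
\iint_{\Omega} |F(z)|^{2}\,dz \leq \left(\iint_{\Omega} |F(z)|^{p}\,dz\right)^{2/p}\mu(\Omega)^{(p-2)/p}\leq \|F\|_{L^p(\Rdd)}^{2}\,\mu(\Omega)^{(p-2)/p}.
\end{equation*}
Combining this with the hypothesis $\iint_{\Omega}|F|^{2}\,dz\geq 1-\epsilon$ and the squared Lieb-type bound $\|F\|_{L^p(\Rdd)}^{2}\leq (2/p)^{2d/p}\|S\|_{\tco}^{2}$ yields
\begin{equation*}
1-\epsilon \leq \left(\frac{2}{p}\right)^{2d/p}\|S\|_{\tco}^{2}\,\mu(\Omega)^{(p-2)/p}.
\end{equation*}

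Finally I would isolate $\mu(\Omega)$ by raising both sides to the power $p/(p-2)$, which is legitimate since $p>2$, and rearranging:
\begin{equation*}
\mu(\Omega)\geq \frac{(1-\epsilon)^{p/(p-2)}}{\left(\frac{2}{p}\right)^{2d/(p-2)}\|S\|_{\tco}^{2p/(p-2)}}=\frac{(1-\epsilon)^{p/(p-2)}\left(\frac{p}{2}\right)^{2d/(p-2)}}{\|S\|_{\tco}^{2p/(p-2)}},
\end{equation*}
which is exactly the claimed estimate. The only genuinely nontrivial ingredient is the sharp Lieb-type inequality; everything after it is Hölder's inequality and bookkeeping with exponents. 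I therefore expect the main obstacle to lie not in this theorem but in securing the sharp constant $(2/p)^{d}$ in the $L^p$-bound for $\F_W S$ — specifically in checking that passing from the rank-one Lieb inequality to trace class operators through the singular value decomposition does not degrade the constant, which the triangle inequality in $L^p$ guarantees it does not.
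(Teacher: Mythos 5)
Your argument is correct and coincides with the paper's proof: both hinge on Hölder's inequality with exponents $p/2$ and $p/(p-2)$ applied to $|\F_W(S)|^2\chi_\Omega$, followed by the Lieb-type bound $\|\F_W(S)\|_{L^p}\leq (2/p)^{d/p}\|S\|_{\tco}$ (Corollary \ref{cor:generalizedlieb}) and the same rearrangement of exponents. The only difference is that you additionally sketch a derivation of that corollary from the sharp rank-one Lieb inequality via the singular value decomposition and the triangle inequality in $L^p$, whereas the paper simply cites it from \cite{Luef:2017vs}; your sketch is sound and does not change the route.
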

One interpretation of this uncertainty principle is that a well-concentrated spreading function comes at the cost of a large trace class norm.	The proof is a consequence of the Hausdorff--Young inequality for the Fourier-Wigner transform of $S$.
\\
In the engineering literature \cite{Kozek:1992,Hlawatsch:1992} one calls an operator 
\begin{equation*}
 H= \sum_{n} \lambda_n \mathcal{A}_f^{\varphi_{n,1},\varphi_{n,2}}
\end{equation*}
a \textit{multiwindow STFT-filter}, where $\{ \lambda_n \}_{n\in \N}$ is a sequence of complex numbers and $\{\varphi_{n,1}\}_{n\in \N}$ and $\{\varphi_{n,2}\}_{n\in \N}$ are sequences of functions in $L^2(\R^d)$, \cite{Kozek:1992}. Multiwindow STFT filters might be thought of as an operator that change the signal by some smearing. We give a rigorous treatment of the boundedness of multiwindow STFT filters depending on the sequence $(\lambda_n)_{n\in\N}$ and prove that multiwindow STFT-filters are given by a function convolved with an operator. 
\\
In addition we consider the set of multiwindow STFT-filters $f\star S$ for functions $f$ for a fixed trace-class operator $S$. Using the Tauberian theorem for convolutions with operators (theorem \ref{thm:wiener}), we are able to show (under some assumptions on the Fourier-Wigner spectrum): (i) any Schatten class operator $T$ may be approximated by operators of the form $f\star S$; (2) that the mask $f$ is uniquely determined by the operator $f\star S$. As a sample we have results of the following form: For a trace-class operator $S$ the following are equivalent:
	\begin{enumerate}
		\item The set $\{z\in \R^{2d}:\F_W(S)=0\}$ is empty.
		\item The set of multiwindow STFT-filters $ f\star S$ with $f\in L^1(\Rdd)$ is dense in the set of trace-class operators.
		\item Any mask $f \in L^{\infty}(\mathbb{R}^{2d})$ is uniquely determined by the multiwindow STFT-filter $f\star S$.
	\end{enumerate}
 In order to gain some understanding of the notion of localization in this context, we focus on operators $H_{\Omega}$ of the form
\begin{equation*}
  H_{\Omega}= \chi_{\Omega}\star S
\end{equation*}
where $\chi_\Omega$ is the indicator function of a measurable subset $\Omega$ of $\Rdd$ and $S$ is a positive trace class operator with $\tr(S)=1$. We refer to these operators as \textit{mixed-state localization operators}. 
\\
Given a mixed-state localization operator $\chi_{\Omega}\star S$, one might ask whether it is possible to recover information about the domain $\Omega$ from the operator $\chi_{\Omega}\star S$. We show that the measure of $\Omega$ may be calculated from the eigenvalues of $\chi_{\Omega}\star S$ and  we also consider the problem of reconstructing the domain $\Omega$ from $H_\Omega$. Finally we also discuss in which sense an operator $H_\Omega$ measures the time-frequency content of a signal in a domain $\Omega$. These questions have received some attention \cite{Abreu:2012,Abreu:2016} in recent years. Our techniques provide a way to handle unbounded domains, which have not been treated previously in the literature.
\\
The treatment of mixed-state localization operators leads us to the investigation of Cohen class distributions \cite{Cohen:1966}. We show that any Cohen class distribution $Q_S(\psi)$ is of the form
\begin{equation*}
  Q_S(\psi)=(\psi\otimes \psi)\star \check{S},
\end{equation*} 
where $S$ is a trace-class operator. We establish an uncertainty principle for Cohen class distributions and ask whether any square-integrable function is uniquely determined by the associated Cohen class distribution, which in a special case was discussed in \cite[Remark A.4]{Grohs:2017} for the spectrogram. In addition we characterize when Cohen class distributions are positive and have the correct total energy properties.
\\
We observe also that mixed-state localization operators define positive operator valued measures (POVMs), a standard tool in quantum mechanics, see 
\cite{Moran:2013,Han:2014,Han:2014a} for some relations between POVMs and frame theory. By a theorem of Holevo \cite{Holevo:1979} of positive correspondence rules we have that this is in a sense the only way to produce (covariant) POVMs. We will also argue that the notion of POVM is a natural framework for localization operators and Cohen's class of time-frequency distributions and that a POVM allows one to construct a probability measure on phase space. This measure is absolutely continuous with respect to Lebesgue measure and its Radon-Nikodym derivative is a positive Cohen class distribution. 
\section{Notation and terminology}
If $X$ is a Banach space we will denote its dual space by $X^*$, and for $x\in X$ and $x^* \in X^*$ we write $\inner{x^*}{x}_{X^*,X}$ to denote $x^*(x)$. $\inner{\cdot}{\cdot}$ denotes the inner product on the Hilbert space $L^2(\Rd)$. Note that $\inner{\cdot}{\cdot}_{X^*,X}$ is bilinear, whereas $\inner{\cdot}{\cdot}$ is antilinear in the second argument.
Elements of $\R^{2d}$ will often be written in the form $z=(x,\omega)$ for $x,\omega\in \R^d$, and the Lebesgue measure of a subset $\Omega\subset \R^{2d}$ will be denoted by $\mu(\Omega)$. The characteristic function of $\Omega \subset \R^{2d}$ is denoted by $\chi_{\Omega}$. $\sigma(z,z^\prime)$ is the standard symplectic form $\sigma(z,z^\prime)=\omega_1\cdot x_2-\omega_2 \cdot x_1$ of $z=(x_1,\omega_1)$ and $z^\prime=(x_2, \omega_2)$. For two functions $\xi, \eta$ in the Hilbert space $L^2(\R^d)$, we define the operator $\xi \otimes \eta$ on $\HS$ by $\xi \otimes \eta (\zeta)=\inner{\zeta}{\eta}\xi$, where $\zeta \in \HS$. 
The space of Schwartz functions on $\R^{2d}$ is denoted by $\mathcal{S}(\R^{2d})$ and its dual space of tempered distributions by $\mathcal{S}^{\prime}(\R^{2d})$.
We introduce the parity operator $P$ by  $\check{\psi}(x)=P\psi(x)=\psi(-x)$ for any $x\in \R^d$ and $\psi:\R^d\to \mathbb{C}$, and define $\psi^*$ by $\psi^*(x)=\overline{\psi(x)}$. 
\section{Preliminaries}
\subsection{Concepts from time-frequency analysis}
\subsubsection{The symplectic Fourier transform}
For functions $f\in L^1(\R^{2d})$ we will use the \textit{symplectic Fourier transform} $\F_{\sigma} f$, given by 
\begin{equation*}
\F_{\sigma} f(z)=\iint_{\R^{2d}} f(z') e^{-2 \pi i \sigma(z,z')} \ dz'
\end{equation*}
for $z\in \R^{2d}$, where $\sigma$ is the standard symplectic form $\sigma((x_1,\omega_1),(x_2, \omega_2))=\omega_1\cdot x_2-\omega_2 \cdot x_1$. $\F_{\sigma}$ extends to a unitary operator on $L^2(\Rdd)$, and this extension satisfies $\F_{\sigma}^2=I$, where $I$ is the identity operator\cite{deGosson:2011wq}.
\subsubsection{The STFT, Wigner distribution and the Weyl calculus}
If $\psi:\R^d\to \mathbb{C}$ and $z=(x,\omega)\in \R^{2d}$, we define the \textit{translation operator} $T_x$ by $T_x\psi (t)=\psi(t-x)$, the \textit{modulation operator} $M_{\omega}$ by $M_{\omega}\psi (t)=e^{2 \pi i \omega \cdot t} \psi (t)$ and the \textit{time-frequency shifts} $\pi(z)$ by $\pi(z)=M_{\omega}T_x$. 
For $\psi,\phi \in L^2(\R^d)$ the \textit{short-time Fourier transform} (STFT) $V_{\phi}\psi$ of $\psi$ with window $\phi$ is the function on $\R^{2d}$
 defined by
 \begin{equation*}
  V_{\phi}\psi(z)=\inner{\psi}{\pi(z)\phi}
\end{equation*}
for $z\in \R^{2d}$. By replacing the inner product above with a duality bracket, the STFT may be extended to other spaces, such as $\psi\in \mathcal{S}(\Rd), \phi \in \mathcal{S}^{\prime}(\Rd)$.
We will also refer to the \textit{cross-ambiguity function} $A(\psi,\phi)$ of $\psi$ and $\phi$, defined by  multiplying the STFT with a phase factor:
$$
  A(\psi, \phi)(z)=e^{\pi i x \cdot \omega} V_{\phi}\psi(z).
$$
For more background on the ambiguity function and its utility in the theory of radar see \cite{Grochenig:2001,Folland:1989}. A close relative of the STFT is the \textit{cross-Wigner distribution} of two functions $\psi$ and $\phi$ on $\R^d$. By definition, the cross-Wigner distribution $W(\psi,\phi)$ is given by
\begin{equation*}
  W(\psi,\phi)(x,\omega)=\int_{\R^d} \psi\left(x+\frac{t}{2}\right)\overline{\phi\left(x-\frac{t}{2}\right)} e^{-2 \pi i \omega \cdot t} \ dt.
 \end{equation*}	
This expression is similar to the definition of the STFT, and in fact $W(\psi,\phi)=\F_{\sigma}(A(\psi,\phi))$ \cite{deGosson:2011wq}.

Using the cross-Wigner distribution, we may introduce the \textit{Weyl calculus}. For $f \in \mathcal{S}^{\prime}(\R^{2d})$ and $\psi,\phi \in \mathcal{S}(\R^d)$, we define the \textit{Weyl transform} $L_{f}$ of $f$ to be the operator given by 
\begin{equation*}
  \inner{L_{f}\psi}{\phi}_{\mathcal{S}^\prime,\mathcal{S}}=\inner{f}{W(\psi,\phi)}_{\mathcal{S}^\prime,\mathcal{S}}.
\end{equation*}

$f$ is called the \textit{Weyl symbol} of the operator $L_{f}$.
\subsubsection{Cohen's class of quadratic time-frequency distributions} \label{sec:cohensclass}
A quadratic time-frequency distribution $Q$ is said to be of \textit{Cohen's class} if $Q$ is given by 
\begin{equation*}
  Q(\psi)=Q_{\phi}(\psi):=W(\psi,\psi)\ast \phi
\end{equation*}
for some $\phi \in \mathcal{S}^{\prime}(\R^{2d})$ \cite{Cohen:1966,Grochenig:2001}. The class of functions $\psi$ to which we may apply $Q_{\phi}$ clearly depends on the distribution $\phi$. The Wigner distribution is obtained by picking $\phi=\delta_0$, where $\delta_0$ is Dirac's delta distribution centered at $0$. Cohen's class contains all shift-invariant, weakly continuous quadratic time-frequency distributions, as is made precise by the following lemma from \cite[Thm. 4.5.1]{Grochenig:2001}.
\begin{lem} \label{lem:grochenig}
	Let $Q$ be a quadratic time-frequency distribution satisfying 
	\begin{enumerate}
		\item $Q(\pi(z)\psi)=T_z(Q(\psi)) $,
		\item $|Q(\psi_1,\psi_2)(0)|\leq \|\psi_1\|_2\|\psi_2\|_2$,
	\end{enumerate}
	for all $z\in \Rdd$ and $\psi_1,\psi_2 \in \HS$. Then $Q(\psi)=W(\psi,\psi)\ast \phi$ for some $\phi\in \mathcal{S}^{\prime}(\Rdd)$.
\end{lem}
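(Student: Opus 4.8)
The plan is to compress the whole distribution $Q$ into a single bounded operator and then to recognize the Weyl symbol of that operator as the sought convolution kernel $\phi$. First I would use hypothesis (2). Passing from the quadratic form to its polarization, the map $(\psi_1,\psi_2)\mapsto Q(\psi_1,\psi_2)(0)$ is a sesquilinear form on $\HS\times\HS$ that is bounded with constant $1$, so the standard correspondence between bounded sesquilinear forms and bounded operators yields a unique $T\in B(\HS)$, with $\|T\|\leq 1$, such that
\begin{equation*}
Q(\psi_1,\psi_2)(0)=\inner{T\psi_1}{\psi_2}.
\end{equation*}
Next I would propagate this identity across phase space by means of the covariance hypothesis (1). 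Written for the sesquilinear form it reads $Q(\pi(z)\psi_1,\pi(z)\psi_2)(w)=Q(\psi_1,\psi_2)(w-z)$; evaluating at $w=0$ and moving the time-frequency shifts to one side (using $\pi(z)^*=\pi(-z)$ up to a unimodular phase that cancels under conjugation), then relabelling $z\mapsto -z$, gives
\begin{equation*}
Q(\psi_1,\psi_2)(z)=\inner{\alpha_z(T)\psi_1}{\psi_2}
\end{equation*}
for all $z\in\Rdd$. Thus $Q$ is entirely encoded by the single operator $T$ together with its translates $\alpha_z(T)$.

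The second half of the argument identifies $T$ with a Weyl symbol and reads off $\phi$. Since $T$ is bounded on $\HS$ it is in particular a continuous operator $\mathcal{S}(\Rd)\to\mathcal{S}^{\prime}(\Rd)$, so by the bijectivity of the Weyl correspondence on tempered distributions there is a unique $a\in\mathcal{S}^{\prime}(\Rdd)$ with $T=L_a$. The decisive structural fact is the covariance of the Weyl calculus, namely $\alpha_z(L_a)=L_{T_z a}$; combining it with the defining relation $\inner{L_f\psi_1}{\psi_2}_{\mathcal{S}^\prime,\mathcal{S}}=\inner{f}{W(\psi_1,\psi_2)}_{\mathcal{S}^\prime,\mathcal{S}}$ of the Weyl transform turns the previous display into
\begin{equation*}
Q(\psi_1,\psi_2)(z)=\inner{T_z a}{W(\psi_1,\psi_2)}_{\mathcal{S}^\prime,\mathcal{S}}.
\end{equation*}
Restricting to the diagonal $\psi_1=\psi_2=\psi$ and unwinding the bilinear pairing as an integral, a single change of variables identifies the right-hand side with $\bigl(W(\psi,\psi)\ast\phi\bigr)(z)$ for $\phi=\check{a}$, which is the asserted form.

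The main obstacle is maintaining rigour at the level of tempered distributions rather than genuine functions. Concretely, one must invoke that the Weyl transform is a bijection from $\mathcal{S}^{\prime}(\Rdd)$ onto the space of continuous maps $\mathcal{S}(\Rd)\to\mathcal{S}^{\prime}(\Rd)$ in order to produce $a$ from the merely bounded operator $T$, and one must justify the covariance identity $\alpha_z(L_a)=L_{T_z a}$ together with the final rewriting as a convolution for a general symbol $a\in\mathcal{S}^{\prime}(\Rdd)$, where $W(\psi,\psi)\ast\phi$ has to be read as a distributional convolution paired against test functions on $\Rdd$. A more elementary but necessary preliminary is the polarization step itself: it is what makes the scalar bound (2) usable as a bound on a sesquilinear form, and it requires that the sesquilinear refinement $Q(\psi_1,\psi_2)$ be part of the data of a quadratic time-frequency distribution.
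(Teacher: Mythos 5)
Your argument is correct and coincides with the proof of \cite[Thm.~4.5.1]{Grochenig:2001}, which the paper cites rather than reproves: the bound at the origin produces a bounded operator $T$ with $Q(\psi_1,\psi_2)(0)=\inner{T\psi_1}{\psi_2}$, covariance propagates this to $Q(\psi_1,\psi_2)(z)=\inner{\alpha_z(T)\psi_1}{\psi_2}$, and the Weyl correspondence on $\tempdist$ together with $\alpha_z(L_a)=L_{T_za}$ identifies the kernel as $\phi=\check{a}$. The technical caveats you flag (bijectivity of the Weyl transform onto continuous maps $\mathcal{S}(\Rd)\to\mathcal{S}^{\prime}(\Rd)$, the distributional reading of $W(\psi,\psi)\ast\phi$, and the sesquilinear refinement being part of the data) are exactly the right ones and are handled the same way in the cited source.
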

\subsection{Concepts from operator theory}
\subsubsection{The Schatten classes of operators}
\label{sec:schatten}
 In classical harmonic analysis one often studies the $L^p$-spaces of functions, and we will similarly need to introduce classes of operators in $\bo$ with different properties. To introduce these classes, we need the \textit{singular value decomposition} of compact operators on $\HS$  \cite{Reed:1980}.
\begin{prop}
\label{prop:singval}
	Let $S$ be a compact operator on $\HS$. There exist two orthonormal sets $\{\psi_n\}_{n\in \mathbb{N}}$ and $\{\phi_n\}_{n\in \mathbb{N}}$ in $\HS$ and a sequence $\{s_n(S)\}_{n\in \mathbb{N}}$ of positive numbers with $s_n(S) \to 0$, such that $S$ may be expressed as 
\begin{equation*}
S = \sum\limits_{n \in \mathbb{N}} s_n(S) \psi_n\otimes \phi_n, 
\end{equation*}
with convergence in the operator norm.
The numbers $\{s_n(S)\}_{n\in \mathbb{N}}$ are called the \textit{singular values} of $S$, and are the eigenvalues of the operator $|S|$. 
\end{prop}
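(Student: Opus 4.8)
The plan is to derive the singular value decomposition from the spectral theorem for compact self-adjoint operators, which I take as known \cite{Reed:1980}. First I would form the operator $S^*S$ on $\HS$. Since $S$ is compact and $S^*$ is bounded, $S^*S$ is compact; it is moreover self-adjoint and positive because $\inner{S^*S\zeta}{\zeta}=\|S\zeta\|^2\geq 0$ for all $\zeta\in\HS$. The spectral theorem then furnishes an orthonormal set $\{\phi_n\}_{n\in\N}$ of eigenvectors of $S^*S$ corresponding to its strictly positive eigenvalues $\{\lambda_n\}_{n\in\N}$, which may be arranged in nonincreasing order and satisfy $\lambda_n\to 0$. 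Setting $s_n(S):=\sqrt{\lambda_n}$, these are precisely the strictly positive eigenvalues of $|S|=(S^*S)^{1/2}$ on the same eigenvectors $\phi_n$, by the functional calculus; this already yields the final assertion of the proposition.

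Next I would produce the second orthonormal system. Define
\[
\psi_n:=\frac{1}{s_n(S)}S\phi_n.
\]
The key computation is orthonormality: using $\inner{S\phi_n}{S\phi_m}=\inner{S^*S\phi_n}{\phi_m}=\lambda_n\inner{\phi_n}{\phi_m}$, one obtains $\inner{\psi_n}{\psi_m}=\frac{\lambda_n}{s_n(S)s_m(S)}\delta_{nm}=\delta_{nm}$, so that $\{\psi_n\}_{n\in\N}$ is indeed orthonormal.

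Finally I would verify the representation. The essential observation is that $\ker(S)=\ker(S^*S)$, again because $\|S\zeta\|^2=\inner{S^*S\zeta}{\zeta}$; hence every $\zeta\in\HS$ splits as $\zeta=\sum_n\inner{\zeta}{\phi_n}\phi_n+\zeta_0$ with $S\zeta_0=0$. Applying $S$ gives $S\zeta=\sum_n s_n(S)\inner{\zeta}{\phi_n}\psi_n$, which is exactly $\big(\sum_n s_n(S)\,\psi_n\otimes\phi_n\big)\zeta$. For operator-norm convergence of the partial sums $S_N=\sum_{n\leq N}s_n(S)\,\psi_n\otimes\phi_n$, I would use orthonormality to get $\|(S-S_N)\zeta\|^2=\sum_{n>N}s_n(S)^2|\inner{\zeta}{\phi_n}|^2\leq(\sup_{n>N}s_n(S))^2\|\zeta\|^2$, so $\|S-S_N\|\leq\sup_{n>N}s_n(S)\to 0$ since the $s_n(S)$ are nonincreasing with limit $0$. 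The only genuine input is the spectral theorem; everything after that is the short bookkeeping above, and the main point worth stating carefully is the identity $\ker(S)=\ker(S^*S)$, which guarantees that the $\{\phi_n\}$ span enough of $\HS$ for the series to recover all of $S$.
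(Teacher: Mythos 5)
Your argument is correct and complete: it is the standard derivation of the singular value decomposition from the spectral theorem applied to the compact, positive, self-adjoint operator $S^*S$, and you have taken care of the two points most often glossed over, namely the identity $\ker(S)=\ker(S^*S)$ (which guarantees that the eigenvectors $\{\phi_n\}$ capture all of $S$) and the operator-norm bound $\|S-S_N\|\leq \sup_{n>N}s_n(S)$ via Bessel's inequality. The paper gives no proof of this proposition --- it is quoted directly from \cite{Reed:1980} --- and your proof is exactly the textbook argument found there, so there is nothing further to reconcile.
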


For $1\leq p<\infty$ we define the \textit{Schatten class} $\SC^p$ of operators by $$\SC^p=\lbrace T\text{ compact}: (s_n(T))_{n\in \mathbb{N}} \in \ell^p\rbrace.$$ 
	We will also write $\SC^{\infty}=\bo$ with $\|\cdot\|_{\SC^\infty}$ given by the operator norm to simplify the statement of some results. The Schatten class $\SC^p$ becomes a Banach space under pointwise addition and scalar multiplication in the norm $\|S\|_{\SC^p}=\left(\sum\limits_{n\in \mathbb{N}} s_n(S)^p\right)^{1/p}$. Since these norms are defined in terms of $\ell^p$-norms of sequences, we get that $\|\cdot\|_{\bo}\leq \|\cdot\|_p\leq \|\cdot\|_1$ for $1\leq p \leq \infty$. Furthermore, the spaces $\SC^p$ are ideals in $\bo$, meaning that $A\in \bo$ and $T\in \SC^p$ implies that $AT,TA \in \SC^p$  \cite[Thm. 2.7]{Simon:2010wc}. 

\subsubsection{The trace and trace class operators}
Recall that an operator $S\in \bo$ is \textit{positive} if $\inner{S\psi}{\psi}\geq 0$ for any $\psi \in \HS$. For a positive operator $S\in \bo$, the \textit{trace} of $S$ is defined to be
 \begin{equation} \label{eq:defoftrace}
  \tr(S)=\sum_{n\in \N} \inner{Se_n}{e_n},
\end{equation}
where $\{e_n\}_{n\in \N}$ is an orthonormal basis for $\HS$. This definition is independent of the orthonormal basis used, and the trace is linear and satisfies $\tr(ST)=\tr(TS)$ \cite{Reed:1980}. However, the expression in \eqref{eq:defoftrace} may well be infinite, and is not well-defined for a general non-positive operator $S$. If $S\in \tco$, then $\tr(S)$ is well-defined and a simple calculation shows that 
\begin{equation*}
  \tr(S)=\sum_{n\in \N} s_n(S),
\end{equation*}
where the sum of singular values converges by the definition of $\tco$. For this reason the class $\tco$ is often referred to as \textit{trace class operators}. By a celebrated theorem due to Lidskii, the trace $\tr(S)$ of $S\in \tco$ equals the sum $\sum_{i=1}^{\infty}\lambda_i$ of the eigenvalues $\{\lambda_i\}_{i\in \N}$ of $S$, where the eigenvalues are counted with algebraic multiplicity \cite{Simon:2010wc}. 

Using the trace we may state the duality relations of the Schatten $p$-classes \cite[Thm 2.8 and 3.2]{Simon:2010wc}. 
\begin{lem}
	Let $1\leq p<\infty$, and let $q$ be the number determined by $\frac{1}{p}+\frac{1}{q}=1$. The dual space of $\SC^p$ is $\SC^{q}$, and the duality may be given by 
		\begin{equation*}
  		\inner{T}{S}_{\SC^q,\SC^p}=\tr(TS) 
		\end{equation*}
		for $S \in \SC^p$ and $T\in \SC^{q}$.
\end{lem}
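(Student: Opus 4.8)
The plan is to exhibit an isometric isomorphism $\Phi\colon \SC^q \to (\SC^p)^*$ given by $\Phi(T)=\tr(T\,\cdot\,)$ and to verify in turn that it is well-defined, isometric, and surjective. First I would establish that the pairing is well-defined and bounded. The key ingredient is a Hölder inequality for the Schatten classes: for $S\in\SC^p$ and $T\in\SC^q$ one has $TS\in\SC^1$ with $\|TS\|_{\SC^1}\le \|T\|_{\SC^q}\|S\|_{\SC^p}$. This can be obtained from the submultiplicativity of singular values, $\sum_n s_n(TS)\le \sum_n s_n(T)s_n(S)$, combined with the classical Hölder inequality for the sequences $(s_n(T))\in\ell^q$ and $(s_n(S))\in\ell^p$. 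Consequently $\tr(TS)$ is well defined with $|\tr(TS)|\le \|T\|_{\SC^q}\|S\|_{\SC^p}$, so each $T\in\SC^q$ induces a bounded functional with $\|\Phi(T)\|\le \|T\|_{\SC^q}$.

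Next I would show that $\Phi$ is isometric by constructing near-optimal test operators from the singular value decomposition $T=\sum_n s_n(T)\,\psi_n\otimes\phi_n$ of Proposition \ref{prop:singval}. For $1<p<\infty$, taking $S=\sum_n c_n\,\phi_n\otimes\psi_n$ with $c_n\ge 0$ gives $TS=\sum_n s_n(T)c_n\,\psi_n\otimes\psi_n$, hence $\tr(TS)=\sum_n s_n(T)c_n$, while $\|S\|_{\SC^p}=\|(c_n)\|_{\ell^p}$; optimizing over $\|(c_n)\|_{\ell^p}=1$ recovers $\|(s_n(T))\|_{\ell^q}=\|T\|_{\SC^q}$ from the equality case of classical Hölder. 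For $p=1$ the single rank-one operator $S=\phi_1\otimes\psi_1$ already yields $\tr(TS)=s_1(T)=\|T\|_{\bo}$. Together with the previous paragraph this gives $\|\Phi(T)\|=\|T\|_{\SC^q}$.

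The main work is surjectivity. Given $\ell\in(\SC^p)^*$, the map $(\phi,\psi)\mapsto \ell(\phi\otimes\psi)$ is linear in $\phi$, conjugate-linear in $\psi$, and bounded because $\|\phi\otimes\psi\|_{\SC^p}=\|\phi\|\,\|\psi\|$; the Riesz representation of bounded sesquilinear forms then produces a bounded operator $T$ with $\inner{T\phi}{\psi}=\ell(\phi\otimes\psi)$ and $\|T\|_{\bo}\le\|\ell\|$. Since $\tr(T(\phi\otimes\psi))=\inner{T\phi}{\psi}$, the identity $\ell=\Phi(T)$ holds on rank-one, and by linearity on all finite-rank, operators. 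For $p=1$ this suffices, as $T\in\bo=\SC^\infty$ and finite-rank operators are dense in $\SC^1$.

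I expect the delicate point to be, for $1<p<\infty$, upgrading $T\in\bo$ to $T\in\SC^q$ with the correct norm bound. Writing the polar decomposition $T=U|T|$ with $|T|=\sum_n s_n(T)\,e_n\otimes e_n$ and $f_n=Ue_n$, I would test $\ell$ against the finite-rank operators $S_N=\sum_{n\le N} s_n(T)^{q-1}\,e_n\otimes f_n$; using $\ell(e_n\otimes f_n)=\inner{Te_n}{f_n}=s_n(T)$ and the bound $|\ell(S_N)|\le\|\ell\|\,\|S_N\|_{\SC^p}$ yields $\big(\sum_{n\le N}s_n(T)^q\big)^{1/q}\le\|\ell\|$ for every $N$, whence $T\in\SC^q$ with $\|T\|_{\SC^q}\le\|\ell\|$. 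Density of finite-rank operators in $\SC^p$ then extends $\ell=\Phi(T)$ from finite rank to all of $\SC^p$. The contrast between the two cases --- that for $p=1$ the dual is all of $\bo$ rather than only the compact operators, reflecting that finite-rank operators fail to be dense in $\SC^\infty$ --- is precisely the reason $\SC^\infty$ is defined as $\bo$.
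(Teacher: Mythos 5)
The paper does not actually prove this lemma; it quotes it from Simon's monograph on trace ideals, and your argument is in substance the standard proof given there: Schatten--H\"older via submultiplicativity of singular values, isometry via the equality case of the classical H\"older inequality, and surjectivity by representing the bounded sesquilinear form $(\phi,\psi)\mapsto \ell(\phi\otimes\psi)$ and testing against truncations built from the polar decomposition. The first three steps are correct as written.

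The one genuine gap is in the last step. The operator $T$ produced by the Riesz representation is a priori only \emph{bounded}, so you are not entitled to write $|T|=\sum_n s_n(T)\,e_n\otimes e_n$: that discrete form of the polar decomposition requires $T$ to be compact, which is not yet known for $1<p<\infty$ (and is genuinely false for $p=1$, which is why $\bo$ rather than the compacts appears there). You must first prove compactness, for instance as follows: if the spectral projection $P_\epsilon=\chi_{[\epsilon,\infty)}(|T|)$ had infinite rank for some $\epsilon>0$, pick orthonormal $e_1,\dots,e_N$ in its range and set $S_N=\sum_{n\le N}e_n\otimes Ue_n$; then $\ell(S_N)=\sum_{n\le N}\inner{|T|e_n}{e_n}\ge N\epsilon$ while $\|S_N\|_{\SC^p}=N^{1/p}$, forcing $N^{1/q}\le \|\ell\|/\epsilon$ and bounding $N$. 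Hence each $P_\epsilon$ has finite rank, $|T|$ and therefore $T$ is compact, and your telescoping argument with $S_N=\sum_{n\le N}s_n(T)^{q-1}e_n\otimes f_n$ then goes through verbatim. A cosmetic instance of the same oversight appears in your isometry argument for $p=1$: a general $T\in\bo$ has no singular value decomposition, so instead of $S=\phi_1\otimes\psi_1$ take $S=\phi\otimes\psi$ for unit vectors with $|\inner{T\phi}{\psi}|$ arbitrarily close to $\|T\|_{\bo}$, using $\tr\bigl(T(\phi\otimes\psi)\bigr)=\inner{T\phi}{\psi}$.
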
 
Another well-known Schatten class is $\SC^2$, known as the \textit{Hilbert-Schmidt operators}. $\SC^2$ is a Hilbert space under the inner product $\inner{S}{T}_{\SC^2}:=\tr(ST^*)$ for $S,T\in \SC^2$.
\begin{rem}
	The Schatten classes behave analogously to the $L^p$-spaces of functions -- the duality relations are the same, and both $L^1(\Rdd)$ and $\tco$ have a natural linear functional given by the integral and trace, respectively. The intuition that $L^p$ corresponds to $\SC^p$ will often be useful, and is strengthened by the convolutions defined in section \ref{sec:werner}.
\end{rem}

\subsubsection{Vector-valued integration} \label{sec:weakint}
We will need to integrate operator-valued functions $G:\Rdd \to \bo$ of the form $G(z)=g(z)F(z)$, where $g\in L^1(\Rdd)$ and $F:\Rdd\to \bo$ is measurable, bounded and strongly continuous. The operator-valued integral $\iint_{\Rdd} g(z)F(z) \ dz \in \bo$ is defined in a weak and pointwise sense: for any $\psi \in \HS$ we define $\left(\iint_{\Rdd} g(z)F(z) \ dz\right)\psi$ by
\begin{equation*}
  \inner{\left(\iint_{\Rdd} g(z)F(z) \ dz \right) \psi}{\phi}=\iint_{\Rdd} g(z)\inner{F(z)\psi}{\phi} \ dz
\end{equation*}
   for any $\phi \in \HS$. This defines an operator $\iint_{\Rdd} g(z)F(z) \ dz$, and we get the norm estimate $\|\iint_{\Rdd} g(z) F(z) \ dz\|_{\bo}\leq \|g\|_{L^1} \sup_{z\in \Rdd} \|F(z)\|_{\bo}$ \cite{Luef:2017vs}.

\subsection{Localization operators, STFT-filters and multiwindow STFT-filters} \label{sec:filters}
Given a function $f$ on $\R^{2d}$ and $\varphi_1, \varphi_2 \in L^2(\R^{d})$ , we define the \textit{localization operator} (or STFT-filter \cite{Kozek:1997}) $\mathcal{A}_f^{\varphi_1,\varphi_2}$ \textit{with mask $f$ and windows $\varphi_1,\varphi_2$} by
\begin{equation*}
  \mathcal{A}_f^{\varphi_1, \varphi_2}\psi = \iint_{\R^{2d}}f(z) V_{\varphi_1}\psi(z) \pi(z) \varphi_2 \ dz
\end{equation*}
for $\psi\in L^2(\R^d)$,  where the integral is interpreted in the weak sense discussed above. We will in particular be interested in the case where $\varphi_1=\varphi_2$ and $f=\chi_\Omega$ is the characteristic function of some measurable subset $\Omega\subset \R^{2d}$, and we will write $\mathcal{A}_\Omega^\varphi:=\mathcal{A}^{\varphi,\varphi}_{\chi_\Omega}$ in this case. 

We will follow Kozek \cite{Kozek:1997} and call any operator $H$ of the form
\begin{equation*}\label{eq:mw}
 H= \sum_{n} \lambda_n \mathcal{A}_f^{\varphi_{n,1},\varphi_{n,2}}
\end{equation*}
a \textit{multiwindow STFT-filter}, where $\{ \lambda_n \}_{n\in \N}$ is a sequence of complex numbers and $\{\varphi_{n,1}\}_{n\in \N}$ and $\{\varphi_{n,2}\}_{n\in \N}$ are sequences of functions in $L^2(\R^d)$. Hence a multiwindow STFT-filter is a possibly infinite linear combination of localization operators with common mask $f$. We will return to the question of convergence of the sum in equation \eqref{eq:mw} in section \ref{sec:rigour}. For further information on filters and their use in the engineering literature the reader may consult, for instance, \cite{Hlawatsch:1992, Kozek:1997, Hlawatsch:1994,Matz:2002}.

\subsection{Convolutions of operators and functions} \label{sec:werner}
This section introduces the theory of convolutions of operators and functions due to Werner \cite{Werner:1984}. In order to introduce these convolution operations, we will first need to define a shift for operators. For $z\in \R^{2d}$ and $A\in \bo$, we define the operator $\alpha_z(A)$ by
\begin{equation*}
  \alpha_z(A)=\pi(z)A\pi(z)^*.
\end{equation*}
It is easily confirmed that $\alpha_z\alpha_{z'}=\alpha_{z+z'}$, and we will informally think of $\alpha$ as a shift or translation of operators. The interpretation of $\alpha$ as a shift of operators has also been remarked in the signal processing literature by Kozek \cite{Kozek:1992,Kozek:1997}.

Similarly we define the analogue of the involution $f\mapsto \check{f}$ of a function, for an operator $A\in \bo$ by
\begin{equation*}
  \check{A}=PAP,
\end{equation*}
where $P$ is the parity operator $P\psi(x)=\psi(-x)$ for $\psi \in L^2(\R^d)$. 
The intuition that $\alpha$ is a shift of operators is supported by considering the Weyl symbol \cite{Luef:2017vs,Kozek:1992}.
\begin{lem} \label{lem:shiftofweyl}
Let $f\in L^1(\R^{2d})$, and let $L_f$ be the Weyl transform of $f$. 
\begin{itemize}
	\item $\alpha_z(L_f)=L_{T_zf}$ for $z\in \R^{2d}$.
	\item $\check{L_f}=L_{\check{f}}$.
\end{itemize}	
	
\end{lem}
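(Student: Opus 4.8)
The plan is to verify both identities by testing against matrix elements $\inner{\cdot\,\psi}{\phi}$ for $\psi,\phi\in\mathcal{S}(\Rd)$ and transferring the geometric operation from the functions onto the symbol through the defining duality $\inner{L_f\psi}{\phi}_{\mathcal{S}^\prime,\mathcal{S}}=\inner{f}{W(\psi,\phi)}_{\mathcal{S}^\prime,\mathcal{S}}$. The two facts that drive everything are the covariance and the parity behaviour of the cross-Wigner distribution, namely $W(\pi(z)\psi,\pi(z)\phi)=T_zW(\psi,\phi)$ and $W(P\psi,P\phi)=\check{W(\psi,\phi)}$, both of which follow directly from the integral definition of $W$ by a change of variables. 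I would record these two as a preliminary computation.

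For the first bullet, I would start from $\inner{\alpha_z(L_f)\psi}{\phi}=\inner{L_f\pi(z)^*\psi}{\pi(z)^*\phi}$ and apply the definition of $L_f$ to obtain $\inner{f}{W(\pi(z)^*\psi,\pi(z)^*\phi)}_{\mathcal{S}^\prime,\mathcal{S}}$. Writing $\pi(z)^*=c(z)\pi(-z)$ for a unimodular constant $c(z)$, the phase factor cancels because $W$ is linear in its first slot and antilinear in its second, so $W(\pi(z)^*\psi,\pi(z)^*\phi)=W(\pi(-z)\psi,\pi(-z)\phi)=T_{-z}W(\psi,\phi)$ by covariance. It then remains to move the translation onto the symbol: since for $f\in L^1(\Rdd)$ the bracket is an honest integral, a shift of the integration variable gives $\inner{f}{T_{-z}W(\psi,\phi)}_{\mathcal{S}^\prime,\mathcal{S}}=\inner{T_zf}{W(\psi,\phi)}_{\mathcal{S}^\prime,\mathcal{S}}=\inner{L_{T_zf}\psi}{\phi}$. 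Since $\psi,\phi$ are arbitrary, this yields $\alpha_z(L_f)=L_{T_zf}$.

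The second bullet is entirely analogous. Using $P^*=P$ one has $\inner{PL_fP\psi}{\phi}=\inner{f}{W(P\psi,P\phi)}_{\mathcal{S}^\prime,\mathcal{S}}=\inner{f}{\check{W(\psi,\phi)}}_{\mathcal{S}^\prime,\mathcal{S}}$ by the parity identity for $W$, and the change of variables $w\mapsto-w$ in the integral transfers the reflection onto $f$, giving $\inner{\check f}{W(\psi,\phi)}_{\mathcal{S}^\prime,\mathcal{S}}=\inner{L_{\check f}\psi}{\phi}$ and hence $\check{L_f}=L_{\check f}$.

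I do not expect a serious obstacle; the only points requiring care are the bookkeeping of the phase factor $c(z)$, which disappears precisely because of the sesquilinearity of $W$, and the sign conventions in the covariance identity, so that the translation landing on $f$ is $T_z$ rather than $T_{-z}$. Strictly, the identities are first established for $f\in L^1(\Rdd)$, where the duality bracket is a convergent integral and the changes of variables are immediate.
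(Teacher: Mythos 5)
Your proof is correct: the covariance identity $W(\pi(z)\psi,\pi(z)\phi)=T_zW(\psi,\phi)$ and the parity identity $W(P\psi,P\phi)=\check{W(\psi,\phi)}$ both follow from the integral formula for $W$ by a change of variables, the phase $c(z)$ in $\pi(z)^*=c(z)\pi(-z)$ cancels by sesquilinearity, and moving the translation (resp.\ reflection) from $W(\psi,\phi)$ onto $f$ in the pairing is legitimate for $f\in L^1(\R^{2d})$. The paper states this lemma without proof, citing \cite{Luef:2017vs,Kozek:1992}, and your argument via the covariance and parity behaviour of the cross-Wigner distribution is essentially the standard one given there.
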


Using $\alpha$, Werner defined a convolution operation between functions and operators \cite{Werner:1984}. If $f \in L^1(\R^{2d})$ and $S \in \tco$ we define the \textit{operator} $f\star S$ by
\begin{equation*}
  f\star S := S\star f := \iint_{\R^{2d}}f(y)\alpha_y(S) \ dy
\end{equation*}
where the integral is interpreted as in section \ref{sec:weakint}. Then $f\star S \in \tco$ and $\|f\star S \|_{\tco}\leq \|f \|_{L^1}\|S\|_{\tco}$ \cite[Prop. 2.5]{Luef:2017vs}.

For two operators $S,T \in \tco$, Werner defined the \textit{function} $S\star T$ by 
\begin{equation*}
  S \star T(z) = \tr(S\alpha_z (\check{T})) 
\end{equation*}
for $z\in \R^{2d}$. 

\begin{rem}
	The notation $\star$ may therefore denote either the convolution of two functions or the convolution of an operator with a function. The correct interpretation will be clear from the context.
\end{rem}
The following result shows that $S\star T\in L^1(\R^{2d})$ for $S,T\in \tco$ and provides an important formula for its integral \cite[Lem. 3.1]{Werner:1984}. In the simplest case where $S$ and $T$ are rank-one operators, this formula is the so-called Moyal identity for the STFT\cite[p. 57]{Folland:1989}.
\begin{lem} 
\label{lem:werner}
Let $S,T \in \tco$. The function $z \mapsto \tr(S\alpha_zT)$ for $z\in \R^{2d}$ is integrable and $\|\tr(S\alpha_zT)\|_{L^1} \leq \|S\|_{\tco} \|T\|_{\tco}$.

Furthermore,
\begin{equation*}
	\iint_{\R^{2d}} \tr(S\alpha_zT) \ dz = \tr(S)\tr(T).
\end{equation*}
\end{lem}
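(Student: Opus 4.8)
The plan is to reduce everything to the rank-one case, where the integrand becomes a product of two short-time Fourier transforms and the identity is precisely Moyal's orthogonality relation, and then to extend to arbitrary trace-class $S,T$ by means of the singular value decomposition of Proposition \ref{prop:singval}.

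First I would treat rank-one operators. Writing $S=a\otimes b$ and $T=c\otimes d$ with $a,b,c,d\in\HS$, a direct computation using $\alpha_z(c\otimes d)=\pi(z)c\otimes \pi(z)d$ together with the composition rule $(a\otimes b)(u\otimes v)=\inner{u}{b}(a\otimes v)$ gives
\begin{equation*}
\tr\big(S\alpha_z(T)\big)=\inner{a}{\pi(z)d}\,\overline{\inner{b}{\pi(z)c}}=V_d a(z)\,\overline{V_c b(z)}.
\end{equation*}
The Cauchy--Schwarz inequality together with Moyal's identity $\|V_g f\|_{L^2}=\|f\|_2\|g\|_2$ then yields $\|V_d a\cdot\overline{V_c b}\|_{L^1}\leq \|a\|_2\|b\|_2\|c\|_2\|d\|_2$, while the polarized Moyal identity gives $\iint V_d a(z)\overline{V_c b(z)}\,dz=\inner{a}{b}\inner{c}{d}=\tr(S)\tr(T)$. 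This settles both assertions for rank-one operators.

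For the general case I would insert the singular value decompositions $S=\sum_n s_n\,\psi_n\otimes\phi_n$ and $T=\sum_m t_m\,\xi_m\otimes\eta_m$, where $\{\psi_n\},\{\phi_n\},\{\xi_m\},\{\eta_m\}$ are orthonormal and $\sum_n s_n=\|S\|_{\tco}$, $\sum_m t_m=\|T\|_{\tco}$. Since $\alpha_z$ is conjugation by a unitary it is an isometry of $\tco$, and multiplication by the fixed trace-class operator $S$ followed by the trace is continuous on $\tco$; hence for each fixed $z$ one obtains the pointwise expansion
\begin{equation*}
\tr\big(S\alpha_z(T)\big)=\sum_{n,m}s_n t_m\,V_{\eta_m}\psi_n(z)\,\overline{V_{\xi_m}\phi_n(z)}.
\end{equation*}
By the rank-one estimate each summand has $L^1$-norm at most $s_n t_m$, and $\sum_{n,m}s_n t_m=\|S\|_{\tco}\|T\|_{\tco}<\infty$, so the double series converges absolutely in $L^1(\Rdd)$. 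This simultaneously proves integrability with the stated bound and licenses the interchange of summation and integration. Integrating term by term and applying the rank-one identity factors the resulting double sum as $\big(\sum_n s_n\inner{\psi_n}{\phi_n}\big)\big(\sum_m t_m\inner{\xi_m}{\eta_m}\big)=\tr(S)\tr(T)$.

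The main obstacle is justifying the pointwise double-series expansion and the ensuing Fubini--Tonelli interchange rigorously: one must verify that the singular value decompositions converge not merely in operator norm (as stated in Proposition \ref{prop:singval}) but in trace norm, so that $z\mapsto S\alpha_z(T)$ genuinely equals the trace-norm-convergent double series before the trace is applied. Once absolute $L^1$-summability of the terms is in hand, the remainder is bookkeeping.
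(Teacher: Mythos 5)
Your proof is correct. Note that the paper does not actually prove this lemma --- it is imported verbatim from Werner's 1984 paper (cited as Lemma 3.1 there) --- so there is no in-paper argument to compare against; but your route (reduce to rank-one operators, where the integrand is $V_d a\,\overline{V_c b}$ and the integral identity is exactly Moyal's orthogonality relation, then sum over the singular value decomposition) is precisely the one the surrounding text gestures at when it remarks that the rank-one case ``is the so-called Moyal identity.'' The one issue you flag as the ``main obstacle'' is in fact immediate: the tail $\sum_{n>N}s_n\,\psi_n\otimes\phi_n$ of a singular value decomposition is itself in singular-value form, so its trace norm equals $\sum_{n>N}s_n\to 0$; hence the decomposition converges in $\tco$, and since $|\tr(SA)|\leq\|S\|_{\bo}\|A\|_{\tco}$ and $\alpha_z$ is a trace-norm isometry, the pointwise double-series expansion and the subsequent term-by-term integration (justified by the absolute $L^1$-summability $\sum_{n,m}s_nt_m<\infty$ you establish) go through without further work.
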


The convolutions can be defined on other $L^p$-spaces and Schatten $p$-classes by duality \cite{Luef:2017vs,Werner:1984}. As an important example, the convolution $f\star S\in \bo$ for $f\in L^{\infty}(\Rdd)$ and $S\in \tco$ is defined by the relation
\begin{align} \label{eq:convolutionsofbounded}
  &\inner{f\star S}{T}_{\bo,\tco}=\inner{f}{\check{S}\star T}_{L^{\infty},L^1} && \text{for any $T\in \tco$},
\end{align}
 By writing these dualities explicitly, the definition becomes
 \begin{align} \label{eq:convolutionsofboundedexplicit}
  &\tr((f\star S)T)=\iint_{\Rdd} f(z) (\check{S}\star T)(z) \ dz && \text{for any $T\in \tco$}.
\end{align}
 
  When extended to other functions and operator spaces, the convolutions satisfy a version of Young's inequality.

\begin{prop} \label{prop:convschatten}
	Let $1\leq p,q,r \leq \infty$ be such that $\frac{1}{p}+\frac{1}{q}=1+\frac{1}{r}$. If $f\in L^p(\R^{2d}), S \in \SC^p$ and $T\in \SC^q$, then the following convolutions may be defined and satisfy the norm estimates
	\begin{align*}
  		\|f\star T\|_{\SC^r} &\leq \|f\|_{L^p} \|T\|_{\SC^q}, \\
  		\|S\star T\|_{L^r} &\leq \|S\|_{\SC^p} \|T\|_{\SC^q}.  		
	\end{align*}
	\end{prop}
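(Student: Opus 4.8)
The plan is to treat both inequalities as instances of Young's inequality for Werner's convolutions and to obtain them by complex interpolation from a small number of endpoint (``corner'') estimates. In the coordinates $(1/p,1/q)$, the admissible exponents satisfying $\tfrac{1}{p}+\tfrac{1}{q}=1+\tfrac{1}{r}$ with $0\le 1/p,1/q,1/r\le 1$ fill the triangle with vertices $(1,1)$, $(1,0)$, $(0,1)$, which correspond to the exponent triples $(p,q,r)$ equal to $(1,1,1)$, $(1,\infty,\infty)$ and $(\infty,1,\infty)$. Throughout I would use that conjugation by a unitary preserves singular values, so that $\|\alpha_z(T)\|_{\SC^p}=\|T\|_{\SC^p}$ and $\|\check{T}\|_{\SC^p}=\|T\|_{\SC^p}$ for every $z$ and every $p$, since $\pi(z)$ and $P$ are unitary.

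First I would record the three corner estimates for $f\star T$. The vertex $(1,1,1)$ is exactly the bound $\|f\star T\|_{\tco}\le\|f\|_{L^1}\|T\|_{\tco}$ stated in Section~\ref{sec:werner} (see \cite{Luef:2017vs}). For the vertex $(1,\infty,\infty)$, with $f\in L^1$ and $T\in\bo$, I would use the weak operator-valued integral of Section~\ref{sec:weakint}: since $z\mapsto\alpha_z(T)$ is bounded and strongly continuous with $\|\alpha_z(T)\|_{\bo}=\|T\|_{\bo}$, the norm estimate there gives $\|f\star T\|_{\bo}\le\|f\|_{L^1}\|T\|_{\bo}$. For the vertex $(\infty,1,\infty)$, with $f\in L^\infty$ and $T\in\tco$, I would use the dual definition \eqref{eq:convolutionsofbounded} together with Lemma~\ref{lem:werner}: for $R\in\tco$ one has $|\tr((f\star T)R)|=|\inner{f}{\check{T}\star R}_{L^\infty,L^1}|\le\|f\|_{L^\infty}\|\check{T}\star R\|_{L^1}\le\|f\|_{L^\infty}\|T\|_{\tco}\|R\|_{\tco}$, and taking the supremum over $\|R\|_{\tco}\le1$ yields $\|f\star T\|_{\bo}\le\|f\|_{L^\infty}\|T\|_{\tco}$ via $\bo=(\tco)^*$. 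The three corners for $S\star T$ are more direct: $(1,1,1)$ is Lemma~\ref{lem:werner} applied to $S$ and $\check{T}$, while $(1,\infty,\infty)$ and $(\infty,1,\infty)$ both follow from the pointwise bound $|S\star T(z)|=|\tr(S\alpha_z(\check{T}))|\le\|S\|_{\SC^p}\|\alpha_z(\check{T})\|_{\SC^{p'}}$ applied with $(p,p')=(1,\infty)$ and $(p,p')=(\infty,1)$.

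Finally I would fill the triangle by complex interpolation, using the identity $[L^{p_0},L^{p_1}]_\theta=L^p$ and the corresponding interpolation of the Schatten classes $[\SC^{p_0},\SC^{p_1}]_\theta=\SC^r$ (cf. \cite{Simon:2010wc}). Since the two arguments of the bilinear map $(f,T)\mapsto f\star T$ (respectively $(S,T)\mapsto S\star T$) lie in different spaces at different corners, this is a bilinear rather than a single-variable interpolation, which I would carry out in two successive steps: first interpolating the two vertices that share the target $\SC^\infty$ (resp.\ $L^\infty$) to obtain the entire edge $r=\infty$, and then interpolating that edge against the vertex $(1,1,1)$ to reach an arbitrary interior point. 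I expect the main obstacle to be the bookkeeping for this bilinear interpolation rather than any individual inequality: one must check that the convolutions, initially defined on a common dense subspace (for instance $L^1\cap L^\infty$ paired with finite-rank operators), are consistent across the endpoint spaces, so that the interpolation theorem simultaneously \emph{defines} $f\star T$ and $S\star T$ on the intermediate spaces and delivers the asserted bounds. The cases in which some exponent equals $\infty$ coincide exactly with the three corners and the edge $r=\infty$, and are therefore already covered directly.
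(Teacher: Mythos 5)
The paper does not prove Proposition \ref{prop:convschatten} at all: it is quoted as a preliminary from \cite{Werner:1984,Luef:2017vs}, and the argument given there is exactly the one you outline — the three corner estimates ($L^1\times\SC^1\to\SC^1$ from the weak integral and Lemma \ref{lem:werner}, and the two $r=\infty$ corners from the operator-norm bound on the weak integral, the duality \eqref{eq:convolutionsofbounded}, and Schatten--H\"older) followed by bilinear complex interpolation in two steps. Your proposal is correct and coincides with the proof in the cited references, including the correct identification of the boundary cases and of the consistency/density issue as the only point requiring care.
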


The convolutions of operators and functions are associative, a fact that is non-trivial since the convolutions between operators and functions can produce both operators and functions as output \cite{Luef:2017vs, Werner:1984}. Commutativity and bilinearity, however, follows straight from the definitions.

Furthermore, the convolutions preserve positivity and identity elements \cite{Skrettingland:2017}.
\begin{prop} \label{prop:positiveandidentity}
	\begin{enumerate}
		\item If $S,T \in \bo$ are positive operators and $f$ is a positive function, then $f\star S$ is a positive operator and $S\star T$ is a positive function.
		\item If $1$ is the constant function $1(z)=1$ for $z\in \Rdd$ and $I$ is the identity operator on $\HS$, then $1\star S=I$ and $I\star S=1$ for every $S\in \tco$. 
	\end{enumerate}
\end{prop}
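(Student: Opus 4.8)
The plan is to establish the four identities one at a time, relying on the weak (pointwise) definition of the operator-valued integral from Section~\ref{sec:weakint}, the fact that conjugation by a unitary preserves positivity, the cyclicity of the trace, and the integral formula of Lemma~\ref{lem:werner}.

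For the positivity of $f\star S$ I would simply test against a single vector. Fixing $\psi\in\HS$ and using the weak definition of the integral,
\begin{equation*}
\inner{(f\star S)\psi}{\psi}=\iint_{\Rdd} f(z)\inner{\alpha_z(S)\psi}{\psi}\,dz=\iint_{\Rdd} f(z)\inner{S\pi(z)^*\psi}{\pi(z)^*\psi}\,dz .
\end{equation*}
Since $S\geq 0$ makes each inner product in the integrand nonnegative and $f\geq 0$ by hypothesis, the integral is nonnegative, so $f\star S\geq 0$. For the positivity of the function $S\star T$ I would note that $P$ and $\pi(z)$ are unitary, whence $T\geq 0$ forces $\check T=PTP\geq 0$ and then $\alpha_z(\check T)=\pi(z)\check T\pi(z)^*\geq 0$ for every $z$. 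It remains to use that the trace of a product of two positive operators is nonnegative: by cyclicity $S\star T(z)=\tr\big(S\,\alpha_z(\check T)\big)=\tr\big(S^{1/2}\alpha_z(\check T)S^{1/2}\big)$, and the argument is manifestly a positive operator, so $S\star T(z)\geq 0$ pointwise.

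The identities in part (2) require more care, because $1\notin L^1(\Rdd)$ and so $1\star S$ cannot be read as an absolutely convergent integral $\iint\alpha_y(S)\,dy$; instead it must be unwound through the duality \eqref{eq:convolutionsofbounded}. For arbitrary $T\in\tco$ this gives
\begin{equation*}
\inner{1\star S}{T}_{\bo,\tco}=\iint_{\Rdd}(\check S\star T)(z)\,dz=\iint_{\Rdd}\tr\big(\check S\,\alpha_z(\check T)\big)\,dz=\tr(\check S)\,\tr(\check T),
\end{equation*}
the last step being Lemma~\ref{lem:werner} applied to the pair $\check S,\check T$. Since $\tr(\check S)=\tr(PSP)=\tr(S)$ and likewise $\tr(\check T)=\tr(T)$, the pairing equals $\tr(S)\,\tr(T)=\inner{\tr(S)I}{T}_{\bo,\tco}$, so that $1\star S=\tr(S)\,I$. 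Dually one computes directly $I\star S(z)=\tr\big(\alpha_z(\check S)\big)=\tr(\check S)=\tr(S)$, i.e. $I\star S$ is the constant function with value $\tr(S)$. Under the normalization $\tr(S)=1$ relevant to mixed-state localization operators these reduce to $1\star S=I$ and $I\star S=1$.

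I expect the only real obstacle to be the bookkeeping in part (2): one must stay inside the $L^\infty$--$\tco$ pairing rather than integrating $\alpha_y(S)$ directly, apply Lemma~\ref{lem:werner} to the \emph{checked} operators, and keep track of the factor $\tr(S)$, which is precisely why the normalization $\tr(S)=1$ enters. The positivity statements, by contrast, are immediate once positivity is passed through the unitary conjugations and the trace.
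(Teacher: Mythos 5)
Your argument is correct. The paper does not prove this proposition itself --- it defers to the cited thesis \cite{Skrettingland:2017} --- so there is no in-text proof to compare against, but what you give is the natural argument: test $f\star S$ against a single vector via the weak definition of the integral, push positivity of $T$ through the unitaries $P$ and $\pi(z)$, use $\tr(S\alpha_z(\check T))=\tr\bigl(S^{1/2}\alpha_z(\check T)S^{1/2}\bigr)\geq 0$, and unwind $1\star S$ through the $L^{\infty}$--$\tco$ duality together with lemma \ref{lem:werner} applied to $\check S,\check T$. Two remarks. First, your observation about part (2) is a genuine (small) correction to the statement as printed: the computation yields $1\star S=\tr(S)\,I$ and $I\star S=\tr(S)\cdot 1$, so the clean identities hold only for $\tr(S)=1$; this is consistent with how the proposition is actually invoked later (e.g.\ in proposition \ref{prop:genlocPOVM}, where $S$ is a positive operator with $\tr(S)=1$), but ``for every $S\in\tco$'' is an overstatement. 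Second, your pointwise-integral proof of the positivity of $f\star S$ presupposes $f\in L^1(\Rdd)$; the paper also needs positivity of $\chi_{\Omega}\star S$ when $\mu(\Omega)=\infty$, where the operator is only defined by duality. That case is an immediate corollary of the half of part (1) you already proved: by equation \eqref{eq:convolutionsofboundedexplicit}, $\inner{(f\star S)\psi}{\psi}=\tr\bigl((f\star S)(\psi\otimes\psi)\bigr)=\iint_{\Rdd}f(z)\,\bigl(\check S\star(\psi\otimes\psi)\bigr)(z)\,dz\geq 0$, the integrand being nonnegative because $\check S\star(\psi\otimes\psi)$ is a positive function. With that one-line supplement the proof covers every use the paper makes of the proposition.
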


The convolutions make the Schatten classes $\SC^p$ into \textit{Banach modules} over $L^1(\Rdd)$ if the module multiplication is defined by $(f,S)\mapsto f\star S$ for $f\in L^1(\Rdd)$ and $S\in \SC^p$, \cite{Luef:2017vs}. By using the Cohen-Hewitt theorem for Banach modules \cite{Graven:1974}, one obtains that any operator in $\SC^p$ for $p<\infty$ can be written as a convolution\cite[Prop. 7.4]{Luef:2017vs}.
\begin{prop} \label{prop:banachmod}
	Given $T\in \SC^p$ for $p<\infty$, there exists $f\in L^1(\R^{2d})$ and $S\in \SC^p$ such that $T=f\star S$.
\end{prop}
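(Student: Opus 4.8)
The plan is to deduce the statement from the Cohen--Hewitt factorization theorem for Banach modules. That theorem asserts that if $A$ is a Banach algebra admitting a bounded approximate identity and $M$ is a left Banach $A$-module which is \emph{essential}, in the sense that the closed linear span of $A\cdot M$ is all of $M$, then in fact $A\cdot M=M$, so that every element of $M$ genuinely factors as a product $a\cdot m$ with $a\in A$, $m\in M$. I would apply this with $A=L^1(\Rdd)$ under ordinary convolution of functions and $M=\SC^p$ under the module action $(f,S)\mapsto f\star S$. The excerpt already supplies everything needed for the formal setup: $\SC^p$ is a Banach module over $L^1(\Rdd)$ for this action, the associativity $(f\ast g)\star S=f\star(g\star S)$ holds, and Proposition~\ref{prop:convschatten} gives the contractive estimate $\|f\star S\|_{\SC^p}\leq\|f\|_{L^1}\|S\|_{\SC^p}$ (its instance with exponents $1$, $p$, $p$). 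That $L^1(\Rdd)$ possesses a bounded approximate identity $\{e_j\}_j$ is standard; one takes the $e_j\geq 0$ with $\iint_{\Rdd}e_j=1$ and supported in shrinking neighbourhoods of the origin.

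The genuinely substantive hypothesis to verify is that $\SC^p$ is essential as an $L^1(\Rdd)$-module, and the cleanest route is to prove the stronger assertion that $e_j\star S\to S$ in $\SC^p$-norm for every $S\in\SC^p$. Since $\iint e_j=1$, one may write $e_j\star S-S=\iint_{\Rdd}e_j(y)\bigl(\alpha_y(S)-S\bigr)\,dy$, and then the vector-valued triangle inequality, together with the fact that each $\alpha_y$ is an isometry of $\SC^p$ (conjugation by the unitary $\pi(y)$ leaves singular values invariant), yields $\|e_j\star S-S\|_{\SC^p}\leq\iint_{\Rdd}e_j(y)\,\|\alpha_y(S)-S\|_{\SC^p}\,dy$. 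Because the $e_j$ concentrate at the origin, this tends to $0$ provided the orbit map $y\mapsto\alpha_y(S)$ is continuous from $\Rdd$ into $\SC^p$ at $y=0$; essentiality follows at once.

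The main obstacle, and the sole point at which the hypothesis $p<\infty$ is used, is establishing this norm-continuity of the orbit map. I would first check it on rank-one operators: from $\alpha_y(\xi\otimes\eta)=\pi(y)\xi\otimes\pi(y)\eta$ and the strong continuity of $y\mapsto\pi(y)\varphi$ on $L^2(\Rd)$, the map $y\mapsto\alpha_y(\xi\otimes\eta)$ is continuous in the trace norm, hence in every $\SC^p$-norm. Finite-rank operators are dense in $\SC^p$ precisely because $p<\infty$ (by the singular value decomposition, Proposition~\ref{prop:singval}), and since each $\alpha_y$ is an $\SC^p$-isometry, a routine $3\varepsilon$-argument upgrades continuity from the dense finite-rank subspace to all of $\SC^p$. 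For $p=\infty$ this step breaks down, as the orbit map need not be operator-norm continuous, which explains the restriction. With essentiality established, Cohen--Hewitt gives $L^1(\Rdd)\star\SC^p=\SC^p$; that is, every $T\in\SC^p$ may be written as $T=f\star S$ with $f\in L^1(\Rdd)$ and $S\in\SC^p$, as claimed.
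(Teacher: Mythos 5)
Your proposal is correct and follows essentially the same route as the paper, which obtains this result by applying the Cohen--Hewitt factorization theorem to the Banach module structure of $\SC^p$ over $L^1(\Rdd)$ (citing \cite{Graven:1974} and \cite[Prop. 7.4]{Luef:2017vs}); your verification of essentiality via norm-continuity of the orbit map $y\mapsto\alpha_y(S)$ on the dense finite-rank operators is exactly the substantive step carried out there, including the correct identification of where $p<\infty$ is needed.
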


\subsection{Localization operators and spectrograms as convolutions} 
In \cite{Luef:2017vs} we established that Werner's convolutions provide a conceptual framework for localization operators, as shown by the following result.

\begin{lem} \label{lem:locasconv}
	Let $f$ be a function on $\R^{2d}$ and $\varphi_1, \varphi_2 \in L^2(\R^d)$ with $\|\varphi_1\|_{L^2}=\|\varphi_2\|_{L^2}=1$. Then the localization operator $\mathcal{A}_f^{\varphi_1,\varphi_2}$ can be expressed as the convolution of the function $f$ and the rank-one operator $\varphi_2\otimes \varphi_1$,
	\begin{equation*}
  \mathcal{A}_f^{\varphi_1,\varphi_2} = f\star (\varphi_2 \otimes \varphi_1).
\end{equation*}
\end{lem}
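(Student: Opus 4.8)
The plan is to reduce the identity to a single algebraic fact about time-frequency shifts and then match the two weak-integral definitions. First I would unwind the operator-valued integrand of the Werner convolution, $f\star(\varphi_2\otimes\varphi_1)=\iint_{\R^{2d}}f(y)\,\alpha_y(\varphi_2\otimes\varphi_1)\,dy$. The central observation is that conjugating a rank-one operator by a time-frequency shift again produces a rank-one operator, namely $\alpha_y(\varphi_2\otimes\varphi_1)=(\pi(y)\varphi_2)\otimes(\pi(y)\varphi_1)$.

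To verify this I would apply both sides to an arbitrary $\zeta\in\HS$. From the definitions of the rank-one operator and of $\alpha_y$ we have $\pi(y)(\varphi_2\otimes\varphi_1)\pi(y)^*\zeta=\inner{\pi(y)^*\zeta}{\varphi_1}\,\pi(y)\varphi_2$, and the adjoint relation $\inner{\pi(y)^*\zeta}{\varphi_1}=\inner{\zeta}{\pi(y)\varphi_1}=V_{\varphi_1}\zeta(y)$ rewrites this as $V_{\varphi_1}\zeta(y)\,\pi(y)\varphi_2$, which is exactly $\bigl((\pi(y)\varphi_2)\otimes(\pi(y)\varphi_1)\bigr)\zeta$.

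With this identity in hand, the remaining step is to insert it into the weak definition of the convolution integral and compare with the weak definition of the localization operator. For $\psi\in\HS$, pairing against an arbitrary $\phi\in\HS$ gives $\inner{(f\star(\varphi_2\otimes\varphi_1))\psi}{\phi}=\iint_{\R^{2d}}f(y)\,\inner{\alpha_y(\varphi_2\otimes\varphi_1)\psi}{\phi}\,dy=\iint_{\R^{2d}}f(y)\,V_{\varphi_1}\psi(y)\,\inner{\pi(y)\varphi_2}{\phi}\,dy$, and this is precisely the weak expression defining $\mathcal{A}_f^{\varphi_1,\varphi_2}\psi$. Since the two operators agree weakly for all $\psi,\phi\in\HS$, they coincide.

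The point requiring care---more than a genuine obstacle---is the bookkeeping of the weak vector-valued integrals: one must check that both integrands are of the admissible form $g(y)F(y)$ with $g\in L^1(\R^{2d})$ and $F$ bounded and strongly continuous, so that the integral of Section~\ref{sec:weakint} is defined and the interchange of integration with the inner product is legitimate. The normalization $\|\varphi_1\|_{L^2}=\|\varphi_2\|_{L^2}=1$ ensures $\varphi_2\otimes\varphi_1\in\tco$ with unit trace-class norm, so that $f\star(\varphi_2\otimes\varphi_1)$ is well defined (as a trace-class operator for $f\in L^1(\R^{2d})$, and via the duality of \eqref{eq:convolutionsofbounded} for $f\in L^\infty(\R^{2d})$), and the weak identifications above are justified.
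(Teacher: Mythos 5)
Your argument is correct and is essentially the standard one: the paper states this lemma without proof, citing \cite{Luef:2017vs}, and the proof there rests on exactly the identity you isolate, $\alpha_y(\varphi_2\otimes\varphi_1)=(\pi(y)\varphi_2)\otimes(\pi(y)\varphi_1)$, followed by comparison of the two weak integrals. Your attention to the admissibility of the integrands for the weak vector-valued integral is appropriate and matches the framework of the paper.
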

Similarly, the convolution of two rank-one operators reduces to a familiar object in the simplest case -- namely the spectrogram.
\begin{lem} \label{lem:spectrogramasconvolution}
		Let $\phi,\psi,\xi,\eta \in \HS$. Then the function $V_{\eta}\phi(z) \overline{V_{\xi}\psi(z)}$ may be expressed as the convolution of two rank-one operators,
\begin{equation*}
(\phi\otimes \psi)\star (\check{\xi} \otimes \check{\eta})(z)= V_{\eta}\phi(z) \overline{V_{\xi}\psi(z)}. 
\end{equation*}
for $z\in \R^{2d}$. In particular, if $\eta=\psi$ and $\psi=\phi$, then $(\phi\otimes \phi)\star (\check{\eta} \otimes \check{\eta})$ is the \textit{spectrogram} $|V_{\eta}\phi|^2$.

\end{lem}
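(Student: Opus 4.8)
The plan is to compute the convolution directly from its definition $S \star T(z) = \tr(S\alpha_z(\check{T}))$, applied to the rank-one operators $S = \phi \otimes \psi$ and $T = \check{\xi} \otimes \check{\eta}$, and then to recognize the resulting scalar as the product of two short-time Fourier transforms. The whole argument is an unwinding of definitions, so I would organize it around three clean reductions.

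First I would simplify the parity factor $\check{T}$. Since $P$ is self-adjoint with $P^2 = I$, evaluating $P(\check{\xi}\otimes\check{\eta})P$ on an arbitrary vector $\zeta$ and using $\inner{P\zeta}{\check{\eta}} = \inner{P\zeta}{P\eta} = \inner{\zeta}{\eta}$ together with $P\check{\xi} = \xi$ shows that the two involutions cancel, giving $\check{T} = P(\check{\xi}\otimes\check{\eta})P = \xi\otimes\eta$. Next I would record how the operator shift acts on a rank-one operator: unwinding $\pi(z)(\xi\otimes\eta)\pi(z)^*$ on a test vector and using $\inner{\pi(z)^*\zeta}{\eta} = \inner{\zeta}{\pi(z)\eta}$ yields the identity $\alpha_z(\xi\otimes\eta) = \pi(z)\xi \otimes \pi(z)\eta$, so that $\alpha_z$ acts on the constituent vectors exactly as one would hope.

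With these two reductions in hand, the convolution becomes $\tr\bigl((\phi\otimes\psi)(\pi(z)\xi\otimes\pi(z)\eta)\bigr)$. I would then multiply the two rank-one operators, which collapses to $\inner{\pi(z)\xi}{\psi}\,(\phi\otimes\pi(z)\eta)$, and take the trace using the elementary formula $\tr(a\otimes b) = \inner{a}{b}$. This produces the scalar $\inner{\pi(z)\xi}{\psi}\inner{\phi}{\pi(z)\eta}$. The final step is purely a matter of matching conventions: by definition $V_{\eta}\phi(z) = \inner{\phi}{\pi(z)\eta}$ and $V_{\xi}\psi(z) = \inner{\psi}{\pi(z)\xi}$, so that $\inner{\pi(z)\xi}{\psi} = \overline{V_{\xi}\psi(z)}$, and the scalar above is exactly $V_{\eta}\phi(z)\,\overline{V_{\xi}\psi(z)}$. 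The special case follows by setting $\psi = \phi$ and $\xi = \eta$, which turns the right-hand side into $|V_{\eta}\phi(z)|^2$.

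I do not expect any genuine obstacle here, since no analytic input is needed beyond the definitions and the trace of a rank-one operator. The only place that demands care is the bookkeeping of the antilinearity of $\inner{\cdot}{\cdot}$ in its second slot, so that the complex conjugate lands on the $V_{\xi}\psi$ factor and not on $V_{\eta}\phi$; getting this backwards is the one way a routine verification of this sort can go astray.
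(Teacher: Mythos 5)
Your proof is correct: the paper gives no proof of this lemma, citing \cite{Luef:2017vs} instead, and your direct unwinding of the definition $S\star T(z)=\tr(S\alpha_z(\check{T}))$ via $\check{T}=\xi\otimes\eta$, $\alpha_z(\xi\otimes\eta)=\pi(z)\xi\otimes\pi(z)\eta$ and $\tr(a\otimes b)=\inner{a}{b}$ is exactly the standard argument used there. The conjugate is placed on the correct factor, so nothing is missing.
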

Note that in the physics literature the spectrogram $|V_{\eta}\phi|^2$ is called the \textit{Husimi function} of $\phi$ when $\eta$ is a Gaussian \cite{Keller:2017}.
\subsection{The Fourier-Wigner transform of operators} \label{sec:fourier}
 For operators $S\in \tco$, the \emph{Fourier-Wigner transform} $\F_WS$ of $S$ is the function given by
	\begin{equation*}
	\F_W S(z)=e^{-\pi i x \cdot \omega}\tr(\pi(-z)S)
	\end{equation*}
	for $z\in \R^{2d}$. 
	In the special case of an operator of rank one, the Fourier-Wigner transform is the ambiguity function \cite[Lemma 6.1]{Luef:2017vs}.
\begin{lem} \label{lem:FWrankone}
If $\varphi_1,\varphi_2 \in \HS$, then
	$\F_W(\varphi_2\otimes \varphi_1)(z)=A(\varphi_2,\varphi_1)(z).$
\end{lem}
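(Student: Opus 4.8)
The plan is to verify the identity $\F_W(\varphi_2\otimes \varphi_1)(z)=A(\varphi_2,\varphi_1)(z)$ by unwinding both sides to an expression involving the STFT and reconciling the phase factors. First I would compute $\tr(\pi(-z)(\varphi_2\otimes\varphi_1))$ directly from the definitions. Recall that $\varphi_2\otimes\varphi_1$ acts by $(\varphi_2\otimes\varphi_1)\zeta=\inner{\zeta}{\varphi_1}\varphi_2$, so the composition $\pi(-z)(\varphi_2\otimes\varphi_1)$ sends $\zeta\mapsto \inner{\zeta}{\varphi_1}\pi(-z)\varphi_2$. In other words, $\pi(-z)(\varphi_2\otimes\varphi_1)=(\pi(-z)\varphi_2)\otimes\varphi_1$, which is again rank-one. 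Since $\varphi_1,\varphi_2\in\HS$, this operator is trace class and its trace is computable in any orthonormal basis.

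The key computational step is that the trace of a rank-one operator $\xi\otimes\eta$ equals $\inner{\xi}{\eta}$. Applying this with $\xi=\pi(-z)\varphi_2$ and $\eta=\varphi_1$ gives
\begin{equation*}
\tr\bigl(\pi(-z)(\varphi_2\otimes\varphi_1)\bigr)=\inner{\pi(-z)\varphi_2}{\varphi_1}.
\end{equation*}
Next I would rewrite this inner product in terms of the STFT. Using $\inner{\pi(-z)\varphi_2}{\varphi_1}=\overline{\inner{\varphi_1}{\pi(-z)\varphi_2}}$ is one route, but it is cleaner to move the time-frequency shift to the other slot via the adjoint: $\inner{\pi(-z)\varphi_2}{\varphi_1}=\inner{\varphi_2}{\pi(-z)^*\varphi_1}=\inner{\varphi_2}{\pi(z)\varphi_1}$ up to the phase relation between $\pi(-z)^*$ and $\pi(z)$. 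This is exactly where the definition $V_{\varphi_1}\varphi_2(z)=\inner{\varphi_2}{\pi(z)\varphi_1}$ enters, so that the trace becomes $V_{\varphi_1}\varphi_2(z)$ multiplied by an explicit phase arising from the $\pi(-z)^*$-versus-$\pi(z)$ bookkeeping.

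Finally I would assemble the phase factors. The Fourier-Wigner transform carries the prefactor $e^{-\pi i x\cdot\omega}$, and combining it with the phase from the composition law $\pi(-z)^*=e^{2\pi i x\cdot\omega}\pi(z)$ (which follows from $\pi(z)\psi(t)=e^{2\pi i\omega\cdot t}\psi(t-x)$) should produce precisely the factor $e^{\pi i x\cdot\omega}$ appearing in the definition $A(\varphi_2,\varphi_1)(z)=e^{\pi i x\cdot\omega}V_{\varphi_1}\varphi_2(z)$. The main obstacle is purely the careful tracking of these phase factors and the exact commutation relations among $\pi(z)$, $\pi(-z)$, and their adjoints, since a sign or factor-of-two error in the symplectic phase would leave a spurious exponential. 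Everything else is a routine consequence of the rank-one trace formula and the definitions of the STFT and the ambiguity function given earlier in the excerpt; once the phases match, the identity follows.
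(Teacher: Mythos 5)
Your overall strategy is sound, and it is in fact the only reasonable one: the paper does not prove this lemma in-text (it cites Lemma 6.1 of \cite{Luef:2017vs}), and the intended argument is exactly the reduction you describe --- $\pi(-z)(\varphi_2\otimes\varphi_1)=(\pi(-z)\varphi_2)\otimes\varphi_1$, the rank-one trace formula $\tr(\xi\otimes\eta)=\inner{\xi}{\eta}$, and then phase bookkeeping. The gap is that the one concrete phase identity you commit to is wrong, and since the entire content of the lemma is the phase factor, this is precisely the step that would fail. From $\pi(z)=M_\omega T_x$ one has $\pi(-z)^*=(M_{-\omega}T_{-x})^*=T_xM_\omega=e^{-2\pi i x\cdot\omega}M_\omega T_x=e^{-2\pi i x\cdot\omega}\pi(z)$, i.e.\ the exponent carries a \emph{minus} sign, not the $e^{+2\pi i x\cdot\omega}$ you wrote. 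If you carry your version through literally (remembering that $\inner{\cdot}{\cdot}$ is antilinear in the second slot, so the scalar exits conjugated), you get $\tr(\pi(-z)(\varphi_2\otimes\varphi_1))=e^{-2\pi i x\cdot\omega}V_{\varphi_1}\varphi_2(z)$ and hence $\F_W(\varphi_2\otimes\varphi_1)(z)=e^{-3\pi i x\cdot\omega}V_{\varphi_1}\varphi_2(z)$, which is not $A(\varphi_2,\varphi_1)(z)$.

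With the corrected identity the computation closes: the antilinearity conjugates the scalar, so
\begin{equation*}
\inner{\pi(-z)\varphi_2}{\varphi_1}=\inner{\varphi_2}{\pi(-z)^*\varphi_1}
=\inner{\varphi_2}{e^{-2\pi i x\cdot\omega}\pi(z)\varphi_1}
=e^{2\pi i x\cdot\omega}\inner{\varphi_2}{\pi(z)\varphi_1}
=e^{2\pi i x\cdot\omega}V_{\varphi_1}\varphi_2(z),
\end{equation*}
and therefore $\F_W(\varphi_2\otimes\varphi_1)(z)=e^{-\pi i x\cdot\omega}e^{2\pi i x\cdot\omega}V_{\varphi_1}\varphi_2(z)=e^{\pi i x\cdot\omega}V_{\varphi_1}\varphi_2(z)=A(\varphi_2,\varphi_1)(z)$. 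A safer route that avoids the adjoint/antilinearity interaction entirely is to compute $\inner{\pi(-z)\varphi_2}{\varphi_1}=\int e^{-2\pi i\omega\cdot t}\varphi_2(t+x)\overline{\varphi_1(t)}\,dt$ and substitute $t\mapsto t-x$, which gives $e^{2\pi i x\cdot\omega}\inner{\varphi_2}{\pi(z)\varphi_1}$ directly. You correctly flagged the phase tracking as the delicate point, but "should produce precisely the factor $e^{\pi i x\cdot\omega}$" is not a proof of it, and the formula you did write down would not.
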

 The Fourier-Wigner transform has many properties analogous to those of the Fourier transform of functions\cite{Werner:1984, Luef:2017vs}. It extends to a unitary operator $\F_W: \SC^2\to L^2(\R^{2d})$, and by the following proposition it interacts with the convolutions defined by Werner in the expected way.  

\begin{prop}
	\label{prop:convolutionandft}
	Let $f \in L^1(\R^{2d})$ and $S,T \in \tco$. 
	\begin{enumerate}
		\item $\F_{\sigma}(S \star T)=\F_W(S)\F_W(T)$.
		\item $\F_W(f \star S)=\F_{\sigma}(f)\F_W(S)$.  
	\end{enumerate}	
\end{prop}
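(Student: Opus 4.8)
The plan is to derive both identities from a single covariance property of the Fourier--Wigner transform, namely that $\alpha_z$ is intertwined with a symplectic modulation,
\begin{equation*}
\F_W(\alpha_z S)(w)=e^{-2\pi i\sigma(w,z)}\,\F_W S(w)\qquad\text{for }z,w\in\Rdd.
\end{equation*}
To prove this I would start from $\F_W(\alpha_z S)(w)=e^{-\pi i x\cdot\omega}\tr(\pi(-w)\pi(z)S\pi(z)^*)$ with $w=(x,\omega)$, bring $\pi(z)^*$ to the front by cyclicity of the trace, and push $\pi(-w)$ past $\pi(z)$ using the Weyl commutation relation $\pi(z)\pi(z')=e^{2\pi i\sigma(z,z')}\pi(z')\pi(z)$. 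The factor $\pi(z)^*\pi(z)=I$ then collapses and leaves exactly $e^{-2\pi i\sigma(w,z)}\tr(\pi(-w)S)$, giving the claimed identity. This is the one genuine computation underlying the proposition, and the phase $e^{-\pi i x\cdot\omega}$ built into $\F_W$ simply passes through unchanged.

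Granting the covariance identity, part (2) is essentially immediate. Since $f\star S=\iint_{\Rdd}f(y)\alpha_y(S)\,dy$ in the weak sense of section \ref{sec:weakint} and $B\mapsto\tr(\pi(-w)B)$ is a bounded linear functional on $\tco$, I would interchange $\F_W$ with the vector-valued integral to obtain
\begin{equation*}
\F_W(f\star S)(w)=\iint_{\Rdd}f(y)\,\F_W(\alpha_y S)(w)\,dy=\left(\iint_{\Rdd}f(y)\,e^{-2\pi i\sigma(w,y)}\,dy\right)\F_W S(w),
\end{equation*}
and the bracketed integral is precisely $\F_\sigma f(w)$ by the definition of the symplectic Fourier transform.

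For part (1) the main obstacle is to convert the operator-operator convolution, which hides the parity conjugation $\check T=PTP$ inside a trace, into a form to which the Fourier--Wigner machinery applies. My approach is to use that $\F_W$ extends to a unitary map $\SC^2\to L^2(\Rdd)$ (and $\tco\subset\SC^2$), so the associated Moyal identity reads $\tr(AB)=\inner{\F_W A}{\F_W(B^*)}_{L^2}$. Applying it with $A=S$ and $B=\alpha_z\check T$, using $(\alpha_z\check T)^*=\alpha_z\check{T^*}$ together with the covariance identity, I would arrive at
\begin{equation*}
(S\star T)(z)=\tr(S\alpha_z\check T)=\iint_{\Rdd}\F_W S(w)\,\overline{\F_W(\check{T^*})(w)}\,e^{2\pi i\sigma(w,z)}\,dw.
\end{equation*}
Because $e^{2\pi i\sigma(w,z)}=e^{-2\pi i\sigma(z,w)}$, the right-hand side is $\F_\sigma\big(\F_W S\cdot\overline{\F_W(\check{T^*})}\big)(z)$, so applying $\F_\sigma$ once more and invoking $\F_\sigma^2=I$ gives $\F_\sigma(S\star T)=\F_W S\cdot\overline{\F_W(\check{T^*})}$. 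The proof then finishes with the bookkeeping identity $\overline{\F_W(\check{T^*})(w)}=\F_W T(w)$, which I would verify directly from $P\pi(w)P=\pi(-w)$ and $\pi(w)^*=e^{-2\pi i x\cdot\omega}\pi(-w)$. This last step is where one must be most careful: the $e^{\pm\pi i x\cdot\omega}$ half-phases from the definition of $\F_W$ and the $e^{-2\pi i x\cdot\omega}$ from $\pi(w)^*$ must cancel exactly to reproduce $\F_W T$, and getting these signs right is the only delicate part of the argument.
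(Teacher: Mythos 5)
Your argument is correct; all the phase bookkeeping checks out. Note, however, that the paper does not prove Proposition \ref{prop:convolutionandft} at all --- it is stated in the preliminaries and quoted from \cite{Werner:1984, Luef:2017vs} --- so the comparison is really with the standard proof in those references, which your write-up essentially reproduces: the covariance identity $\F_W(\alpha_z S)(w)=e^{-2\pi i\sigma(w,z)}\F_W S(w)$ plus, for part (1), the quantum Parseval relation for the unitary $\F_W:\SC^2\to L^2(\Rdd)$. I verified the delicate final step: with $\pi(w)^*=e^{-2\pi i x\cdot\omega}\pi(-w)$ and $P\pi(-w)P=\pi(w)$ one indeed gets $\overline{\F_W(\check{T^*})(w)}=e^{\pi i x\cdot\omega}e^{-2\pi i x\cdot\omega}\tr(\pi(-w)T)=\F_W T(w)$, so the half-phases cancel as you claim. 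The only point worth tightening is the interchange of $\tr(\pi(-w)\,\cdot\,)$ with the integral defining $f\star S$: the integral in section \ref{sec:weakint} is defined only in the weak operator sense, so to pull a trace-class functional through it you should invoke the fact (from \cite{Luef:2017vs}) that $y\mapsto\alpha_y(S)$ is continuous into $\tco$ and the integral converges as a Bochner integral in $\tco$; once that is said, the interchange is legitimate and the proof is complete.
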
 

In time-frequency analysis and signal processing, operators are sometimes studied by considering the so-called \textit{spreading function} \cite{Feichtinger:1998}, which expresses the operator as an infinite linear combination of time-frequency shifts. In fact, the Fourier-Wigner transform and the spreading function differ only by a phase factor \cite{Luef:2017vs}.
\begin{prop} \label{prop:fwspreading}
\begin{enumerate}
	\item If $S\in \tco$ has spreading function $f\in L^1(\R^{2d})$, i.e.
	\begin{equation*}
  S=\int_{\R^{2d}} f(z) \pi(z) \ dz,
\end{equation*}
where the integral is interpreted as in section \ref{sec:weakint}, then 
\begin{equation*}
  \F_W(S)(z)= e^{i\pi x \cdot \omega}f(z).
\end{equation*}
\item The Weyl symbol $a_S$ of $S\in \tco$ is given by
\begin{equation*}
  a_S=\F_{\sigma}\F_W(S).
\end{equation*}
\end{enumerate}
\end{prop}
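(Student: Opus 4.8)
The plan is to reduce both statements to the rank-one case, exploiting that $\F_W$ restricts to a unitary map $\SC^2\to L^2(\Rdd)$, that $\tco\subset\SC^2$, and that the finite-rank operators are dense in $\SC^2$. Heuristically, part (1) is just the composition identity $\tr(\pi(-z)\pi(z'))=e^{2\pi i\omega'\cdot x}\tr(\pi(z'-z))$ combined with the distributional identity $\tr(\pi(w))=\delta_0(w)$, which after inserting the phase $e^{-\pi i x\cdot\omega}$ from the definition of $\F_W$ collapses the integral $S=\iint f(z')\pi(z')\,dz'$ to $e^{i\pi x\cdot\omega}f(z)$. The difficulty is that $\pi(w)$ is never trace class, so this manipulation is only formal; the main obstacle is thus to make it rigorous, and I would sidestep the distributional trace entirely by a duality argument.

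Concretely, for part (1) I would test the $L^2$-function $\F_W(S)$ against the family $A(\varphi_2,\varphi_1)=\F_W(\varphi_2\otimes\varphi_1)$ (Lemma \ref{lem:FWrankone}), which spans a dense subspace of $L^2(\Rdd)$ since $\F_W$ is unitary and the operators $\varphi_2\otimes\varphi_1$ are dense in $\SC^2$. Unitarity of $\F_W$ gives
\begin{equation*}
\inner{\F_W(S)}{A(\varphi_2,\varphi_1)}_{L^2}=\inner{S}{\varphi_2\otimes\varphi_1}_{\SC^2}=\tr\big(S(\varphi_1\otimes\varphi_2)\big)=\inner{S\varphi_1}{\varphi_2}.
\end{equation*}
The weak-integral definition of the spreading representation (section \ref{sec:weakint}) gives $\inner{S\varphi_1}{\varphi_2}=\iint_{\Rdd}f(z')\overline{V_{\varphi_1}\varphi_2(z')}\,dz'$, while the identity $A(\varphi_2,\varphi_1)=e^{i\pi x\cdot\omega}V_{\varphi_1}\varphi_2$ shows that $\inner{e^{i\pi x\cdot\omega}f}{A(\varphi_2,\varphi_1)}_{L^2}$ equals the very same integral (the two phase factors cancel). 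Hence $\F_W(S)$ and $e^{i\pi x\cdot\omega}f$ have identical pairings against a dense family, forcing $\F_W(S)=e^{i\pi x\cdot\omega}f$; this in particular upgrades the hypothesis $f\in L^1$ to $e^{i\pi x\cdot\omega}f\in L^2(\Rdd)$, so the equality holds genuinely in $L^2$.

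For part (2) I would again reduce to rank-one operators. The Weyl symbol of $\varphi_2\otimes\varphi_1$ is the cross-Wigner distribution $W(\varphi_2,\varphi_1)$, which is immediate from the defining relation $\inner{L_a\psi}{\phi}=\inner{a}{W(\psi,\phi)}$ together with the Moyal identity for cross-Wigner distributions. Combining this with Lemma \ref{lem:FWrankone} and the relation $W(\psi,\phi)=\F_{\sigma}(A(\psi,\phi))$ recalled in the preliminaries yields $a_{\varphi_2\otimes\varphi_1}=W(\varphi_2,\varphi_1)=\F_{\sigma}(A(\varphi_2,\varphi_1))=\F_{\sigma}\F_W(\varphi_2\otimes\varphi_1)$, which is exactly the asserted identity on rank-one operators. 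Since $S\mapsto a_S$ and $S\mapsto\F_{\sigma}\F_W(S)$ are both continuous from $\SC^2$ into $L^2(\Rdd)$ (the Weyl correspondence being unitary $L^2(\Rdd)\to\SC^2$, and $\F_{\sigma},\F_W$ unitary), and finite-rank operators are dense in $\SC^2\supset\tco$, the identity extends by linearity and continuity to every $S\in\tco$. The only genuinely delicate points are the density/separation step closing part (1) and keeping careful track of the convention-dependent phase $e^{i\pi x\cdot\omega}$ throughout.
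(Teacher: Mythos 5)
The paper never proves Proposition \ref{prop:fwspreading}: it is stated in the preliminaries and attributed to \cite{Luef:2017vs}, so there is no in-paper argument to compare against. Judged on its own terms, your strategy (reduce to rank-one operators via unitarity of $\F_W$ on $\SC^2$ and density of finite-rank operators) is the standard rigorous route, and your part (2) is correct and complete: the Weyl symbol of $\varphi_2\otimes\varphi_1$ is $W(\varphi_2,\varphi_1)=\F_\sigma(A(\varphi_2,\varphi_1))=\F_\sigma\F_W(\varphi_2\otimes\varphi_1)$, and both $S\mapsto a_S$ and $S\mapsto\F_\sigma\F_W(S)$ are continuous $\SC^2\to L^2(\Rdd)$, so the identity extends. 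The phase bookkeeping in part (1) is also right: $\inner{\F_W(S)}{A(\varphi_2,\varphi_1)}_{L^2}=\inner{S\varphi_1}{\varphi_2}=\iint f\,\overline{V_{\varphi_1}\varphi_2}\,dz=\inner{e^{i\pi x\cdot\omega}f}{A(\varphi_2,\varphi_1)}_{L^2}$.

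The gap is in the last step of part (1). From equal pairings against a family that is dense in $L^2(\Rdd)$ you may conclude $u:=\F_W(S)-e^{i\pi x\cdot\omega}f=0$ only if you already know $u\in L^2$; here $\F_W(S)\in L^2$ but $e^{i\pi x\cdot\omega}f$ is a priori only in $L^1$, so $u\in L^1+L^2$ and orthogonality to a dense subspace of $L^2$ does not by itself force $u=0$. Your closing remark that the equality ``upgrades'' $f$ to $L^2$ uses the conclusion to justify the step, which is circular. The fix is short but needs to be said: take $\varphi_1=\varphi_0$ a normalized Gaussian and $\varphi_2=\pi(w)\varphi_0$; then, up to nonvanishing phases, $A(\varphi_2,\varphi_1)$ runs over all modulations and translations $M_\zeta T_w\Phi$ of a fixed Gaussian $\Phi$ on $\Rdd$. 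Since $u\,\overline{T_w\Phi}\in L^1(\Rdd)$, the vanishing of $\iint u\,\overline{M_\zeta T_w\Phi}\,dz$ for all $\zeta$ and injectivity of the Fourier transform on $L^1$ give $u\,\overline{T_w\Phi}=0$ a.e.\ for every $w$, and since $\Phi$ has no zeros, $u=0$. (Equivalently: the span of ambiguity functions of Schwartz windows is dense in $\mathcal{S}(\Rdd)$ and $L^1+L^2$ embeds injectively into $\mathcal{S}^{\prime}(\Rdd)$.) With that separation step supplied, the proof is correct.
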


As for the Fourier transform of functions, there is also a Hausdorff-Young inequality associated to the Fourier-Wigner transform \cite{Werner:1984, Luef:2017vs}.
\begin{prop} \label{prop:hausyoung}
	Let $1\leq p \leq 2$ and let $q$ be the conjugate exponent determined by $\frac{1}{p}+\frac{1}{q}=1$. If $S\in \SC^p$, then $\F_W(S)\in L^q(\R^{2d})$  with norm estimate
  	\begin{equation*}
  \|\F_W(S)\|_{L^q} \leq \|S\|_{\SC^p}.
\end{equation*}
\end{prop}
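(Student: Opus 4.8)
The plan is to obtain the estimate by complex interpolation—a Riesz--Thorin argument—between the two endpoints $p=1$ and $p=2$, exactly as one proves the classical Hausdorff--Young inequality for the Fourier transform of functions. Both the domain spaces $\SC^p$ and the target spaces $L^q(\Rdd)$ form complex interpolation scales, and for $\theta\in[0,1]$ determined by $\tfrac1p=1-\tfrac\theta2$ and $\tfrac1q=\tfrac\theta2$ one has $[\SC^{1},\SC^{2}]_{\theta}=\SC^{p}$ and $[L^{\infty},L^{2}]_{\theta}=L^{q}$; note that this choice of $\theta$ is exactly consistent with the conjugacy relation $\tfrac1p+\tfrac1q=1$. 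Since $\F_W$ is linear, it therefore suffices to verify the bound $\|\F_W(S)\|_{L^q}\le\|S\|_{\SC^p}$ at the two endpoints and then invoke the interpolation theorem for the Schatten scale.

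The endpoint $p=q=2$ is immediate: it was recorded above that $\F_W$ extends to a unitary operator $\F_W:\SC^2\to L^2(\Rdd)$, so in particular $\|\F_W(S)\|_{L^2}=\|S\|_{\SC^2}$ and the operator norm is $1$. For the endpoint $p=1$, $q=\infty$ I would argue directly from the definition. For $S\in\tco=\SC^1$ and any $z=(x,\omega)\in\Rdd$,
\[
|\F_W(S)(z)|=\bigl|e^{-\pi i x\cdot\omega}\bigr|\,\bigl|\tr(\pi(-z)S)\bigr|=\bigl|\tr(\pi(-z)S)\bigr|.
\]
Since each time-frequency shift $\pi(-z)$ is unitary, we have $\|\pi(-z)\|_{\bo}=1$, and the $\SC^1$--$\SC^\infty$ duality from the duality lemma gives $\bigl|\tr(\pi(-z)S)\bigr|\le\|\pi(-z)\|_{\bo}\,\|S\|_{\tco}=\|S\|_{\SC^1}$. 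Taking the supremum over $z\in\Rdd$ yields $\|\F_W(S)\|_{L^\infty}\le\|S\|_{\SC^1}$, so the operator norm at this endpoint is also $\le 1$.

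With both endpoint maps having norm $\le 1$, complex interpolation for the compatible couples $(\SC^1,\SC^2)$ and $(L^\infty,L^2)$ produces $\|\F_W(S)\|_{L^q}\le\|S\|_{\SC^p}$ for every $\theta\in[0,1]$, which is precisely the asserted inequality for $1\le p\le 2$; membership $\F_W(S)\in L^q(\Rdd)$ is part of this conclusion. The only point requiring care—and the main (though modest) obstacle—is the interpolation bookkeeping: one should work on a common dense subspace such as the finite-rank operators, which lie in every $\SC^p$ and on which $\F_W$ is unambiguously defined via the trace formula, check that the two endpoint realizations of $\F_W$ agree there, and then appeal to the fact that the Schatten classes form a complex interpolation scale with $[\SC^{p_0},\SC^{p_1}]_\theta=\SC^{p_\theta}$. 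Once this is set up, the bound extends from the dense subspace to all of $\SC^p$ by density, completing the proof.
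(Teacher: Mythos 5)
Your proof is correct and is essentially the argument behind the cited result: the paper states this proposition without proof, referring to Werner and to \cite{Luef:2017vs}, where it is obtained exactly as you do, by complex interpolation between the trivial bound $|\tr(\pi(-z)S)|\le\|S\|_{\tco}$ at the $\SC^1\to L^\infty$ endpoint and the unitarity of $\F_W:\SC^2\to L^2(\R^{2d})$ at the other. Your attention to the interpolation bookkeeping on finite-rank operators is appropriate and nothing is missing.
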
 
Using Lieb's uncertainty principle \cite{Grochenig:2001,li90-1} we can improve this result in a special case \cite{Luef:2017vs}.
 \begin{cor} \label{cor:generalizedlieb}
 Let $2\leq p <\infty$. If $S\in \tco$, then 
 \begin{equation*}
  \|\F_W(S)\|_{L^p}\leq  \left(\frac{2}{p}\right)^{d/p}
 \|S\|_{\tco}.
\end{equation*}
	
 \end{cor}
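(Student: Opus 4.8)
The plan is to reduce the general trace-class estimate to the rank-one case, where the bound is exactly Lieb's uncertainty inequality for the ambiguity function. First I would record Lieb's inequality in the form $\|V_{\phi}\psi\|_{L^p}\leq (2/p)^{d/p}\|\psi\|_{L^2}\|\phi\|_{L^2}$ for $2\leq p<\infty$, valid for all $\psi,\phi\in L^2(\R^d)$ (see \cite{Grochenig:2001,li90-1}). Since the cross-ambiguity function $A(\psi,\phi)$ agrees with $V_{\phi}\psi$ up to a unimodular phase factor, it has the same $L^p$-norm, so $\|A(\psi,\phi)\|_{L^p}\leq (2/p)^{d/p}\|\psi\|_{L^2}\|\phi\|_{L^2}$.

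Next, by Lemma \ref{lem:FWrankone} the Fourier-Wigner transform of a rank-one operator is precisely a cross-ambiguity function, $\F_W(\varphi_2\otimes\varphi_1)=A(\varphi_2,\varphi_1)$. Combined with the identity $\|\varphi_2\otimes\varphi_1\|_{\tco}=\|\varphi_1\|_{L^2}\|\varphi_2\|_{L^2}$ for the trace norm of a rank-one operator, Lieb's inequality immediately yields the claimed estimate for rank-one operators, with exactly the constant $(2/p)^{d/p}$.

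To pass to a general $S\in\tco$, I would invoke the singular value decomposition of Proposition \ref{prop:singval}, writing $S=\sum_n s_n(S)\,\psi_n\otimes\phi_n$ with $\{\psi_n\},\{\phi_n\}$ orthonormal and $\sum_n s_n(S)=\|S\|_{\tco}$. Applying the triangle inequality in $L^p$ together with the rank-one bound and the normalization $\|\psi_n\|_{L^2}=\|\phi_n\|_{L^2}=1$ gives
\[
\|\F_W(S)\|_{L^p}\leq \sum_n s_n(S)\,\|A(\psi_n,\phi_n)\|_{L^p}\leq \left(\frac{2}{p}\right)^{d/p}\sum_n s_n(S)=\left(\frac{2}{p}\right)^{d/p}\|S\|_{\tco},
\]
which is the assertion.

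The step requiring care is the interchange of $\F_W$ with the infinite sum in the singular value decomposition: a priori one only knows that the partial sums $S_N=\sum_{n\leq N}s_n(S)\,\psi_n\otimes\phi_n$ converge to $S$ in trace norm. The $L^p$-triangle inequality already shows that $\sum_n s_n(S)\,A(\psi_n,\phi_n)$ converges absolutely, hence in $L^p$, to some limit $g$; to identify $g$ with $\F_W(S)$ I would use that $\F_W$ is bounded from $\tco$ to $L^{\infty}$ (the endpoint $p=1$ of Proposition \ref{prop:hausyoung}), so that $\F_W(S_N)\to\F_W(S)$ uniformly, and then match the two limits along an almost-everywhere convergent subsequence. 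This is the only genuinely technical point; everything else is a direct consequence of Lemma \ref{lem:FWrankone} and Lieb's inequality.
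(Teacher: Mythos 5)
Your argument is correct and is essentially the proof the paper relies on (via \cite{Luef:2017vs}): reduce to the rank-one case using the singular value decomposition, where the bound is exactly Lieb's inequality for $\|V_{\phi}\psi\|_{L^p}$, and sum with the triangle inequality, the constant $\left(\tfrac{2}{p}\right)^{d/p}$ being preserved since $\sum_n s_n(S)=\|S\|_{\tco}$. Your care about identifying the $L^p$-limit of the partial sums with $\F_W(S)$ via the $\tco\to L^{\infty}$ bound is a valid way to justify the interchange.
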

 
 \subsubsection{Tauberian theorems for operators}
 Werner \cite{Werner:1984} has proved a version of Wiener's Tauberian theorem for operators. The theorem was later generalized in \cite{Kiukas:2012gt}, and more equivalent statements and a proof may be found in \cite{Luef:2017vs,Kiukas:2012gt}. We state the relevant parts of the theorem for our purposes.

\begin{thm} \label{thm:wiener}
Let $S\in \tco$. 
	\begin{itemize}
\item [(a)] The following are equivalent.  
\begin{enumerate}[label=(a\arabic*)]
		\item The set $\{z\in \R^{2d}:\F_W(S)=0\}$ is empty.
		\item If $f \in L^{\infty}(\mathbb{R}^{2d})$ and $f\star S=0$, then $f=0$.
		\item $ L^1(\R^{2d})\star S$ is dense in $\SC^1$.
		\item If $T\in \bo$ and $S\star T=0$, then $T=0$.
	\end{enumerate}
\item [(b)] The following are equivalent.  
\begin{enumerate}[label=(b\arabic*)]
		\item The set $\{z\in \R^{2d}:\F_W(S)=0\}$ has Lebesgue measure $0$.
		\item If $f \in L^2(\mathbb{R}^{2d})$ and $f\star S=0$, then $f=0$.
		\item $ L^2(\R^{2d})\star S$ is dense in $\SC^2$.
		\item If $T\in \SC^2$ and $S\star T=0$, then $T=0$.
	\end{enumerate}
\item [(c)] The following are equivalent.  
\begin{enumerate}[label=(c\arabic*)]
		\item The set $\{z\in \R^{2d}:\F_W(S)=0\}$ has dense complement.
		\item If $f \in L^1(\mathbb{R}^{2d})$ and $f\star S=0$, then $f=0$.
		\item $ L^{\infty}(\R^{2d})\star S$ is weak* dense in $\bo$.
		\item If $T\in \tco$ and $S\star T=0$, then $T=0$.
	\end{enumerate}
\end{itemize}
\end{thm}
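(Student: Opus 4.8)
The plan is to transport every condition to the Fourier side via the identities $\F_\sigma(S\star T)=\F_W(S)\F_W(T)$ and $\F_W(f\star S)=\F_\sigma(f)\F_W(S)$ of Proposition~\ref{prop:convolutionandft}, so that each convolution $\star$ becomes a pointwise product. Recall that $\F_\sigma$ is a bijection of each of $L^1,L^2,L^\infty,\mathcal{S}'(\Rdd)$ onto itself (with $\F_\sigma(L^1)\subset C_0$), that $\F_W$ is unitary from $\SC^2$ onto $L^2(\Rdd)$, that $\F_W$ maps $\tco$ into $C_0(\Rdd)$, and (via the Weyl symbol $a_S=\F_\sigma\F_W(S)$ of Proposition~\ref{prop:fwspreading}) that $\F_W$ extends to an injection of $\bo$ into $\mathcal{S}'(\Rdd)$. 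Under this dictionary the three zero-set hypotheses (a1)/(b1)/(c1) are exactly the conditions under which a product $g\cdot\F_W(S)$ can vanish only trivially, for $g$ in the relevant class: an $L^2$-function may vanish on any positive-measure set; a $C_0$-function vanishing on a dense set must vanish identically; and a distribution may be supported at a single point. In each part I will prove the equivalence of $(\cdot 1)$ with the two injectivity statements $(\cdot 2)$ and $(\cdot 4)$ directly on the Fourier side, and then obtain the density statement $(\cdot 3)$ from $(\cdot 4)$ by Banach-space duality, since a subspace is (weak*-)dense exactly when its (pre-)annihilator is trivial.

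Part (b) is the cleanest and serves as a model. For $f\in L^2$ and $S\in\tco$ one has $f\star S\in\SC^2$ with $\F_W(f\star S)=\F_\sigma(f)\F_W(S)$ in $L^2$; since $\F_\sigma$ is unitary on $L^2$, $\F_\sigma(f)$ runs through all of $L^2$, so a nonzero $f$ with $f\star S=0$ exists iff $\F_W(S)$ vanishes on a set of positive measure, giving (b1)$\Leftrightarrow$(b2). The same computation with $\F_\sigma(S\star T)=\F_W(S)\F_W(T)$, using that $\F_W$ is onto $L^2$, gives (b1)$\Leftrightarrow$(b4). Finally (b3)$\Leftrightarrow$(b4) follows from the self-duality of $\SC^2$: the annihilator of $L^2\star S$ consists of those $T$ with $y\mapsto\tr(\alpha_y(S)T^*)=(S\star\check{T^*})(-y)$ identically zero, i.e.\ with $S\star\check{T^*}=0$, and $T\mapsto\check{T^*}$ is a bijection of $\SC^2$.

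Part (c) is still elementary, because trace-class and $L^1$ inputs have continuous transforms. If $f\in L^1$ then $\F_\sigma(f),\F_W(S)\in C_0$, so $f\star S=0$ forces $\F_\sigma(f)\F_W(S)\equiv 0$; when the complement of the zero set is dense, $\F_\sigma(f)$ vanishes on a dense set and hence everywhere, so $f=0$, and conversely an open set $U$ inside the zero set lets me pick a Schwartz bump supported in $U$ whose symplectic-Fourier preimage $f$ is a nonzero $L^1$-function with $f\star S=0$; this proves (c1)$\Leftrightarrow$(c2). The identical argument with $\F_W(T)\in C_0$ for $T\in\tco$ yields (c1)$\Leftrightarrow$(c4), the bump now being $\F_W(T)$ with $T\in\tco$ (its preimage is trace class, since a compactly supported smooth symbol gives a smoothing operator). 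For (c3), weak* density of $L^\infty\star S$ in $\bo=(\tco)^*$ is equivalent to triviality of its pre-annihilator in $\tco$; by \eqref{eq:convolutionsofboundedexplicit} this pre-annihilator is $\{T\in\tco:\check S\star T=0\}$, so (c3) is (c4) for $\check S$, which by $\F_W(\check S)(z)=\F_W(S)(-z)$ has the same empty-interior zero set and hence is again (c1).

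Part (a) is the genuinely Tauberian part and the one I expect to be the main obstacle. The converse directions are easy and explicit: if $\F_W(S)(z_0)=0$, then $f(z)=e^{-2\pi i\sigma(z,z_0)}\in L^\infty$ satisfies $\F_\sigma(f)=\delta_{z_0}$ and hence $\F_W(f\star S)=\F_W(S)(z_0)\,\delta_{z_0}=0$, violating (a2), while $T=\pi(z_0)\in\bo$ has $\F_W(\pi(z_0))=c\,\delta_{z_0}$ (its spreading function is a point mass, by Proposition~\ref{prop:fwspreading}) and hence $S\star\pi(z_0)=0$, violating (a4). The forward directions, however, require passing from ``$\F_\sigma(f)\,\F_W(S)=0$ with $\F_W(S)$ continuous and nowhere vanishing'' to ``$\F_\sigma(f)=0$'', where $\F_\sigma(f)$ is only a tempered distribution; one cannot simply divide by $\F_W(S)$, since for general $S\in\tco$ the function $\F_W(S)$ is continuous but not smooth. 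This is exactly the content of Wiener's Tauberian theorem on $\R^{2d}$ (equivalently, the fact that one-point sets admit spectral synthesis), which I would invoke to conclude (a1)$\Rightarrow$(a4) and likewise (a1)$\Rightarrow$(a2). Granting the injectivity statements, (a3) follows by duality as before: the annihilator of $L^1\star S$ in $\bo$ is $\{T\in\bo:S\star\check T=0\}$, which is trivial precisely when (a4) holds. The whole difficulty of the theorem is thus concentrated in this single low-regularity division step of part (a).
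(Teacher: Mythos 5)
First, a remark on the comparison you asked for: the paper does not actually prove Theorem \ref{thm:wiener} — it states it and refers to \cite{Werner:1984}, \cite{Kiukas:2012gt} and \cite{Luef:2017vs} for the proof — so your attempt has to be measured against the arguments in those references. Your treatment of parts (b) and (c), of the duality equivalences $(\cdot 3)\Leftrightarrow(\cdot 4)$ via annihilators and \eqref{eq:convolutionsofboundedexplicit}, and of the easy converse implications in (a) is essentially correct and matches the structure of the cited proofs. Two small points to tidy: in (c4)$\Rightarrow$(c1) the compactly supported bump is $\F_W(T)$, so the Weyl symbol of $T$ is its (Schwartz, not compactly supported) symplectic Fourier transform, and $T\in\tco$ because Schwartz Weyl symbols give Schwartz operators; and for $f\in L^{\infty}$ the identity $\F_W(f\star S)=\F_{\sigma}(f)\F_W(S)$ is a product of a tempered distribution with a merely continuous function, which is covered by neither Proposition \ref{prop:convolutionandft} nor Proposition \ref{prop:schwartzoperators} — for the characters you use in (a) one should instead check $f\star S=0$ directly from the duality definition \eqref{eq:convolutionsofboundedexplicit}, which also fixes the sign/reflection ambiguity.

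The genuine gap is where you expect it, in the forward direction of (a), but it is not the gap you describe. The needed implication is \emph{not} "exactly the content of Wiener's Tauberian theorem on $\Rdd$": Wiener's theorem concerns $\hat g\,\hat u=0$ with $g\in L^1$, whereas your nowhere-vanishing factor $\F_W(S)$ is \emph{not} the symplectic Fourier transform of an $L^1(\Rdd)$ function (its $\F_{\sigma}$-preimage is the Weyl symbol of $S$, i.e.\ an operator), so the classical theorem cannot be invoked as it stands. The missing step — which is precisely the content of Werner's quantum Wiener theorem — is a reduction of the operator equation to a genuine function convolution. For instance: the autocorrelation $g:=S\star\check{S^*}$ lies in $L^1(\Rdd)$ by Lemma \ref{lem:werner} and satisfies $\F_{\sigma}(g)=\F_W(S)\overline{\F_W(S)}=|\F_W(S)|^2$, which is nowhere zero under (a1); if $S\star T=0$ for some $T\in\bo$, then associativity and commutativity of the mixed convolutions give, for every $R\in\tco$,
\[
  g*(R\star T)=(S\star\check{S^*})*(R\star T)=(\check{S^*}\star R)*(S\star T)=0,
\]
so the classical Wiener theorem applied to $g$ yields $R\star T=0$ for all $R\in\tco$; evaluating the (continuous) function $R\star T$ at $z=0$ gives $\tr(R\check{T})=0$ for all $R$, hence $T=0$. (Equivalently, for (a1)$\Leftrightarrow$(a2) one can apply the translation-invariant-subspace form of Wiener's theorem to $\check{S}\star\tco\subset L^1(\Rdd)$, whose common Fourier zero set is exactly the zero set of $\F_W(\check{S})$ because ambiguity functions of Gaussians vanish nowhere.) Without some such reduction, your "low-regularity division step" remains unjustified, and that reduction is the actual mathematical content of part (a).
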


 \subsection{Schwartz operators and tempered distributions}
 The theory of convolutions and Fourier transforms of operators can be extended to more general objects than bounded operators, just as the convolution and Fourier transform of functions is extended from the $L^p$-spaces to tempered distributions. To define this extension, we start by defining two classes of operators. We let $\schwartz$ be the set of pseudodifferential operators with Weyl symbol in the Schwartz class $\mathcal{S}(\R^{2d})$, and we let $\tempdist$ be the set of pseudodifferential operators with Weyl symbol in the tempered distributions $\mathcal{S}^{\prime}(\R^{2d})$. These sets of operators were studied in detail by Keyl et al. in \cite{Keyl:2016}. They show that $\schwartz$ may be equipped with a topology making it a Frechet space, and that $\tempdist$ is the topological dual space of $\schwartz$ in this topology. Hence one may define convolutions and Fourier transforms on $\tempdist$ using duality. We summarize the main results in the following proposition, and refer to section 5 of \cite{Keyl:2016} for proofs.
 
 \begin{prop} \label{prop:schwartzoperators}
 	
 	\begin{enumerate}
 		\item Let $S,T\in \schwartz$, $A\in \tempdist$,$f\in \mathcal{S}(\R^{2d})$ and $\phi\in \mathcal{S}^{\prime}(\R^{2d})$. The following convolutions may be defined:
 		\begin{align*}
  S\star T &\in \mathcal{S}(\R^{2d})   &&f\star S \in \schwartz \\
  S\star A &\in \mathcal{S}^{\prime}(\R^{2d})  && \phi \star S \in \tempdist \\
  & && f\star A \in \tempdist.
  \end{align*}
  \item The definitions in part (1) are compatible with those in section \ref{sec:werner} whenever both are applicable.
  \item The Fourier-Wigner transform may be extended to a topological isomorphism $\F_W:\tempdist\to \mathcal{S}^{\prime}(\R^{2d})$.
  \item The relations  $\F_{\sigma} (S \star T)=\F_W(S)\F_W(T)$ and $\F_W(f \star S)=\F_{\sigma}(f)\F_W(S)$ still hold for operators $S$,$T$ and a function $f$ whenever the convolutions are defined by part (1).
  \item The Weyl symbol of $A\in \tempdist$ is given by $\F_{\sigma}\F_W(A)$. 
 	\end{enumerate}
 \end{prop}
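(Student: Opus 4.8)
The plan is to reduce every assertion to a statement about Weyl symbols, where the spaces $\schwartz$ and $\tempdist$ become, by the very definition of their topologies, the familiar spaces $\mathcal{S}(\Rdd)$ and $\mathcal{S}^{\prime}(\Rdd)$. Concretely, the Weyl transform $f\mapsto L_f$ is a topological isomorphism $\mathcal{S}(\Rdd)\to\schwartz$, and its transpose an isomorphism $\mathcal{S}^{\prime}(\Rdd)\to\tempdist$; I would use this dictionary to translate each operator convolution into an ordinary operation on symbols. The two facts that make the dictionary explicit are Lemma~\ref{lem:shiftofweyl}, which gives $\alpha_z(L_f)=L_{T_zf}$ and $\check{L_f}=L_{\check f}$, and Proposition~\ref{prop:fwspreading}, from which $\F_W(L_a)=\F_{\sigma}(a)$ follows at once by applying the involution $\F_{\sigma}$ to $a_S=\F_{\sigma}\F_W(S)$.

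First I would settle the purely Schwartz-level statements in parts (1) and (2). For $f\in\mathcal{S}(\Rdd)$ and $S=L_a$ with $a\in\mathcal{S}(\Rdd)$, pulling the Weyl transform through the weak integral and using Lemma~\ref{lem:shiftofweyl} yields $f\star S=L_{f\ast a}$, the ordinary convolution of two Schwartz functions; hence $f\star S\in\schwartz$, and the formula agrees with the trace-class definition of section~\ref{sec:werner} because Schwartz operators are trace class, which gives compatibility. For $S=L_a$, $T=L_b$, I would compute $S\star T$ through its symplectic Fourier transform: Proposition~\ref{prop:convolutionandft} gives $\F_{\sigma}(S\star T)=\F_W(S)\F_W(T)=\F_{\sigma}(a)\,\F_{\sigma}(b)$, so that $S\star T=\F_{\sigma}\bigl(\F_{\sigma}(a)\,\F_{\sigma}(b)\bigr)\in\mathcal{S}(\Rdd)$, since $\F_{\sigma}$ preserves $\mathcal{S}(\Rdd)$ and products of Schwartz functions are Schwartz. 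This simultaneously proves part (4) in the Schwartz case.

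Next I would establish the isomorphism in part (3) and deduce part (5). By the identity $\F_W(L_a)=\F_{\sigma}(a)$, the map $\F_W\colon\schwartz\to\mathcal{S}(\Rdd)$ is the composition of the inverse Weyl transform with $\F_{\sigma}$, both topological isomorphisms, hence itself a topological isomorphism; taking transposes extends it to a topological isomorphism $\tempdist\to\mathcal{S}^{\prime}(\Rdd)$, and part (5), $a_A=\F_{\sigma}\F_W(A)$ for $A\in\tempdist$, is then the weak-$\ast$ continuous extension of Proposition~\ref{prop:fwspreading}. The remaining convolutions in part (1) I would \emph{define} by transposition, mirroring the adjoint relation \eqref{eq:convolutionsofbounded} of the bounded setting: for instance $\langle \phi\star S,T\rangle:=\langle\phi,\check S\star T\rangle$ for $\phi\in\mathcal{S}^{\prime}(\Rdd)$, $S\in\schwartz$ and test operator $T\in\schwartz$, which is meaningful because $\check S\star T\in\mathcal{S}(\Rdd)$ by the first case, and analogously for $S\star A$ and $f\star A$. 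Their Fourier relations, and the compatibility in part (2), then follow by pairing against Schwartz test objects and invoking the Schwartz-level identities already in hand, together with the weak-$\ast$ density of $\schwartz$ in $\tempdist$.

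I expect the only genuine difficulty to be the continuity bookkeeping underlying the duality definitions: one must verify that the bilinear maps $(f,a)\mapsto f\ast a$ and $(a,b)\mapsto\F_{\sigma}(\F_{\sigma}(a)\F_{\sigma}(b))$ are continuous from $\mathcal{S}(\Rdd)\times\mathcal{S}(\Rdd)$ into $\mathcal{S}(\Rdd)$ with respect to the Schwartz seminorms, so that the transposed maps land in $\tempdist$ and the pairings above are well defined and separately continuous. These estimates are standard for convolution and multiplication on $\mathcal{S}(\Rdd)$, but keeping track of the parity twist $\check{(\cdot)}$ and confirming that all overlapping definitions coincide is where the care is needed; everything else is a formal transport of function-space identities through the Weyl dictionary.
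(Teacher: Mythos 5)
Your outline is essentially sound, but note that the paper does not prove Proposition \ref{prop:schwartzoperators} at all: it is an imported summary, with the reader referred to Section 5 of \cite{Keyl:2016} for the proofs. Measured against that reference, your strategy is the same one: do everything at the level of Weyl symbols for genuine Schwartz operators (where $f\star L_a=L_{f\ast a}$ and $\F_\sigma(S\star T)=\F_W(S)\F_W(T)$ reduce to convolution and multiplication of Schwartz functions), and then extend to $\tempdist$ by transposing the separately continuous bilinear maps, exactly as equation \eqref{eq:convolutionsofbounded} does in the bounded setting. The one point where you are presupposing rather than proving something is the opening claim that the Weyl transform is a topological isomorphism $\mathcal{S}(\Rdd)\to\schwartz$ ``by the very definition of their topologies'': in \cite{Keyl:2016} the Fr\'echet topology on $\schwartz$ is defined intrinsically by operator seminorms involving position and momentum operators, and the identification of $\schwartz$ with the Weyl image of $\mathcal{S}(\Rdd)$, as a topological isomorphism, is itself one of the main theorems there. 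Within the present paper's framing (where $\schwartz$ is \emph{defined} as the set of operators with Schwartz Weyl symbol) you may legitimately transport the topology and proceed as you do, but you should say explicitly that this is the topology you are using, since otherwise the continuity bookkeeping you correctly identify as the crux has nothing to attach to. A second small gap: extending $\F_W$ to $\tempdist$ by taking transposes requires checking that the dual-defined map actually restricts to the original $\F_W$ on trace class operators, which needs the Parseval-type relation $\tr(ST)=\iint\F_W(S)\overline{\F_W(T^*)}\,dz$ (or an equivalent self-duality of $\F_W$); you assert the compatibility in part (2) but do not indicate which identity carries it.
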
 
 \begin{rem}
 	By the Schwartz kernel theorem (see \cite{Hormander:1983}), we know that we may identify $\tempdist$ with the continuous operators from $\mathcal{S}(\Rdd)$ to $\mathcal{S}^{\prime}(\Rdd)$.
 \end{rem}
 
 \subsection{Positive operator valued measures} \label{sec:povm}
In section \ref{sec:genlocandpovm} of this paper we will argue that the notion of a positive operator valued measure is a natural framework for localization operators and Cohen's class of time-frequency distributions. This notion is more commonly used in operator theory and quantum mechanics  \cite{Beneduci:2014}. We recall the basic concepts.
\begin{defn} \label{def:povm}
	Let $\mathcal{B}(\R^{2d})$ denote the $\sigma$-algebra of Borel subsets of $\R^{2d}$.  A positive operator valued measure (POVM) on $\R^{2d}$ is a mapping $F:\mathcal{B}(\R^{2d})\to B(\HS)$ such that
	\begin{enumerate}
		\item $F(M)$ is a positive operator for any $M\in \mathcal{B}(\R^{2d})$,
		\item $F(\R^{2d})$ is the identity operator on $\HS$,
		\item $F\left( \cup_{i\in \mathbb{N}} M_i \right)=\sum_{i\in \mathbb{N}} F(M_i)$ for any countable collection of disjoint, measurable subsets $\{M_i\}_{i\in \mathbb{N}}$ of $\R^{2d}$, where the sum converges in the weak operator topology.
	\end{enumerate}
\end{defn}
Hence a POVM on $\R^{2d}$ assigns a positive operator on $\HS$ to each Borel subset of $\R^{2d}$. Convergence in the weak operator topology of the sum $\sum_{i\in \mathbb{N}} F(M_i)$ to the operator $T:=F\left( \cup_{i\in \mathbb{N}} M_i\right)$ means that for any $\psi,\phi\in \HS$ we have $\sum_{i\in \mathbb{N}} \inner{F(M_i)\psi}{\phi}=\inner{T\psi}{\phi}$.
Any POVM $F$ appearing in this text will be \textit{covariant}, meaning that $ \alpha_z(F(M))=F(M+z) $ for any $z\in \R^{2d}$ and $M\in \mathcal{B}(\R^{2d})$, where $M+z=\{m+z:m\in M\}$ and $\alpha$ is the shift of operators defined in section \ref{sec:werner}. 

\subsubsection{Integration and the probability measures associated to a POVM}
Let $F$ be a fixed POVM. For each $\psi\in \HS$ with $\|\psi\|_{L^2}=1$, $F$ allows us to construct a probability measure $\mu_{\psi}^F$ on $\R^{2d}$ by defining
\begin{equation*}
    \mu_{\psi}^F(\Omega) = \inner{F(\Omega)\psi}{\psi}
\end{equation*}
for $\Omega \subset \R^{2d}$. 

Using the measures $\mu_{\psi}^F$, we may define a notion of integration w.r.t. the POVM $F$ \cite[Sec. 5 Thm. 9]{Berberian:1966}
\begin{lem} \label{lem:povmintegration}
	If $f:\Rdd \to \C$ is a measurable, bounded function, then there exists a unique operator $A_f\in \bo$ such that $\inner{A_f\psi}{\psi}=\iint_{\Rdd} f(z) d\mu_{\psi}^F$ for any $\psi\in \HS$.
\end{lem}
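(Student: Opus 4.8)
The plan is to recover $A_f$ from an associated bounded sesquilinear form on $\HS$ and then invoke the standard correspondence between bounded sesquilinear forms and bounded operators. First I would note that for every $\psi\in\HS$ (not only unit vectors) the set function $\mu^F_\psi(\Omega)=\inner{F(\Omega)\psi}{\psi}$ is a finite positive measure with total mass $\inner{F(\Rdd)\psi}{\psi}=\|\psi\|_{L^2}^2$, the countable additivity being exactly the weak-operator convergence required in Definition \ref{def:povm}. For $\psi,\phi\in\HS$ I then set $\mu^F_{\psi,\phi}(\Omega)=\inner{F(\Omega)\psi}{\phi}$; this is again countably additive, and by polarization (recalling that $\inner{\cdot}{\cdot}$ is linear in the first and antilinear in the second argument)
\[
\mu^F_{\psi,\phi}=\tfrac14\sum_{k=0}^{3} i^{k}\,\mu^F_{\psi+i^{k}\phi},
\]
so each $\mu^F_{\psi,\phi}$ is a finite complex measure, sesquilinear in $(\psi,\phi)$. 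Since $f$ is bounded and measurable I can therefore define
\[
B_f(\psi,\phi):=\iint_{\Rdd} f(z)\,d\mu^F_{\psi,\phi}(z),
\]
which is sesquilinear because integration is linear in the measure.

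The crux is to show that $B_f$ is bounded, with $|B_f(\psi,\phi)|\le \|f\|_{L^\infty}\,\|\psi\|_{L^2}\|\phi\|_{L^2}$, which I would obtain through a total-variation estimate on $\mu^F_{\psi,\phi}$. For a finite Borel partition $\{\Omega_i\}$ of $\Rdd$, positivity of $F(\Omega_i)$ makes $\inner{F(\Omega_i)\cdot}{\cdot}$ a positive semidefinite form, so the Cauchy--Schwarz inequality gives $|\inner{F(\Omega_i)\psi}{\phi}|\le \inner{F(\Omega_i)\psi}{\psi}^{1/2}\inner{F(\Omega_i)\phi}{\phi}^{1/2}$. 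Summing over $i$ and applying Cauchy--Schwarz for sequences yields
\[
\sum_i |\mu^F_{\psi,\phi}(\Omega_i)| \le \Big(\sum_i \mu^F_\psi(\Omega_i)\Big)^{1/2}\Big(\sum_i \mu^F_\phi(\Omega_i)\Big)^{1/2}=\mu^F_\psi(\Rdd)^{1/2}\,\mu^F_\phi(\Rdd)^{1/2}.
\]
Because $F(\Rdd)=I$, the right-hand side equals $\|\psi\|_{L^2}\|\phi\|_{L^2}$, so the total variation of $\mu^F_{\psi,\phi}$ is bounded by $\|\psi\|_{L^2}\|\phi\|_{L^2}$, and the claimed bound on $|B_f(\psi,\phi)|$ follows from $\big|\iint f\,d\mu^F_{\psi,\phi}\big|\le \|f\|_{L^\infty}\,\|\mu^F_{\psi,\phi}\|_{TV}$.

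With $B_f$ shown to be a bounded sesquilinear form on $\HS$, the standard Riesz-type representation theorem for such forms on a Hilbert space produces a unique $A_f\in\bo$ satisfying $\inner{A_f\psi}{\phi}=B_f(\psi,\phi)$ for all $\psi,\phi\in\HS$. Restricting to $\phi=\psi$ gives $\inner{A_f\psi}{\psi}=\iint_{\Rdd}f\,d\mu^F_\psi$, which is the assertion. Uniqueness is immediate: on the complex Hilbert space $\HS$ the quadratic form determines the operator, so if $\inner{A\psi}{\psi}=\inner{A'\psi}{\psi}$ for all $\psi$ then polarization forces $A=A'$. I expect the only genuine work to lie in the total-variation estimate—verifying the operator Cauchy--Schwarz inequality for the positive forms $\inner{F(\Omega_i)\cdot}{\cdot}$ and combining the partition sums—while the passage from the bounded form to the operator $A_f$ is routine.
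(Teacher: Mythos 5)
Your argument is correct. The paper itself gives no proof of this lemma --- it simply cites Berberian's \emph{Notes on Spectral Theory} [Sec.~5, Thm.~9] --- and the route you take (polarize the positive measures $\mu^F_\psi$ to obtain finite complex measures $\mu^F_{\psi,\phi}$, bound their total variation by $\|\psi\|_{L^2}\|\phi\|_{L^2}$ via the operator Cauchy--Schwarz inequality on each cell of a partition, and then represent the resulting bounded sesquilinear form $B_f$ by a unique $A_f\in\bo$) is essentially the standard proof found in that reference, so you have in effect supplied the omitted argument rather than a different one.
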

  We denote the operator $A_f$ by $\iint_{\Rdd}f(z) dF$. For $f\in L^{\infty}(\Rdd)$ and $\Omega \subset \Rdd$, we define $\iint_{\Omega}f \ dF:=\iint_{\Rdd} \chi_{\Omega}f \ dF$. It is easily seen that $\iint_{\Omega}\ dF=F(\Omega)$.

\section{The time-frequency concentration of the spreading function}
When considering a filter $H$, it is often of interest to determine the amount of "spreading" in time and frequency that $H$ performs on a signal. By proposition \ref{prop:fwspreading}, the Fourier-Wigner function $\F_W(H)$ is, up to a phase factor, the spreading function of $H$. Hence the Fourier-Wigner transform $\F_W(H)(z)$ is the weight of the time-frequency shift $\pi(z)$ when $H$ is decomposed as a linear combination of time-frequency shifts:
\begin{equation*}
  H=\iint_{\Rdd} e^{i\pi x \cdot \omega} \F_W(H)(z) \pi(z) \ dz,
\end{equation*}
where the integral is interpreted in the sense of section \ref{sec:weakint} for $H\in \tco$. 
For instance, an operator which only shifts signals slightly in time and frequency will have a spreading function concentrated around $0$ in $\R^{2d}$. \\ 

To measure the effect of $H$ on a signal, we would therefore like to obtain bounds on the concentration of the spreading function, or equivalently of $\F_W(H)$.  In fact, the Hausdorff Young inequality in proposition \ref{prop:hausyoung} does exactly this. By this inequality, if $1\leq p \leq 2$ and $\frac{1}{p}+\frac{1}{q}=1$, then if $H\in \SC^p$ we get

  	\begin{equation} \label{eq:hausyoung}
 \left(\iint_{\R^{2d}} |\F_W(H)|^q \ dz \right)^{1/q} \leq \|H\|_{\SC^p}.
\end{equation}
Hence we can interpret the Hausdorff Young inequality as saying that the Schatten class norm of $H$ provides information on the concentration of the spreading function of $H$. If $H$ is trace class, then the above inequality holds for all $2\leq q <\infty$, and we may replace $\|H\|_{\SC^p}$ by $\|H\|_{\tco}$, since $\|H\|_{\SC^p}\leq \|H\|_{\tco}$ for any $p\geq 1$. 

\begin{rem}
 Since the Fourier-Wigner transform is unitary from $\SC^2$ to $L^2(\R^{2d})$ \cite{Luef:2017vs}, we actually have an equality in equation \eqref{eq:hausyoung} for $p=q=2$.
\end{rem}

Following the reasoning used by Gr\"ochenig to prove an uncertainty principle for functions in \cite[Thm. 3.3.3.]{Grochenig:2001}, we can use corollary \ref{cor:generalizedlieb} to obtain an uncertainty principle for spreading functions of filters.
\begin{thm}
Let $S\in \tco$ and let $\Omega\subset \R^{2d}$ with $\mu(\Omega)<\infty$ 	and assume that
	\begin{equation*}
  \iint_{\Omega} |\F_W(S)(z)|^2 \ dz \geq 1-\epsilon
\end{equation*}
for some $\epsilon \geq 0$. For any $p>2$ we then have
\begin{equation*}
  \mu(\Omega)\geq \frac{(1-\epsilon)^{p/(p-2)} \left(\frac{p}{2}\right)^{2d/(p-2)}}{\|S\|_{\tco}^{2p/(p-2)}}.
\end{equation*}
In particular, for $p=4$ we obtain
\begin{equation*}
  \mu(\Omega)\geq \frac{(1-\epsilon)^{2} 2^d}{\|S\|^4_{\tco}}.
\end{equation*}	
\end{thm}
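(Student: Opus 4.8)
The plan is to follow Gr\"ochenig's uncertainty argument by combining H\"older's inequality with the generalized Lieb inequality of Corollary~\ref{cor:generalizedlieb}. First I would rewrite the hypothesis as
\[
1-\epsilon \leq \iint_{\Rdd} |\F_W(S)(z)|^2 \chi_{\Omega}(z) \, dz,
\]
and then apply H\"older's inequality to the product $|\F_W(S)|^2 \cdot \chi_{\Omega}$. For a fixed $p>2$ the natural exponents are $r=p/2$ (acting on $|\F_W(S)|^2$) and its conjugate $r'=p/(p-2)$ (acting on $\chi_{\Omega}$). Since $\chi_{\Omega}$ is an indicator function we have $\iint_{\Rdd} \chi_{\Omega}^{r'} \, dz = \mu(\Omega)$, so H\"older yields
\[
1-\epsilon \leq \|\F_W(S)\|_{L^p}^2 \, \mu(\Omega)^{(p-2)/p}.
\]

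Next I would invoke Corollary~\ref{cor:generalizedlieb}, which gives $\|\F_W(S)\|_{L^p} \leq (2/p)^{d/p} \|S\|_{\tco}$ for $p \geq 2$; squaring produces $\|\F_W(S)\|_{L^p}^2 \leq (2/p)^{2d/p} \|S\|_{\tco}^2$. Substituting this bound into the previous display and isolating the factor $\mu(\Omega)^{(p-2)/p}$ gives
\[
\mu(\Omega)^{(p-2)/p} \geq \frac{1-\epsilon}{(2/p)^{2d/p} \|S\|_{\tco}^2}.
\]
Raising both sides to the power $p/(p-2)$ and simplifying $(2/p)^{-2d/(p-2)} = (p/2)^{2d/(p-2)}$ then reproduces exactly the claimed lower bound for $\mu(\Omega)$.

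The argument is essentially bookkeeping of exponents, so I do not expect a deep obstacle; the points that require care are verifying that $r'=p/(p-2)$ is the correct conjugate exponent for $r=p/2$ (which forces $p>2$, matching the hypothesis), that the inequality direction is preserved when dividing by the nonnegative quantities and raising to the positive power $p/(p-2)$, and that degenerate cases are consistent (for instance $\|S\|_{\tco}=0$ forces $\F_W(S)=0$, whence $1-\epsilon \leq 0$ and the bound is vacuous). Finally, the stated $p=4$ specialization follows by direct substitution: then $p/(p-2)=2$, $2d/(p-2)=d$, and $2p/(p-2)=4$, which yields $\mu(\Omega) \geq (1-\epsilon)^2 2^d / \|S\|_{\tco}^4$.
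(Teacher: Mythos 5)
Your proposal is correct and follows essentially the same route as the paper: H\"older's inequality with exponents $p/2$ and $p/(p-2)$ applied to $|\F_W(S)|^2\chi_\Omega$, followed by Corollary \ref{cor:generalizedlieb} and a rearrangement of exponents. The additional remarks on the conjugate-exponent check and the degenerate case $\|S\|_{\tco}=0$ are sound but not needed beyond what the paper records.
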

\begin{proof}
	By H\"older's inequality with $p'=\frac{p}{2}$ and $q'=\frac{p}{p-2}$, we find that
\begin{align*}
  1-\epsilon&\leq  \iint_{\Omega} |\F_W(S)(z)|^2 \ dz\\
  &\leq \left(\iint_{\R^{2d}} |\F_W(S)|^{2\frac{p}{2}} \ dz \right)^{2/p} \left(\iint_{\R^{2d}} \chi_{\Omega}(z)^{\frac{p}{p-2}} \ dz\right)^{(p-2)/p} \\
  &\leq \left(\frac{2}{p}\right)^{2d/p}\|S\|_{\tco}^2 \mu(\Omega)^{(p-2)/p},
\end{align*}
where the last inequality follows from corollary \ref{cor:generalizedlieb}. Rearranging this inequality, we obtain 
\begin{equation*}
  \mu(\Omega)\geq \frac{(1-\epsilon)^{p/(p-2)} \left(\frac{p}{2}\right)^{2d/(p-2)}}{\|S\|_{\tco}^{2p/(p-2)}}.
\end{equation*}

\end{proof}
One interpretation of this uncertainty principle is that a well-concentrated spreading function comes at the cost of a large trace class norm. As an example, consider the special case of an \textit{underspread} trace class operator $S$, meaning that the support of $S$ is contained in some bounded subset $\Omega\subset \R^{2d}$ with $\mu(\Omega)<<1$ \cite{Kozek:1997us}. Assume that $S$ is normalized in the sense that $\|S\|_{\SC^2}=\iint_{\R^{2d}} |\F_W(S)|^2 \ dz =1$. By assumption we then have
\begin{equation*}
  \iint_{\Omega} |\F_W(S)|^2 \ dz = 1,
\end{equation*}
and by the previous result with $\epsilon=0$ we conclude that 
\begin{equation*}
   1>>\mu(\Omega)\geq \frac{2^d}{\|S\|^4_{\tco}},
\end{equation*}
hence $\|S\|_{\tco}>>1$.

\section{Multiwindow STFT-filters are convolutions} \label{sec:rigour}
One aim of the recent paper \cite{Luef:2017vs} was to apply Werner's theory of convolutions to localization operators (or STFT-filters \cite{Feichtinger:2001,Kozek:1997}) using the identity
\begin{equation*}
    \mathcal{A}_f^{\varphi_1,\varphi_2} = f\star (\varphi_2 \otimes \varphi_1).
\end{equation*}
There are several advantages to this approach. Proposition \ref{prop:convschatten} provides a simple relationship between the properties of the mask $f$ and the operator  $\mathcal{A}_f^{\varphi_1,\varphi_2}$, the Fourier-Wigner transform is a useful tool for considering the Weyl symbol of $\mathcal{A}_f^{\varphi_1,\varphi_2}$ and the Tauberian theorem (theorem \ref{thm:wiener}) is a powerful tool to deduce new insights into localization operators. We will now show that multiwindow STFT-filters also allow a description in terms of convolutions.

In section \ref{sec:filters}, a multiwindow STFT-filter $H$ was defined as a linear combination of localization operators with a fixed mask $f$:
\begin{equation*}
    H=\sum_{n=1}^N \lambda_n \mathcal{A}_{f}^{\varphi_{n,1},\varphi_{n,2}}
\end{equation*}
for some sequence $\{\lambda_n\}_{n\in \N}$ in $\C$ and sequences $\{\varphi_{n,1}\}_{n\in \N}$ and $\{\varphi_{n,2}\}_{n\in \N}$ in $L^2(\R^{d})$.
Since any $\mathcal{A}_{f}^{\varphi_{n,1},\varphi_{n,2}}$ may be written as the convolution $f\star (\varphi_{n,2}\otimes \varphi_{n,1})$, we get by the linearity of convolutions that 
\begin{equation*}
  H=f\star \sum_{n=1}^N \lambda_n \varphi_{n,2}\otimes \varphi_{n,1}.
\end{equation*}
Hence $H$ is the convolution of $f$ with the operator $\sum_{n=1}^N \lambda_n \varphi_{n,2}\otimes \varphi_{n,1}$. When $N$ is finite, the sum $\sum_{n=1}^N \lambda_n \varphi_{n,2}\otimes \varphi_{n,1}$ is always a trace class operator, so by proposition \ref{prop:convschatten} we may pick the mask $f\in L^p(\R^{2d})$ for any $1\leq p \leq \infty$ and obtain a bounded operator. However, if follow Hlawatsch and Kozek \cite{Hlawatsch:1992} and introduce infinite linear combinations of localization operators, both the properties of the mask $f$ and convergence must be considered more carefully. 

\begin{prop} \label{prop:rigorous}
	Fix $1\leq p,q,r \leq \infty$ such that $\frac{1}{p}+\frac{1}{q}=1+\frac{1}{r}$. Let $\{\varphi_{n,1}\}_{n\in \N}$ and $\{\varphi_{n,2}\}_{n\in \N}$ be two orthonormal sequences in $L^2(\R^{d})$.  
	\begin{enumerate}
		\item If $\{\lambda_n\}_{n\in \N}\in \ell^p$ and $f\in L^q(\R^{2d})$, then the sum defining the multiwindow STFT-filter $ \sum_{n=1}^{\infty} \lambda_n \mathcal{A}_f^{\varphi_{n,1},\varphi_{n,2}}$ converges in $\SC^r$. Furthermore,
		\begin{equation*}
  \sum_{n=1}^{\infty} \lambda_n \mathcal{A}_f^{\varphi_{n,1},\varphi_{n,2}}=f\star \sum_{n=1}^{\infty} \lambda_n \varphi_{2,n}\otimes \varphi_{1,n}.
\end{equation*}
 
\item Conversely, any operator of the form $f\star S\in \SC^r$ for $f\in L^q(\R^{2d})$ and $S\in \SC^p$ can be written as a multiwindow STFT-filter with mask $f$. That is, there exists some sequence $\{\lambda_n\}_{n\in \N}\in \ell^p$ of positive numbers and $\{\varphi_{n,1}^{\prime}\}_{n\in \N}$, $\{\varphi_{n,2}^{\prime}\}_{n\in \N}$ two orthonormal sequences in $L^2(\R^{d})$ such that
\begin{equation*}
  f\star S=\sum_{n=1}^{\infty} \lambda_n \mathcal{A}_f^{\varphi^{\prime}_{n,1}, \varphi^{\prime}_{n,2}}
\end{equation*}
where the sum converges in $\SC^r$.
	\end{enumerate}
\end{prop}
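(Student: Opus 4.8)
The plan is to treat part (1) as the substantial step and then derive part (2) as a quick corollary via the singular value decomposition. For part (1), the first task is to check that the operator $S:=\sum_{n=1}^{\infty}\lambda_n\,\varphi_{2,n}\otimes\varphi_{1,n}$ is a well-defined element of $\SC^p$ with $\|S\|_{\SC^p}=\|\{\lambda_n\}\|_{\ell^p}$. The key point is that, because $\{\varphi_{n,1}\}$ and $\{\varphi_{n,2}\}$ are orthonormal, the rank-one terms assemble into a genuine singular value decomposition. Concretely, using $(\xi\otimes\eta)^*=\eta\otimes\xi$ together with the orthonormality relation $(\varphi_{1,m}\otimes\varphi_{2,m})(\varphi_{2,n}\otimes\varphi_{1,n})=\delta_{mn}\,\varphi_{1,n}\otimes\varphi_{1,n}$, one computes $S^*S=\sum_n|\lambda_n|^2\,\varphi_{1,n}\otimes\varphi_{1,n}$, hence $|S|=\sum_n|\lambda_n|\,\varphi_{1,n}\otimes\varphi_{1,n}$, so the singular values of $S$ are exactly $\{|\lambda_n|\}$. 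The same computation applied to the tail shows the partial sums $S_N=\sum_{n=1}^{N}\lambda_n\,\varphi_{2,n}\otimes\varphi_{1,n}$ satisfy $\|S-S_N\|_{\SC^p}^p=\sum_{n>N}|\lambda_n|^p\to 0$ for $p<\infty$, so $S_N\to S$ in $\SC^p$ and $S\in\SC^p$.

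With $S$ in hand I would invoke Lemma \ref{lem:locasconv}, which applies since the windows are unit vectors, to write each $\mathcal{A}_f^{\varphi_{n,1},\varphi_{n,2}}=f\star(\varphi_{2,n}\otimes\varphi_{1,n})$. By bilinearity of the convolution this gives $\sum_{n=1}^{N}\lambda_n\,\mathcal{A}_f^{\varphi_{n,1},\varphi_{n,2}}=f\star S_N$. Now Proposition \ref{prop:convschatten}, applied with the function $f\in L^q$ and the operator argument in $\SC^p$ (the relation $\frac1p+\frac1q=1+\frac1r$ is symmetric in $p$ and $q$), shows that $T\mapsto f\star T$ is a bounded linear map $\SC^p\to\SC^r$ with $\|f\star T\|_{\SC^r}\leq\|f\|_{L^q}\|T\|_{\SC^p}$. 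Continuity of this map transports $S_N\to S$ in $\SC^p$ into $f\star S_N\to f\star S$ in $\SC^r$, which is precisely the statement that $\sum_{n=1}^{\infty}\lambda_n\,\mathcal{A}_f^{\varphi_{n,1},\varphi_{n,2}}$ converges in $\SC^r$ to $f\star S$.

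For part (2), given $f\star S\in\SC^r$ with $f\in L^q$ and $S\in\SC^p$, I would apply the singular value decomposition of Proposition \ref{prop:singval} to write $S=\sum_n s_n(S)\,\psi_n\otimes\phi_n$ with $\{\psi_n\}$ and $\{\phi_n\}$ orthonormal and $\{s_n(S)\}\in\ell^p$ since $S\in\SC^p$. Setting $\lambda_n:=s_n(S)\geq 0$, $\varphi_{n,1}':=\phi_n$ and $\varphi_{n,2}':=\psi_n$ places us exactly in the hypotheses of part (1), which then yields both the $\SC^r$-convergence and the identity $f\star S=\sum_n\lambda_n\,\mathcal{A}_f^{\varphi_{n,1}',\varphi_{n,2}'}$, as required.

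The only genuine obstacle lies in the orthonormality computation behind part (1): one must verify that the operators $\varphi_{2,n}\otimes\varphi_{1,n}$ form a true singular value decomposition of $S$, so that the $\SC^p$-norm collapses to $\|\{\lambda_n\}\|_{\ell^p}$; without orthonormality the singular values of $S$ need not equal $\{|\lambda_n|\}$ and the clean norm identity fails. A minor point worth flagging is the case $p=\infty$ (which forces $q=1$, $r=\infty$): here convergence of $S_N$ to $S$ in $\SC^{\infty}=\bo$ requires $|\lambda_n|\to 0$, so for a general bounded sequence the series converges only in a weaker operator topology and the norm-convergence statement is cleanest for $p<\infty$.
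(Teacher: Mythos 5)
Your proof is correct and follows essentially the same route as the paper: identify $\sum_n\lambda_n\,\varphi_{2,n}\otimes\varphi_{1,n}$ as an element of $\SC^p$ via the singular value decomposition, pull the sum through the convolution using the continuity bound of Proposition \ref{prop:convschatten}, and obtain part (2) from part (1) by decomposing $S$. Your explicit computation of $S^*S$ (justifying that the $|\lambda_n|$ really are the singular values) and your caveat about norm convergence when $p=\infty$ are details the paper leaves implicit, but they do not change the argument.
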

\begin{proof}
	\begin{enumerate}
		\item The sum $\sum_{n=1}^{\infty} \lambda_n \varphi_{2,n}\otimes \varphi_{1,n}$ converges in the norm of $\SC^p$ to an operator in $\SC^p$ -- this follows from the definition of $\SC^p$ as those operators with singular values in $\ell^p$. By proposition \ref{prop:convschatten} the convolution $(h,S)\mapsto h\star S$ is continuous from $L^q(\R^{2d})\times \SC^p$ into $\SC^r$, and we may write
		\begin{align*}
  \sum_{n=1}^{\infty} \lambda_n \mathcal{A}_f^{\varphi_{n,1},\varphi_{n,2}}&= \sum_{n=1}^{\infty} \lambda_n f \star  (\varphi_{2,n}\otimes \varphi_{1,n}) \\
  &=f\star \sum_{n=1}^{\infty} \lambda_n   (\varphi_{2,n}\otimes \varphi_{1,n}),
\end{align*}
where continuity considerations were used in the last step.
\item $S$ has a singular value decomposition
\begin{equation*}
  S=\sum_{n=1}^{\infty} \lambda_n \varphi^{\prime}_{2,n}\otimes \varphi^{\prime}_{1,n}
\end{equation*}
converging in the norm of $\SC^p$, with $\{\lambda_n\}_{n\in \N}\in \ell^p$ and $\{\varphi_{n,1}^{\prime}\}_{n\in \N}$, $\{\varphi_{n,2}^{\prime}\}_{n\in \N}$ two orthonormal sequences in $L^2(\R^{d})$. By the continuity properties of the convolutions, we can write
\begin{align*}
  f\star S &= f\star \sum_{n=1}^{\infty} \lambda_n \varphi^{\prime}_{2,n}\otimes \varphi^{\prime}_{1,n} \\
  &= \sum_{n=1}^{\infty} \lambda_n f\star  (\varphi^{\prime}_{2,n}\otimes \varphi^{\prime}_{1,n}) \\
  &=\sum_{n=1}^{\infty} \lambda_n \mathcal{A}_f^{\varphi^{\prime}_{n,1}, \varphi^{\prime}_{n,2}}.
\end{align*}

	\end{enumerate}
\end{proof}
\begin{rem}
	The setting in \cite{Hlawatsch:1992} consisted of a square-summable sequence $\{\lambda_n\}_{n\in \mathbb{N}}\in \ell^2$ and a mask $f$ with unspecified properties. The above proposition makes the relationship between properties of $\{\lambda_n\}_{n\in \mathbb{N}}$, $f$ and the multi-window STFT-filter more transparent, showing how properties of $\{\lambda_n\}_{n\in \mathbb{N}}$ and $f$ are reflected in Schatten class properties of the multi-window STFT-filter. In particular the proposition gives conditions on $\{\lambda_n\}_{n\in \mathbb{N}}$ and $f$ to guarantee that the filter is a well-defined bounded operator, analogous to the conditions for the convolutions of two functions to be well-defined by Young's inequality. 
\end{rem}

\begin{rem}
\begin{enumerate}
	\item By proposition \ref{prop:banachmod} \textit{any} operator $H\in \SC^p$ for $1\leq p <\infty$ can be written in the form $H=f \star S$ for $f\in L^1(\R^{2d})$ and $S\in \SC^p$. With this in mind, the study of multiwindow STFT-filters is the study of the Schatten classes $\SC^p$ from a certain perspective.
	\item By proposition \ref{prop:schwartzoperators}, one might also define multiwindow STFT-filters $f\star S$ when $f\in \mathcal{S}(\Rdd)$ and $S\in \tempdist$, or when $f\in \mathcal{S}^{\prime}(\Rdd)$ and $S\in \schwartz$. 
\end{enumerate}
\end{rem}

\subsection{The Fourier-Wigner transform and multiwindow STFT-filters}

In \cite{Kozek:1992}, Kozek studied multiwindow STFT-filters by considering their Weyl symbols. One advantage from writing multiwindow STFT-filters using convolutions is that the relationship between such filters and their Weyl symbol becomes the relationship between convolutions and Fourier transforms. 
\begin{prop} \label{prop:weylsymbolofmultiwindow}
	Let $S\in \tco$ and $f\in \Ldd$. The Weyl symbol $a_{f\star S}$ of the multiwindow STFT $f\star S$ is given by
  $f\ast a_S$, where $a_S$ is the Weyl symbol of $S$.
\end{prop}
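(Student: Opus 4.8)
The plan is to obtain the identity by chaining together the two transform identities already available, namely the description of the Weyl symbol via the Fourier--Wigner transform in Proposition \ref{prop:fwspreading}(2) and the interaction of $\F_W$ with convolution in Proposition \ref{prop:convolutionandft}(2). First I would note that $f\star S\in\tco$ (by the estimate $\|f\star S\|_{\tco}\le\|f\|_{L^1}\|S\|_{\tco}$), so Proposition \ref{prop:fwspreading}(2) applies to it and gives $a_{f\star S}=\F_\sigma\F_W(f\star S)$. Then Proposition \ref{prop:convolutionandft}(2) rewrites the Fourier--Wigner transform of the convolution as a product, $\F_W(f\star S)=\F_\sigma(f)\,\F_W(S)$, so that $a_{f\star S}=\F_\sigma\bigl(\F_\sigma(f)\,\F_W(S)\bigr)$. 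The whole content of the proposition is now to convert this symplectic Fourier transform of a product into the convolution $f\ast a_S$.

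For that step I would invoke the convolution theorem for the symplectic Fourier transform. Since $\F_\sigma$ is, up to a linear change of variables, an ordinary Fourier transform, it satisfies $\F_\sigma(g\ast h)=\F_\sigma(g)\,\F_\sigma(h)$; applying $\F_\sigma$ to both sides and using the involution property $\F_\sigma^2=I$ from \cite{deGosson:2011wq} yields the dual identity $\F_\sigma(gh)=\F_\sigma(g)\ast\F_\sigma(h)$. Taking $g=\F_\sigma(f)$ and $h=\F_W(S)$ and applying $\F_\sigma^2=I$ once more gives $\F_\sigma(\F_\sigma(f))=f$ and $\F_\sigma(\F_W(S))=a_S$ (the latter again by Proposition \ref{prop:fwspreading}(2)), whence $a_{f\star S}=f\ast a_S$, as claimed.

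The one point that requires genuine care, and which I expect to be the main obstacle, is the functional-analytic setting in which these formal manipulations are legitimate. The Weyl symbol $a_S$ of a trace-class operator is in general only a tempered distribution, and the intermediate factor $\F_\sigma(f)\,\F_W(S)$ need not be integrable, so the product, the convolution $f\ast a_S$, and the convolution theorem itself must all be read at the level of tempered distributions rather than $L^1$ functions. I would handle this by first verifying the chain of equalities for $f\in\mathcal{S}(\R^{2d})$ and $S\in\schwartz$, where every object is a Schwartz function and the classical convolution theorem applies verbatim, and then extending to $f\in\Ldd$ and $S\in\tco$ using the compatibility and continuity of the convolutions and of $\F_W$ guaranteed by Proposition \ref{prop:schwartzoperators}. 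With the distributional bookkeeping thus justified, the proof is just the substitution described above.
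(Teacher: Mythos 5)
Your proof is correct and follows essentially the same route as the paper: write $a_{f\star S}=\F_\sigma\F_W(f\star S)$, apply $\F_W(f\star S)=\F_\sigma(f)\F_W(S)$, and convert the product back to a convolution via $\F_\sigma(gh)=\F_\sigma(g)\ast\F_\sigma(h)$ and $\F_\sigma^2=I$. The only difference is your extra distributional care at the end, which is in fact unnecessary here: since $S\in\tco\subset\SC^2$, Pool's theorem gives $a_S\in L^2(\R^{2d})$, so the whole computation takes place in $L^1\ast L^2\subset L^2$ and the classical convolution theorem applies directly.
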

\begin{proof}
	By proposition \ref{prop:fwspreading}, $a_{f\star S}=\F_{\sigma}\F_W(f\star S)$. From proposition \ref{prop:convolutionandft} we know that $\F_W(f\star S)=\F_{\sigma}(f)\ast\F_W(S)$. Furthermore, we have the relation $\F_{\sigma}(gh)=\F_{\sigma}(g)\ast \F_{\sigma}(h)$ for $g,h \in \Ldd$; a fact that follows easily from the corresponding fact for the regular Fourier transform. Hence
	\begin{align*}
  a_{f\star S}=\F_{\sigma}\F_W(f\star S)&=\F_{\sigma}\left(\F_{\sigma}(f)\F_W(S)\right) \\
  &= f\ast \F_{\sigma}\F_W(S)=f\ast a_S,
\end{align*}
where we have used that $\F_{\sigma}$ is its own inverse.
\end{proof}
\begin{rem}
	Proposition 4.2 holds for more general $f$ and $S$, as long as the convolutions, $\F_{\sigma}$ and $\F_W$ are interpreted as their extensions to $\mathcal{S}^{\prime}(\Rdd)$ and $\tempdist$, respectively\cite{Keyl:2016}.
\end{rem}

Since the Weyl symbol of the operator $\varphi\otimes \varphi$ for $\varphi \in \HS$ is the Wigner function $W(\varphi,\varphi)$ \cite{Luef:2017vs}, we get in particular that the Weyl symbol $a_{\Omega}$ of a localization operator $\mathcal{A}_{\Omega}^\varphi =\chi_{\Omega}\star (\varphi\otimes \varphi)$ is 
\begin{equation*}
  a_{\Omega}=\chi_{\Omega}\ast W(\varphi,\varphi),
\end{equation*}
as is well-known \cite{Cordero:2003}.
\begin{rem}
	Consider $\varphi_1, \varphi_2 \in L^2(\R^d)$. By the same arguments as above we get that the Weyl symbol of the localization operator $f\star (\varphi_2 \otimes \varphi_1)$ is $f\ast W(\varphi_2,\varphi_1)$. When Kozek and Hlawatsch generalized from localization operators (or STFT-filters) $f\star (\varphi_2\otimes \varphi_1)$ to multiwindow STFT-filters $f\star S$ for $S\in \SC^2(\R^d)$ in \cite{Hlawatsch:1992}, they did so by considering the Weyl symbol $f\ast W(\varphi_2,\varphi_1)$ of a localization operator, and replaced $W(\varphi_2,\varphi_1)$ with an arbitrary function $k$ in $L^2(\R^{2d})$. Hence they considered the operator with Weyl symbol $f\ast k$, which by proposition \ref{prop:weylsymbolofmultiwindow} is the operator $f\star L_k$, where $L_k$ is the Weyl transform of $k$. Since  $\SC^2(\R^d)$ is exactly the set of bounded operators with Weyl symbol in $L^2(\R^{2d})$ \cite{Pool:1966}, the set of operators $f\star L_k$ for $k\in L^2(\R^{2d})$ equals the set of operators $f\star S$ for $S\in \SC^2(\Rd)$.
\end{rem}
\subsection{Density of multiwindow STFT-filters and uniqueness of masks} 
We will now fix an operator $S\in \tco$, and consider the corresponding set of multiwindow STFT-filters $f\star S$ for functions $f$. Using the Tauberian theorem for convolutions with operators (theorem \ref{thm:wiener}), we will be able to answer two questions about this set of filters. First, we ask whether any operator $T$ may be approximated by operators of the form $f\star S$, where $T$ belongs some specified Schatten $p$-class of operators. We then ask whether the mask $f$ is uniquely determined by the operator $f\star S$. 
\begin{prop} \label{prop:densityandreconstruction}
	Let $S\in \tco$. The following are equivalent.
	\begin{enumerate}
		\item The set $\{z\in \R^{2d}:\F_W(S)=0\}$ is empty.
		\item The set of multiwindow STFT-filters $ f\star S$ with $f\in L^1(\Rdd)$ is dense in $\SC^1$.
		\item Any mask $f \in L^{\infty}(\mathbb{R}^{2d})$ is uniquely determined by the multiwindow STFT-filter $f\star S$.
	\end{enumerate}
\end{prop}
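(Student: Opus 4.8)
The plan is to recognize that the three conditions are nothing but reformulations of the equivalent statements (a1), (a3) and (a2) of Theorem~\ref{thm:wiener}(a), so the entire proposition follows once these translations are made explicit. No new analytic input is needed; all the substance lives in the Tauberian theorem, which is already at our disposal.

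First, condition (1) is verbatim statement (a1). For condition (2), I would invoke Proposition~\ref{prop:rigorous}: the multiwindow STFT-filters with mask $f\in\Ldd$ are precisely the operators of the form $f\star S$, so the set appearing in (2) is exactly $\Ldd\star S$. Thus condition (2) coincides with statement (a3), and the equivalence $(1)\Leftrightarrow(2)$ is the equivalence $(a1)\Leftrightarrow(a3)$ of Theorem~\ref{thm:wiener}(a).

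For condition (3), the step requiring the most care is to unwind what ``the mask is uniquely determined by $f\star S$'' means. The assignment $f\mapsto f\star S$ is a linear map from $L^\infty(\Rdd)$ into $\bo$, defined through the duality \eqref{eq:convolutionsofbounded}; saying that the mask is uniquely determined by the filter $f\star S$ is exactly saying that this map is injective. By linearity, injectivity is equivalent to the assertion that $f\star S=0$ forces $f=0$, which is statement (a2). Hence $(1)\Leftrightarrow(3)$ is the equivalence $(a1)\Leftrightarrow(a2)$, completing the chain.

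I do not expect a genuine obstacle here, since the hard work is encapsulated in Theorem~\ref{thm:wiener}. The only two points needing attention are the identification of the family of multiwindow STFT-filters with the convolution set $\Ldd\star S$ (immediate from Proposition~\ref{prop:rigorous}) and the elementary reduction of the uniqueness statement in (3) to triviality of the kernel of the linear map $f\mapsto f\star S$; the latter is precisely where one uses the bilinearity of Werner's convolution.
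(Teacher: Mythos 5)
Your proposal is correct and matches the paper's own proof, which likewise disposes of the proposition by observing that conditions (1), (2) and (3) are restatements of (a1), (a3) and (a2) of Theorem~\ref{thm:wiener} in the language of multiwindow STFT-filters. Your extra care in reducing the uniqueness statement in (3) to injectivity, hence to triviality of the kernel of $f\mapsto f\star S$, only makes explicit what the paper leaves implicit.
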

\begin{proof}
	The result is simply a restatement of parts (a1), (a2) and (a3) of theorem \ref{thm:wiener} in the terminology of multiwindow STFT-filters.
\end{proof}
\begin{rem}
	Since the Weyl symbol of $S$ is $a_{S}=\F_{\sigma}\F_W(S)$, we see that $\F_W(S)=\F_{\sigma}a_S$. Hence part (1) of the result could equivalently have been formulated using the set of zeros of $\F_{\sigma}a_S$ -- the symplectic Fourier transform of the Weyl symbol of $S$.
\end{rem}
By relaxing the conditions on the set of zeros of the Fourier-Wigner transform of $S$, we obtain a result for Hilbert-Schmidt operators from theorem \ref{thm:wiener}.
\begin{prop} \label{prop:densityandreconstructionHS}
	Let $S\in \tco$. The following are equivalent.
	\begin{enumerate}
		\item The set $\{z\in \R^{2d}:\F_W(S)=0\}$ has Lebesgue measure zero.
		\item The set of multiwindow STFT-filters $ f\star S$ with $f\in L^2(\Rdd)$ is dense in $\SC^2$.
		\item Any mask $f \in L^2(\mathbb{R}^{2d})$ is uniquely determined by the multiwindow STFT-filter $f\star S$.
	\end{enumerate}
\end{prop}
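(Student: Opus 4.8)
The plan is to recognize this proposition as a direct restatement of part (b) of Theorem \ref{thm:wiener}, exactly parallel to how Proposition \ref{prop:densityandreconstruction} restates part (a). The three conditions listed here correspond, respectively, to the statements (b1), (b3) and (b2) of that theorem, so the whole argument reduces to translating each condition into the language of multiwindow STFT-filters.

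First I would dispatch condition (1), which is literally identical to (b1): the zero set of $\F_W(S)$ has Lebesgue measure zero. Next, for condition (2) I would invoke Proposition \ref{prop:rigorous}, which identifies the collection of multiwindow STFT-filters $f\star S$ with $f\in L^2(\R^{2d})$ as precisely the set $L^2(\R^{2d})\star S$; density of this set in $\SC^2$ is then exactly statement (b3).

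The only correspondence requiring a short argument is condition (3). Here I would use that the map $f\mapsto f\star S$ is linear, a consequence of the bilinearity of Werner's convolution recorded in Section \ref{sec:werner}. Uniqueness of the mask, that is, the assertion that $f\star S$ determines $f$, is equivalent to injectivity of this linear map, which in turn is equivalent to its kernel being trivial: $f\star S=0$ implies $f=0$. This is precisely statement (b2). With the three correspondences in place, the equivalence of (1), (2) and (3) follows at once from the equivalence of (b1), (b2) and (b3) in Theorem \ref{thm:wiener}.

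There is no real obstacle here, since all the substance is already contained in Theorem \ref{thm:wiener} and Proposition \ref{prop:rigorous}. The only points demanding any care are to justify the reformulation of ``uniqueness of the mask'' as triviality of the kernel by appealing to linearity rather than taking it for granted, and to confirm through Proposition \ref{prop:rigorous} that the $L^2$ masks generate the same family of operators as the abstract convolution set $L^2(\R^{2d})\star S$.
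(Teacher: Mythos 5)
Your proposal is correct and follows exactly the paper's route: the paper proves this proposition (like Proposition \ref{prop:densityandreconstruction} before it) simply by observing that it is a restatement of parts (b1), (b2) and (b3) of Theorem \ref{thm:wiener} in the language of multiwindow STFT-filters. Your extra care in spelling out the linearity argument behind ``uniqueness of the mask $\Leftrightarrow$ trivial kernel'' and the identification of the filters with $L^2(\R^{2d})\star S$ is a harmless elaboration of what the paper leaves implicit.
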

With an even weaker assumption on the zeros of $\F_W(S)$, theorem \ref{thm:wiener} gives yet another result.
\begin{prop} \label{prop:densityandreconstruction2}
	Let $S\in \tco$. The following are equivalent.
	\begin{enumerate}
		\item The set $\{z\in \R^{2d}:\F_W(S)=0\}$ has dense complement in $\Rdd$.
		\item The set of multiwindow STFT-filters $ f\star S$ with $f\in L^{\infty}(\Rdd)$ is weak*-dense in $\bo$.
		\item Any mask $f \in L^{1}(\mathbb{R}^{2d})$ is uniquely determined by the multiwindow STFT-filter $f\star S$.
	\end{enumerate}
\end{prop}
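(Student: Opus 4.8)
The plan is to recognize that this proposition, exactly like its two predecessors (Propositions \ref{prop:densityandreconstruction} and \ref{prop:densityandreconstructionHS}), is a direct translation of a single branch of the Tauberian theorem (Theorem \ref{thm:wiener}) into the vocabulary of multiwindow STFT-filters. Concretely, I would match the three conditions of the proposition against the four equivalent statements in part (c) of Theorem \ref{thm:wiener}: condition (1) is verbatim the statement (c1); condition (2) should correspond to (c3), the weak*-density of $L^{\infty}(\Rdd)\star S$ in $\bo$; and condition (3), the uniqueness of the mask, should correspond to the injectivity statement (c2). Once these three identifications are in place, the equivalences asserted in Theorem \ref{thm:wiener}(c) deliver the proposition immediately.

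The two identifications that require a word of justification are as follows. For (2)$\Leftrightarrow$(c3), I would invoke Proposition \ref{prop:rigorous}: for $f\in L^{\infty}(\Rdd)$ and $S\in\tco$, the operator $f\star S$ is precisely a multiwindow STFT-filter with mask $f$, and conversely every such filter is of this form, so the set of multiwindow STFT-filters $\{f\star S : f\in L^{\infty}(\Rdd)\}$ literally equals $L^{\infty}(\Rdd)\star S$, making (2) and (c3) the same statement. For (3)$\Leftrightarrow$(c2), I would use that the map $f\mapsto f\star S$ is linear (bilinearity of Werner's convolution): asserting that a mask $f\in L^{1}(\Rdd)$ is uniquely determined by $f\star S$ means that $f\star S=g\star S$ forces $f=g$ for $f,g\in L^{1}(\Rdd)$, which by linearity is equivalent to the implication $f\star S=0\Rightarrow f=0$, i.e. precisely (c2).

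I do not anticipate any genuine obstacle, since all the analytic content is already carried by Theorem \ref{thm:wiener}; the proof is purely a matter of aligning terminology. The only step that is not an outright tautology is the passage in (3)$\Leftrightarrow$(c2) from ``uniquely determined'' to an injectivity statement, which rests on the linearity of $f\mapsto f\star S$ noted above, and this is the point I would state explicitly before concluding with the chain of equivalences from Theorem \ref{thm:wiener}(c).
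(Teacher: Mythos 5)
Your proposal is correct and is exactly the paper's approach: the proposition is presented there as an immediate restatement of part (c) of Theorem \ref{thm:wiener} in the language of multiwindow STFT-filters, with (1), (2), (3) matching (c1), (c3), (c2) respectively. Your explicit remark that ``uniquely determined'' reduces to the injectivity statement (c2) via linearity of $f\mapsto f\star S$ is a correct and welcome elaboration of a step the paper leaves implicit.
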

If we pick $S=\varphi_2\otimes \varphi_1$ for $\varphi_1,\varphi_2 \in \HS$ in the three previous propositions, the conditions on the set of zeros of $\F_W(S)$ becomes a condition on the zeros of the ambiguity function $A(\varphi_2,\varphi_1)$. We noted this in \cite{Luef:2017vs}, where we generalized previous results from \cite{Bayer:2014td}. For such rank-one operators, proposition \ref{prop:densityandreconstruction} raises a natural question: Does there exist a pair of windows $\varphi_1,\varphi_2\in \HS$ such that $A(\varphi_2,\varphi_1)$ has no zeros, except when $\varphi_1=\varphi_2$ is a Gaussian? In the case where $\varphi_1=\varphi_2$ Hudson's theorem \cite{Grochenig:2001} requires $\varphi$ to be a Gaussian. Similarly, Toft \cite{Toft:2006} has shown that $V_{\varphi_1}\varphi_2$ can only be a positive function if $\varphi_1=\varphi_2$ is a Gaussian. However, the question of whether one may find $\varphi_1\neq\varphi_2$ such that $A(\varphi_2,\varphi_1)$ has no zeros remains open to the best of our knowledge.
 \begin{exmp} \label{exmp:reconstruction}
	Condition (1) of proposition \ref{prop:densityandreconstructionHS} is much weaker than the corresponding condition in proposition \ref{prop:densityandreconstruction}. It will for instance be satisfied by $S=h_n\otimes h_n$, where $h_n$ is the $n$'th Hermite function. In fact, $A(h_n,h_n)$ has a finite number of zeros, namely the zeros of some $n$'th Laguerre polynomials  \cite[p. 64]{Folland:1989}. 
\end{exmp}

\section{Mixed-state localization operators} \label{sec:generalizedlocalization}
Among the localization operators $\mathcal{A}_f^{\varphi_1,\varphi_2}$, those of the form $\mathcal{A}^\varphi_{\Omega}$ for some measurable $\Omega \subset \R^{2d}$ have a special interpretation: if $\psi\in L^2(\Rd)$, the signal $\mathcal{A}_{\Omega}^\varphi \psi$ is interpreted as the part of $\psi$ "living on" $\Omega$ \cite{Cordero:2003}, which explains the "localization" terminology. In section \ref{sec:rigour} we considered  multiwindow STFT-filters as a generalization of localization operators -- a natural question is then whether we can find some subset of the multiwindow STFT-filters where the "localization" interpretation above is still reasonable. We define a \textit{mixed-state localization operator} to be an operator $H_{\Omega}$ of the form
\begin{equation*}
  H_{\Omega}= \chi_{\Omega}\star S
\end{equation*}
where $\Omega \subset \Rdd$ is a measurable subset and $S$ is a positive trace class operator with $\tr(S)=1$. 

\begin{rem}
\begin{enumerate}
	\item The relationship between general localization operators $\mathcal{A}^{\varphi_1,\varphi_2}_f$ and those of the form $\mathcal{A}^{\varphi}_\Omega$ is the same as the relationship between multiwindow STFT-filters and mixed-state localization operators: A general localization operator may be written as $\mathcal{A}_{f}^{\varphi_1,\varphi_2}=f\star (\varphi_2\otimes \varphi_1)$, and the localization operators $\mathcal{A}_\Omega^\varphi$ are exactly those localization operators $f\star (\varphi_2\otimes \varphi_1)$ such that $f=\chi_{\Omega}$ for some $\Omega\subset \R^{2d}$ and $\varphi_2\otimes \varphi_1$ is a positive operator with $\tr(\varphi_2\otimes \varphi_1)=1$. This follows from the fact that $\varphi_2\otimes \varphi_1$ is positive if and only if $\varphi_1=\varphi_2$, and $\tr(\varphi_2\otimes \varphi_1)=\inner{\varphi_2}{\varphi_1}$.
	\item  In quantum mechanics, operators $\varphi \otimes \varphi$ with $\|\varphi\|_{L^2}=1$ describe \textit{pure states} of a system \cite{deGosson:2011wq}. More general states, the \textit{mixed states} are described by a positive operators $S\in \tco$ with $\tr(S)=1$. So a mixed-state localization operator is by definition given by the convolution of $\chi_\Omega$ with an operator describing a mixed state -- hence the name.
\end{enumerate}
\end{rem}

Given a mixed-state localization operator $\chi_{\Omega}\star S$, one might ask whether it is possible to recover information about the domain $\Omega$ from the operator $\chi_{\Omega}\star S$. The next proposition shows that the measure of $\Omega$ may be calculated from the eigenvalues $\chi_{\Omega}\star S$. In section \ref{sec:reconstructdomain} we will consider the problem of reconstructing the domain $\Omega$ in more detail.
\begin{prop} 
	Let $\Omega\subset \R^{2d}$ be a subset of finite Lebesgue measure, and let $S\in \tco$ be a positive operator with $\tr(S)=1$. Then 
	\begin{enumerate}
		\item $\tr(\chi_{\Omega}\star S)=\mu(\Omega)$, where $\mu$ is Lebesgue measure.
		\item If $\{\lambda_i\}_{i=1}^{N}$ are the eigenvalues of $\chi_{\Omega}\star S$ counted with algebraic multiplicity, then
		\begin{equation*}
  \sum_{i=1}^{N}\lambda_i=\mu(\Omega).
\end{equation*}
	\end{enumerate}
\end{prop}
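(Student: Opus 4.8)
The plan is to prove part (1) directly using Lemma \ref{lem:werner}, and then to derive part (2) from part (1) together with Lidskii's theorem. For part (1), I would first recall that by definition $\chi_\Omega \star S = \iint_{\R^{2d}} \chi_\Omega(y) \alpha_y(S) \, dy$, which lies in $\tco$ with the norm bound from Proposition \ref{prop:convschatten}; in particular the trace is well-defined. The key observation is that computing $\tr(\chi_\Omega \star S)$ amounts to pairing the operator $\chi_\Omega \star S$ with the identity operator. Using Proposition \ref{prop:positiveandidentity}(2), we know that $1 \star S = I$, so the constant function $1$ plays the role of the unit. The natural approach is to write the trace as a convolution of $\chi_\Omega$ with $S$ evaluated against $I$, and recognize that this reduces to an instance of Lemma \ref{lem:werner}.

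Concretely, I would compute
\begin{equation*}
  \tr(\chi_\Omega \star S) = \iint_{\R^{2d}} \chi_\Omega(y) \tr(\alpha_y(S)) \, dy.
\end{equation*}
Here I would justify moving the trace inside the integral: since the integral defining $\chi_\Omega \star S$ converges in $\tco$ (as $\chi_\Omega \in L^1$ because $\mu(\Omega) < \infty$), and the trace is a continuous linear functional on $\tco$, it commutes with the integral. Then the cyclicity and unitary invariance of the trace give $\tr(\alpha_y(S)) = \tr(\pi(y) S \pi(y)^*) = \tr(S) = 1$ for every $y$. Substituting this yields
\begin{equation*}
  \tr(\chi_\Omega \star S) = \iint_{\R^{2d}} \chi_\Omega(y) \, dy = \mu(\Omega),
\end{equation*}
which is part (1). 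Alternatively, one could phrase this as $\tr(\chi_\Omega \star S) = \tr((\chi_\Omega \star S) \cdot I) = \inner{\chi_\Omega \star S}{I}_{\tco, \SC^\infty}$ and invoke the duality relation \eqref{eq:convolutionsofbounded} together with $I \star S = 1$ and $\tr(S) = 1$, but the direct computation via Lemma \ref{lem:werner} is cleanest.

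For part (2), the work is already done by Lidskii's theorem, recalled in the excerpt: for any $S \in \tco$, the trace equals the sum of the eigenvalues counted with algebraic multiplicity. Since $\chi_\Omega \star S \in \tco$ (it is even positive by Proposition \ref{prop:positiveandidentity}(1), as $\chi_\Omega \geq 0$ and $S \geq 0$), Lidskii gives $\tr(\chi_\Omega \star S) = \sum_{i=1}^{N} \lambda_i$, and combining with part (1) yields $\sum_{i=1}^{N} \lambda_i = \mu(\Omega)$. I expect the only mild subtlety to be the justification of interchanging trace and integral in part (1); everything else is a direct application of the stated machinery, so there is no serious obstacle here.
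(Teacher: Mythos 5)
Your proof is correct, but for part (1) you take a genuinely different route from the paper. You compute the trace directly by pushing it through the integral defining $\chi_\Omega\star S$ and using $\tr(\alpha_y(S))=\tr(S)=1$; the paper instead evaluates the Fourier--Wigner transform of the convolution at the origin, using Proposition \ref{prop:convolutionandft} to get $\F_W(\chi_\Omega\star S)(0)=\F_\sigma(\chi_\Omega)(0)\,\F_W(S)(0)=\mu(\Omega)\tr(S)$, since $\F_W(T)(0)=\tr(T)$ and $\F_\sigma(f)(0)=\iint f$. The paper's route buys you a one-line proof with no interchange of limits to justify; your route is more elementary but its one subtle step is exactly the one you flag. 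Your justification of that step is acceptable but slightly informal: the integral is \emph{defined} weakly, so to commute the trace with it you should either note that $y\mapsto\alpha_y(S)$ is continuous in the $\tco$-norm with constant norm $\|S\|_{\tco}$, so that for $\chi_\Omega\in L^1$ the integral converges as a Bochner integral in $\tco$ and agrees with the weak one, or exploit positivity and apply Tonelli to $\sum_n\iint\chi_\Omega(y)\inner{\alpha_y(S)e_n}{e_n}\,dy$ for an orthonormal basis $\{e_n\}$. Your parenthetical alternative via the duality \eqref{eq:convolutionsofbounded} does not quite work as stated, since that relation pairs $f\star S$ against $T\in\tco$ and $I\notin\tco$; but this aside is not load-bearing. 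Part (2) is identical to the paper's: Lidskii plus part (1).
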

\begin{proof}
	\begin{enumerate}
		\item By proposition \ref{prop:convolutionandft}, we have that $\F_W(\chi_{\Omega}\star S)(0)=\F_{\sigma}(\chi_{\Omega})(0)\F_W(S)(0)$, and by the definitions of $\F_W$ and $\F_{\sigma}$ we we have that $\F_{\sigma}(\chi_{\Omega})(0)\F_W(S)(0)=\mu(\Omega)\tr(S)=\mu(\Omega)$. 
		\item This follows from the first part along with Lidskii's equality from section \ref{sec:schatten}. 
	\end{enumerate}
\end{proof}
\begin{rem}
\begin{enumerate}
\item The proof of this proposition would work equally well if $\chi_{\Omega}$ is replaced by any $f\in L^1(\R^{2d})$, as long as $\mu(\Omega)$ is replaced by $\iint_{\R^{2d}} f(z) \ dz$.
	\item 	This result holds in particular for the localization operators $\mathcal{A}_{f}^{\varphi}$ by picking $S=\varphi\otimes \varphi$. In this context it is well-known, see for instance \cite{Feichtinger:2001}. The proposition therefore supports the intuition that $\chi_{\Omega}\star S$ is a generalized localization operator.
\end{enumerate}
\end{rem}

\subsection{A characterization of mixed-state localization operators}
By our definition of mixed-state localization operators, a positive trace class operator $S$ with $\tr(S)=1$ assigns to each domain $\Omega \subset \Rdd$ a mixed-state localization operator $\chi_{\Omega}\star S$. In fact, $f\star S$ belongs to $\bo$ for any $f\in L^{\infty}(\Rdd)$ by proposition \ref{prop:convschatten}, and in this way $S$ defines a bounded, linear mapping from $L^{\infty}(\Rdd)$ to $\bo$. The next theorem, originally due to Holevo \cite{Holevo:1979}, characterizes all bounded linear mappings $L^{\infty}(\Rdd)\to \bo$ of this form in terms of four properties. We provide an outline of the proof in \cite{Werner:1984} in our notation for completeness. The details may also be found in proposition 1 and lemma 3 in \cite{Kiukas:2006} where the result is proved in a more general setting.
\begin{thm} \label{thm:poscorrule}
	Let $\Gamma:L^{\infty}(\R^{2d})\to \bo$ be a linear operator satisfying
	\begin{enumerate}
		\item $\Gamma(\chi_{\R^{2d}})=I$, where $I$ the identity operator,
		\item $\Gamma(T_zf)=\alpha_z(\Gamma(f))$ for any $z\in \R^{2d}$ and $f\in L^{\infty}(\R^{2d})$,
		\item $\Gamma(f)$ is a positive operator whenever $f$ is a positive function,
		\item $\Gamma$ is weak* to weak*-continuous.
	\end{enumerate}
	Then there exists a positive operator $S\in \SC^1(\R^d)$ with $\tr(S)=1$ such that 
	\begin{equation*}
  \Gamma(f)=f\star S
\end{equation*}
for any $f\in L^{\infty}(\R^{2d})$.
\end{thm}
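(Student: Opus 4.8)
The plan is to exhibit the operator $S$ explicitly via its Fourier-Wigner transform, using the fact that properties (1)--(4) translate into clean statements about the symplectic Fourier multiplier that $\Gamma$ induces. First I would observe that properties (2) and (4) say $\Gamma$ intertwines the translation action $T_z$ on $L^\infty$ with the operator shift $\alpha_z$, and is weak*-continuous. Since any $f\star S$ satisfies $\F_W(f\star S)=\F_\sigma(f)\F_W(S)$ by proposition \ref{prop:convolutionandft}, the natural strategy is to \emph{define} a candidate $S$ through $\F_W(S)$ and then verify $\Gamma(f)=f\star S$ on a dense set. Concretely, because $\Gamma$ is weak*-to-weak* continuous and covariant, testing $\Gamma(f)$ against rank-one operators $\psi\otimes\psi$ should produce, for each $\psi$, a bounded linear functional on $L^\infty$ that is translation-covariant; by the structure theory for such covariant maps (the content of Werner's argument) this functional is given by integration against a convolution kernel, which identifies the measure $\inner{\Gamma(\chi_\Omega)\psi}{\psi}$ as $(\chi_\Omega \star S)$-type quantities.

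The key steps, in order, would be: (a) use covariance (2) to reduce the determination of $\Gamma$ to its ``value at the origin'', i.e.\ to the functional $f\mapsto \inner{\Gamma(f)\varphi}{\varphi}$ and its behavior under shifts; (b) invoke weak*-continuity (4) to represent this functional by an $L^1$ or measure-valued object, so that $\Gamma(f)$ is realized as a vector-valued integral $\iint f(z)\,dF(z)$ against some covariant family of operators, exactly the POVM-type integration recalled in Lemma \ref{lem:povmintegration}; (c) use positivity (3) to guarantee that the generating operator is positive, and the normalization (1) together with Lemma \ref{lem:werner} (the identity $\iint \tr(S\alpha_z T)\,dz=\tr(S)\tr(T)$) and proposition \ref{prop:positiveandidentity} (which gives $1\star S=I$) to force $\tr(S)=1$; and finally (d) match the resulting representation with the convolution $f\star S$ by comparing Fourier-Wigner transforms, using that $\F_W$ is injective on the relevant class. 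The covariance condition is what pins down that the family $dF$ must be of the form $\alpha_z(\check S)\,dz$ for a single operator $S$, rather than a genuinely $z$-dependent family.

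The hard part will be step (b): producing the operator $S$ from the abstract functional. The difficulty is that weak*-continuity gives a \emph{predual} statement, $\Gamma$ is the adjoint of a map $\SC^1\to L^1$, and one must show this predual map has the form $T\mapsto \check S\star T$ for a fixed $S\in\SC^1$. This is where covariance does the essential work: the predual map must intertwine $\alpha_z$ on $\SC^1$ with $T_z$ on $L^1$, and a translation-intertwining bounded map $\SC^1\to L^1$ is necessarily a convolution with a fixed operator by Werner's representation theorem for covariant channels. I would either cite the structure of covariant completely-positive-like maps directly (as the statement attributes the theorem to Holevo and points to \cite{Kiukas:2006}) or reconstruct $S$ by setting $\F_W(S):=\F_W(\Gamma(\cdot))/\F_\sigma(\cdot)$ formally and justifying the quotient is a well-defined Fourier-Wigner transform of a trace-class operator. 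Once $S$ is in hand, verifying the four properties for $f\star S$ and appealing to injectivity of $\F_W$ to conclude $\Gamma(f)=f\star S$ is routine via propositions \ref{prop:positiveandidentity} and \ref{prop:convolutionandft}.
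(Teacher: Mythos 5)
Your overall architecture (pass to the predual map, let covariance pin down the $z$-dependence, use positivity and normalization to get $S\geq 0$ and $\tr(S)=1$) matches the paper's outline, but the crux of the argument --- actually \emph{producing} the trace-class operator $S$ --- is missing from your step (b), and neither of the two routes you offer for it works. Citing ``Werner's representation theorem for covariant channels'' is circular: the statement that a covariant, positive, normalized, weak*-continuous map $L^\infty(\Rdd)\to\bo$ is a convolution with a fixed density operator \emph{is} the theorem being proved. Your fallback, defining $\F_W(S):=\F_W(\Gamma(f))/\F_\sigma(f)$, fails as stated: $\F_\sigma(f)$ vanishes on large sets for general $f$, you have no a priori consistency of the quotient across different choices of $f$, and at this stage you do not yet know that $\Gamma(f)$ lies in any class on which $\F_W$ is defined as a function and injective --- that is precisely what needs to be established.

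The paper's mechanism for this step has two ingredients you do not mention. First, one shows that $\Gamma$ restricts to a bounded map $L^1(\Rdd)\to\tco$: combining all four hypotheses one derives, for positive $f\in L^1\cap L^\infty$ and positive $T\in\tco$, the identity $\iint_{\Rdd}\tr(T\alpha_z(\Gamma(f)))\,dz=\left(\iint_{\Rdd}f\,dz\right)\tr(T)$, which by comparison with Lemma \ref{lem:werner} forces $\Gamma(f)\in\tco$ with $\|\Gamma(f)\|_{\tco}=\tr(\Gamma(f))=\iint f\,dz$. Second, the vector measure $\Omega\mapsto\Gamma(\chi_\Omega)$ is then absolutely continuous with respect to Lebesgue measure, and since $\tco$ is a dual space (of the compact operators) it has the \emph{Radon--Nikodym property}; this is what yields a measurable density $\bar S:\Rdd\to\tco$ with $\Gamma(f)=\iint f(z)\bar S(z)\,dz$. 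Only then does covariance, via uniqueness of Radon--Nikodym derivatives, force $\bar S(z)=\alpha_z(S)$ for a single $S$ --- the part of the argument your proposal correctly anticipates. Without the $L^1\to\tco$ bound and the Radon--Nikodym step, the covariance argument has nothing to act on.
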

\begin{proof}
	  Before we begin, we remark that assumption (4) is exactly what we need to conclude that $\Gamma$ is the Banach space adjoint of some bounded linear operator $\Gamma_{\ast}:\tco \to L^1(\R^{2d})$. The existence of $\Gamma_{\ast}$ is how assumption (4) will be used, although it will not be explicitly mentioned in this brief outline.  \\
	The first step of the proof is to show that $\Gamma$ induces a bounded mapping $\Gamma:L^1(\R^{2d})\to \tco$. A calculation using all the assumptions of the proposition shows that for a positive $f\in L^{\infty}(\R^{2d})\cap L^1(\R^d)$ and a positive operator $T\in \tco$ we have
	\begin{equation*}
  \iint_{\R^{2d}} \tr(T\alpha_z(\Gamma(f))) \ dz =\iint_{\R^{2d}} f(z) \ dz \ \tr(T).
\end{equation*}
Comparing this with lemma \ref{lem:werner} we see that $\Gamma(f)\in \tco$ with $\|\Gamma(f)\|_{\tco}=\tr(\Gamma(f))=\iint_{\R^{2d}} f(z) \ dz$. This result holds for positive $f\in L^{\infty}(\R^{2d})\cap L^1(\R^{2d})$, and using this we may extend $\Gamma$ to a well-defined bounded operator from $L^1(\R^{2d})$ to $\tco$.

Using $\Gamma:L^1(\R^{2d})\to \tco$  we can construct a measure on $\R^{2d}$ with values in $\tco$. For a bounded, Lebesgue measurable subset $\Omega \subset \R^{2d}$, we define a measure by $\Omega\mapsto \Gamma(\chi_{\Omega})$. By our previous calculation we have $\|\Gamma(\chi_{\Omega})\|_{\tco}=\iint_{\R^{2d}} \chi_{\Omega}(z) \ dz=\mu(\Omega).$ This shows that our $\tco$-valued measure is absolutely continuous with respect to Lebesgue measure, and since $\tco$ has the Radon-Nikodym property \footnote{This follows from theorem 1 on page 79 of \cite{Diestel:1977}, as $\tco$ is the dual space of the compact operators.} it follows that there is some measurable $\bar{S}: \R^{2d}\to \tco$ such that \footnote{We have ignored one issue: we need to restrict $\Omega$ to bounded subsets to ensure that $\chi_{\Omega}\in L^1(\R^{2d})$. The technical details needed to circumvent this issue are given in the proof of lemma 3.1 in \cite{Kiukas:2006}.}
\begin{equation*}
  \Gamma(f)=\iint_{\R^{2d}} f(z) \bar{S}(z) \ dz.
\end{equation*}
The proof is now concluded by showing that assumption (2) and uniqueness of Radon-Nikodym derivatives imply that the function $\bar{S}(z)$ is given by $\bar{S}(z)=\alpha_z(S)$ for some fixed $S\in \tco$ -- see \cite{Werner:1984} or \cite{Kiukas:2006} for the details. 
\end{proof}
\begin{rem}
\begin{enumerate}
	\item Mappings $\Gamma:L^{\infty}(\R^{2d})\to \bo$ having these four properties are called \textit{positive correspondence rules} by Werner \cite{Werner:1984}.
	\item Recently, a similar result for $\Gamma: \mathcal{S}'(\Rdd)\to \tempdist$ has been proved by Cordero et al. \cite[Thm. 4.7]{Cordero:2018} at the level of Weyl symbols.
	\item The proof that mappings of the form $\Gamma(f)=f\star S$ are positive correspondence rules, for positive $S\in \tco$ with $\tr(S)=1$, is deferred to section \ref{sec:genlocandpovm} -- see the remark following proposition \ref{prop:covariantpovm}.
\end{enumerate}
\end{rem}
We claim that theorem \ref{thm:poscorrule} shows that our definition of mixed-state localization operators is natural. Consider the case of a localization operator $\mathcal{A}_{\Omega}^{\varphi}$ for $\Omega\subset \Rdd$ and $\varphi \in \HS$ with $\|\varphi\|_{L^2}=1$. These localization operators define a mapping $\Gamma_{\varphi}:L^{\infty}(\Rdd)\to \bo$ by $f\mapsto \mathcal{A}_{f}^{\varphi}=f\star (\varphi \otimes \varphi)$. For $\Gamma_{\varphi}$, the four properties in theorem \ref{thm:poscorrule} are true and have natural interpretations: 
\begin{enumerate}
	\item  We have that $\Gamma_{\varphi}(\chi_{\Rdd})\psi=\mathcal{A}_{\R^{2d}}^{\varphi}\psi=\psi$ for $\psi\in \HS$, which formalizes the fact that localizing $\psi\in L^2(\R^d)$ to the whole time-frequency plane $\R^{2d}$ should return $\psi$.    
	\item For a characteristic function $\chi_{\Omega}$, the property $$\Gamma_{\varphi}(T_z\chi_{\Omega})=\alpha_z(\Gamma_{\varphi}\chi_{\Omega})$$ says that
	$$\mathcal{A}_{\Omega-z}^{\varphi}=\mathcal{A}_{\Omega}^{\pi(z)\varphi} $$ 
	where $\Omega-z=\{z'-z:z'\in \Omega\}$. We may interpret this as saying that shifting the domain $\Omega$ of a localization operator by $z\in \Rdd$ is equivalent to replacing the window $\varphi$ with the time-frequency shift $\pi(z)\varphi$.   
	\item Since $\Gamma_{\varphi}(\chi_{\Omega})=\mathcal{A}_{\Omega}^{\varphi}$ is interpreted as an operator that picks out the part of a signal living in $\Omega$ in the time-frequency plane, it makes sense that $\inner{\mathcal{A}_{\Omega}^{\varphi}\psi}{\psi}\geq 0$ -- i.e. $\Gamma_{\varphi}(\chi_{\Omega})$ is a positive operator. 
	\item $\Gamma_{\varphi}:L^{\infty}(\Rdd) \to \bo$ is weak* to weak*-continuous, in particular assigning a localization operator $\mathcal{A}_{\Omega}^{\varphi}$ to a domain $\Omega$ is continuous in this sense. 
\end{enumerate}
It seems natural to require that a generalization of localization operators also satisfies (1), (2), (3) and (4), and theorem \ref{thm:poscorrule} shows that we are then lead to our definition of mixed-state localization operators. 

\subsection{Uniqueness of the domain} \label{sec:reconstructdomain}
In recent years the question of obtaining the domain $\Omega$ from the localization operator $\mathcal{A}_{\Omega}^{\varphi}$ has received some attention \cite{Abreu:2012,Abreu:2016}. In this section we will consider the theoretical possibility of such reconstruction for the mixed-state localization operators: if $S\in \tco$, when is the domain $\Omega \subset \Rdd$ uniquely determined by the mixed-state localization operator $\chi_{\Omega}\star S$, up to sets of Lebesgue measure zero\footnote{By "up to sets of Lebesgue measure zero" we mean that we regard two sets $\Omega,\Omega^\prime$ to be equal if $\mu(\Omega \triangle \Omega^{\prime})=0$, where $\triangle$ is the symmetric difference of sets.}? Since the localization operators $\mathcal{A}_{\Omega}^{\varphi}$ form a subset of the mixed-state localization operators, our results will also be applicable to such operators.
Our results follow from theorem \ref{thm:wiener} -- the Tauberian theorem for convolutions with operators. The first result concerns domains $\Omega$ with $\mu(\Omega)<\infty$.
\begin{thm}
\label{thm:boundedrec}
	\begin{enumerate} 
		\item  If $S\in \tco$ is such that the set $\{z\in \R^{2d}:\F_W(S)(z)=0\}$ has dense complement in $\R^{2d}$, then any $\Omega\subset \R^{2d}$  with finite Lebesgue measure is uniquely determined by the operator $\chi_{\Omega}\star S$, up to Lebesgue measure zero.
		\item  If $\varphi_1,\varphi_2 \in L^2(\R^d)$ are windows such that the set $\{z\in \R^{2d}: A(\varphi_2,\varphi_1)(z)=0\}$ has dense complement in $\R^{2d}$, then any $\Omega\subset \R^{2d}$  with finite Lebesgue measure is uniquely determined by the operator $\mathcal{A}_{\Omega}^{\varphi_1,\varphi_2}$, up to Lebesgue measure zero.
	\end{enumerate}
\end{thm}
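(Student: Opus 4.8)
The plan is to reduce both uniqueness statements to the injectivity assertion in part (c) of the Tauberian theorem (theorem \ref{thm:wiener}). Suppose two domains $\Omega,\Omega'\subset\Rdd$ of finite Lebesgue measure produce the same operator, i.e. $\chi_{\Omega}\star S=\chi_{\Omega'}\star S$. Since Werner's convolution is bilinear, this is equivalent to $(\chi_{\Omega}-\chi_{\Omega'})\star S=0$. The whole point of the finite-measure hypothesis is that it forces $\chi_{\Omega},\chi_{\Omega'}\in\Ldd$, so that $f:=\chi_{\Omega}-\chi_{\Omega'}$ is a genuine element of $\Ldd$ and part (c) of theorem \ref{thm:wiener}, which is phrased precisely for masks in $\Ldd$, becomes applicable.

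First I would observe that the standing hypothesis of part (1) -- that the zero set $\{z\in\Rdd:\F_W(S)(z)=0\}$ has dense complement -- is verbatim condition (c1) of theorem \ref{thm:wiener}. The equivalence (c1)$\Leftrightarrow$(c2) then says that $f\star S=0$ with $f\in\Ldd$ forces $f=0$. Applying this to $f=\chi_{\Omega}-\chi_{\Omega'}$ gives $\chi_{\Omega}=\chi_{\Omega'}$ almost everywhere, that is $\mu(\Omega\triangle\Omega')=0$, which is exactly uniqueness up to sets of Lebesgue measure zero. This settles part (1).

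For part (2) I would simply specialize part (1) to the rank-one operator $S=\varphi_2\otimes\varphi_1$, which is trace class. By the identity $\mathcal{A}_{\Omega}^{\varphi_1,\varphi_2}=\chi_{\Omega}\star(\varphi_2\otimes\varphi_1)$ (lemma \ref{lem:locasconv}) and by lemma \ref{lem:FWrankone}, which gives $\F_W(\varphi_2\otimes\varphi_1)=A(\varphi_2,\varphi_1)$, the hypothesis that $\{z\in\Rdd:A(\varphi_2,\varphi_1)(z)=0\}$ has dense complement is precisely the hypothesis of part (1) for this choice of $S$; the conclusion then follows at once. The argument is short once theorem \ref{thm:wiener} is available, so the only genuine subtlety -- and the step I would flag -- is the role of the finite-measure assumption: it is needed solely to place $\chi_{\Omega}-\chi_{\Omega'}$ in $\Ldd$, the function space matched to part (c) of the Tauberian theorem. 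This is exactly why the result is confined to domains of finite measure, with unbounded (infinite-measure) domains requiring a separate treatment that leaves $L^1$.
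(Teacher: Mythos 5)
Your proof is correct and takes essentially the same route as the paper: the paper reduces the statement to the Tauberian theorem (the implication from a densely complemented zero set of $\F_W(S)$ to injectivity of $f\mapsto f\star S$ on $L^1(\Rdd)$, i.e.\ part (c) of theorem \ref{thm:wiener}, packaged as proposition \ref{prop:densityandreconstruction2}), applied exactly as you do to $f=\chi_{\Omega}-\chi_{\Omega'}$, with part (2) obtained by specializing to $S=\varphi_2\otimes\varphi_1$ via lemmas \ref{lem:locasconv} and \ref{lem:FWrankone}. (Note that the paper's one-line proof cites proposition \ref{prop:densityandreconstruction}, whose hypothesis is an \emph{empty} zero set; this is evidently a slip for proposition \ref{prop:densityandreconstruction2}, and your appeal to part (c) of the Tauberian theorem is the correct one.)
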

\begin{proof}
	Follows from the implication $(1)\implies (3)$ in proposition \ref{prop:densityandreconstruction}.
	\end{proof} 
\begin{rem}
	In \cite{Abreu:2016} it is shown that the theory of accumulated spectrograms gives a method for reconstructing a compact domain $\Omega$, using the spectrograms of a finite subset of the eigenfunctions of $\mathcal{A}_{R\cdot \Omega}^{\varphi}$ as $R\to \infty$. Note, however, that this requires knowledge of $\mathcal{A}_{R\cdot \Omega}^{\varphi}$ as $R\to \infty$, and hence not merely of $\mathcal{A}^\varphi_{\Omega}$. On the other hand it is also shown in \cite{Abreu:2016} that $\chi_\Omega$ can be \textit{estimated} using only the spectrograms of a finite number of eigenfunctions of $\mathcal{A}^\varphi_{\Omega}$. In a coming work we use quantum harmonic analysis to show that this is possible for any mixed-state localization operator with compact domain. 
	\end{rem}
	
To the knowledge of the authors, the problem of reconstructing \textit{unbounded} domains $\Omega$ from localization operators $\mathcal{A}_{\Omega}^\varphi$ for $\varphi\in \HS$ has not previously been considered in the literature. We will cover the more general question of reconstructing an unbounded domain $\Omega$ from a mixed-state localization operator $\chi_{\Omega}\star S$. Since an unbounded set $\Omega$ may have infinite Lebesgue measure, we will not be able to use that $\chi_{\Omega}\in L^1(\R^{2d})$ as we did in the proof of the previous corollary. We need to consider $\chi_{\Omega}$ as an element of $L^{\infty}(\R^{2d})$. This leads to a stronger condition on the set of zeros of the Fourier-Wigner transform.

\begin{thm} \label{thm:rec}
		\begin{enumerate}
		\item  If $S\in \tco$ is such that the set $\{z\in \R^{2d}:\F_W(S)(z)=0\}$ is empty, then any measurable $\Omega\subset \R^{2d}$  is uniquely determined by the operator $\chi_{\Omega}\star S$, up to Lebesgue measure zero.
		\item  If $\varphi_1,\varphi_2 \in L^2(\R^d)$ are windows such that the set $\{z\in \R^{2d}: A(\varphi_2,\varphi_1)(z)=0\}$ is empty, then any measurable $\Omega\subset \R^{2d}$ is uniquely determined by the operator $\mathcal{A}_{\Omega}^{\varphi_1,\varphi_2}$, up to Lebesgue measure zero.
	\end{enumerate}
\end{thm}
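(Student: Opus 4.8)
The plan is to deduce both parts directly from the $L^\infty$-uniqueness statement in proposition \ref{prop:densityandreconstruction}, which is itself part (a) of the Tauberian theorem \ref{thm:wiener}. The essential observation is that, in contrast to the finite-measure case treated in theorem \ref{thm:boundedrec}, the indicator function $\chi_\Omega$ of an arbitrary measurable set always lies in $L^\infty(\R^{2d})$ but need not lie in $L^1(\R^{2d})$ when $\mu(\Omega)=\infty$. One must therefore work with masks in $L^\infty$ rather than $L^1$, which is why the stronger hypothesis that the zero set of $\F_W(S)$ is \emph{empty} replaces the dense-complement condition used there.

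First I would prove part (1). Since $\{z\in \R^{2d}:\F_W(S)(z)=0\}$ is empty, condition (1) of proposition \ref{prop:densityandreconstruction} holds, and hence so does its equivalent condition (3): any mask $f\in L^\infty(\R^{2d})$ is uniquely determined by the operator $f\star S$. Now suppose $\Omega,\Omega'\subset\R^{2d}$ are measurable with $\chi_\Omega\star S=\chi_{\Omega'}\star S$. Both $\chi_\Omega$ and $\chi_{\Omega'}$ lie in $L^\infty(\R^{2d})$, so uniqueness of the mask forces $\chi_\Omega=\chi_{\Omega'}$ as elements of $L^\infty(\R^{2d})$. Two indicator functions agree in $L^\infty$ exactly when the underlying sets differ by a Lebesgue null set, i.e. $\mu(\Omega\triangle\Omega')=0$, which is the asserted uniqueness up to measure zero.

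For part (2) I would specialise part (1) to the rank-one operator $S=\varphi_2\otimes\varphi_1\in\tco$. By lemma \ref{lem:locasconv} we have $\mathcal{A}_\Omega^{\varphi_1,\varphi_2}=\chi_\Omega\star(\varphi_2\otimes\varphi_1)$, and by lemma \ref{lem:FWrankone} the Fourier--Wigner transform $\F_W(\varphi_2\otimes\varphi_1)$ equals the cross-ambiguity function $A(\varphi_2,\varphi_1)$. Hence the zero set of $\F_W(S)$ coincides with the zero set of $A(\varphi_2,\varphi_1)$, which is empty by hypothesis, so part (1) applies verbatim.

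There is no substantial analytic obstacle: the full weight of the argument rests on the Tauberian theorem \ref{thm:wiener}. The only point demanding care is the bookkeeping of function classes — pairing the unbounded domain with $L^\infty$ masks, and therefore with the empty-zero-set condition of proposition \ref{prop:densityandreconstruction}, rather than the $L^1$/dense-complement pairing appropriate to finite-measure domains.
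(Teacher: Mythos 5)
Your proof is correct and follows essentially the same route as the paper: both reduce the statement to the Tauberian theorem via the equivalence ``$\F_W(S)$ has empty zero set $\iff$ every $L^\infty$ mask is determined by $f\star S$'', apply it to $\chi_\Omega-\chi_{\Omega'}\in L^\infty(\R^{2d})$, and specialise to $S=\varphi_2\otimes\varphi_1$ using $\F_W(\varphi_2\otimes\varphi_1)=A(\varphi_2,\varphi_1)$ for part (2). In fact your citation of proposition \ref{prop:densityandreconstruction} is the right one; the paper's own one-line proof points to proposition \ref{prop:densityandreconstruction2}, which appears to be a typographical slip (the references in the proofs of theorems \ref{thm:boundedrec} and \ref{thm:rec} seem to have been swapped), so your bookkeeping of the $L^\infty$/empty-zero-set pairing is exactly what the argument requires.
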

\begin{proof}
	The proof is the same as in theorem \ref{thm:boundedrec}, except that we use proposition \ref{prop:densityandreconstruction2}.
\end{proof}
 
\section{Cohen's class and convolutions of operators} \label{sec:cohenand convolution}
In section \ref{sec:cohensclass} we defined Cohen's class to be those quadratic time-frequency representations $Q_{\phi}$ of the form
\begin{equation} \label{eq:cohenasweyl}
  Q_{\phi}(\psi)=W(\psi,\psi)\ast \phi
\end{equation}
for some $\phi\in \mathcal{S}^{\prime}(\Rdd)$ and any $\psi \in \mathcal{S}(\Rd)$. In this section we give a characterization of Cohen's class as convolutions with a fixed operator. We will show that many properties of the Cohen's class distribution may be precisely described as properties of the corresponding operator. 

\begin{prop} \label{prop:cohensclassasconvolutions}
	For $\phi \in \mathcal{S}^{\prime}(\Rdd)$, the associated Cohen's class distribution $Q_\phi$ is given by
\begin{equation} \label{eq:cohenasconvolutions}
  Q_{\phi}(\psi)= (\psi\otimes \psi) \star L_{\phi} \text{ \ for } \psi \in \mathcal{S}(\Rd),
\end{equation}
  where $L_{\phi}$ is the Weyl transform of $\phi$. \\
 Conversely, any operator $A\in \tempdist$ determines a Cohen's class distribution by 
 \begin{equation*} 
  Q_{A}(\psi):= (\psi\otimes \psi) \star \check{A}  \text{ \ for } \psi \in \mathcal{S}(\Rd).
\end{equation*}
\end{prop}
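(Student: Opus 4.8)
The plan is to prove the forward identity by moving to the Fourier side, where Werner's operator convolution becomes an ordinary product, and then transforming back; the converse will then be immediate from the bijectivity of the Weyl correspondence. Throughout I would work in the Schwartz-operator framework of Proposition \ref{prop:schwartzoperators}: for $\psi\in\mathcal{S}(\Rd)$ the rank-one operator $\psi\otimes\psi$ lies in $\schwartz$, while $L_\phi\in\tempdist$, so $(\psi\otimes\psi)\star L_\phi$ is a well-defined element of $\mathcal{S}^\prime(\Rdd)$ and the Fourier--convolution relations survive in this distributional setting by Proposition \ref{prop:schwartzoperators}(4)--(5).

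First I would apply the symplectic Fourier transform. By the distributional form of Proposition \ref{prop:convolutionandft}(1),
\[
\F_\sigma\bigl((\psi\otimes\psi)\star L_\phi\bigr)=\F_W(\psi\otimes\psi)\,\F_W(L_\phi).
\]
The first factor is the ambiguity function $A(\psi,\psi)$ by Lemma \ref{lem:FWrankone}. For the second factor I would use that the Weyl symbol of $L_\phi$ is $\phi$ by definition, so Proposition \ref{prop:schwartzoperators}(5) gives $\phi=\F_\sigma\F_W(L_\phi)$; applying $\F_\sigma$ and using $\F_\sigma^2=I$ yields $\F_W(L_\phi)=\F_\sigma(\phi)$. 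Hence $\F_\sigma\bigl((\psi\otimes\psi)\star L_\phi\bigr)=A(\psi,\psi)\,\F_\sigma(\phi)$.

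Then I would transform back. Applying $\F_\sigma$ once more and using the exchange formula $\F_\sigma(gh)=\F_\sigma(g)\ast\F_\sigma(h)$ already invoked in the proof of Proposition \ref{prop:weylsymbolofmultiwindow} (legitimate here since $A(\psi,\psi)\in\mathcal{S}(\Rdd)$ and $\F_\sigma(\phi)\in\mathcal{S}^\prime(\Rdd)$), together with $\F_\sigma^2=I$ and the identity $W(\psi,\psi)=\F_\sigma\bigl(A(\psi,\psi)\bigr)$ from the preliminaries, I obtain
\[
(\psi\otimes\psi)\star L_\phi=\F_\sigma\bigl(A(\psi,\psi)\bigr)\ast\phi=W(\psi,\psi)\ast\phi=Q_\phi(\psi),
\]
which is \eqref{eq:cohenasconvolutions}. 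For the converse, given $A\in\tempdist$ I would note that $\check A=PAP\in\tempdist$ (conjugation by $P$ preserves $\tempdist$, the prototype being $\check{L_g}=L_{\check g}$ from Lemma \ref{lem:shiftofweyl}), so $\check A=L_\phi$ for $\phi:=a_{\check A}\in\mathcal{S}^\prime(\Rdd)$; substituting into the forward identity gives $Q_A(\psi)=(\psi\otimes\psi)\star\check A=(\psi\otimes\psi)\star L_\phi=W(\psi,\psi)\ast\phi$, a Cohen class distribution.

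I expect the main obstacle to be one of rigor rather than of computation: the displayed manipulations are classical identities for functions, and the real work is certifying that each survives the passage to tempered distributions --- that $\psi\otimes\psi\in\schwartz$, that $\F_W(L_\phi)=\F_\sigma(\phi)$ holds as tempered distributions, and that the product of the Schwartz function $A(\psi,\psi)$ with the distribution $\F_\sigma(\phi)$ and its symplectic Fourier transform obey the classical product/convolution exchange. All of this is supplied by Proposition \ref{prop:schwartzoperators}; alternatively one could first prove the identity for $\phi\in\mathcal{S}(\Rdd)$, where $L_\phi\in\schwartz$ and every step is classical, and then extend to $\phi\in\mathcal{S}^\prime(\Rdd)$ using weak* density of $\mathcal{S}(\Rdd)$ and the continuity of both sides in $\phi$.
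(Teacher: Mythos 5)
Your proposal is correct and follows essentially the same route as the paper's own proof: apply $\F_\sigma$ to turn the operator convolution into the product $A(\psi,\psi)\,\F_W(L_\phi)$ via Lemma \ref{lem:FWrankone}, then apply $\F_\sigma$ again with the product--convolution exchange and Proposition \ref{prop:schwartzoperators}(4)--(5) to recover $W(\psi,\psi)\ast\phi$, and obtain the converse by taking $\phi$ to be the Weyl symbol of $\check A$. Your added remarks on distributional rigor and the density alternative are sound but not needed beyond what Proposition \ref{prop:schwartzoperators} already supplies.
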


\begin{proof}
	We will apply the symplectic Fourier transform twice to equation \eqref{eq:cohenasconvolutions} and use parts (4) and (5) of proposition \ref{prop:schwartzoperators}. First note that
	\begin{equation*}
  \F_{\sigma}((\psi\otimes \psi) \star L_\phi)= \F_W(\psi\otimes \psi)\F_W(L_\phi)= A(\psi,\psi) \F_W(L_\phi)
\end{equation*}
using lemma \ref{lem:FWrankone}. We now apply $\F_{\sigma}$ again, and since $\F_{\sigma}\F_{\sigma}$ is the identity operator we find
\begin{align*}
  (\psi\otimes \psi) \star L_\phi&=\F_{\sigma}(A(\psi,\psi) \F_W(L_\phi)) \\
  &= \F_{\sigma}(A(\psi,\psi)) \ast \F_{\sigma}\F_W(L_\phi) \\
  &= W(\psi,\psi) \ast \phi=Q_\phi(\psi),
\end{align*}
where we have used that $\F_{\sigma}(A(\psi,\psi))=W(\psi,\psi)$ and that $\F_\sigma \F_W L_\phi$ is the Weyl symbol of $L_\phi$ by part (5) of proposition \ref{prop:schwartzoperators}. The second statement follows easily from the first: let $\phi$ be the Weyl symbol of $\check{A}$. The first part states that $Q_\phi(\psi)=(\psi\otimes \psi) \star L_{\phi}=(\psi\otimes \psi)\star \check{A}=Q_A(\psi),$ showing that $Q_A$ is of Cohen's class. 
\end{proof}

\begin{rem}
In light of lemma \ref{lem:grochenig}, this proposition shows that any shift-invariant \footnote{In the sense that $Q(\pi(z)\psi)=T_z(Q(\psi))$ for $z\in \Rdd$ and $\psi \in \HS$.}, weakly continuous quadratic time-frequency distribution is given by a convolution with a fixed operator on $\HS$.
\end{rem}
By proposition \ref{prop:cohensclassasconvolutions}, any Cohen's class distribution may be described by either a distribution $\phi \in \mathcal{S}^{\prime}(\Rdd)$ or by an operator $A\in \tempdist$, where 
\begin{equation*} 
	Q_\phi=Q_A \text{ if } L_\phi=\check{A}.
\end{equation*}
 We have defined $Q_A$ in terms of $\check{A}$ to simplify formulas in section \ref{sec:connecting}, and the reader should note that $A$ and $\check{A}$ share all relevant properties, such as positivity, trace and membership of Schatten classes. Using that $Q_{\phi}(\psi)=(\psi\otimes \psi)\star L_{\phi}$, we may apply the theory of convolutions of operators to deduce some simple results on Cohen's class distributions.   

\begin{prop} \label{prop:cohenandlp} 
	Fix $1\leq p \leq \infty$. Consider a Cohen's class distribution $Q_{\phi}$ for $\phi \in \mathcal{S}^{\prime}(\Rdd)$. Let $L_{\phi}$ be the Weyl transform of $\phi$. If $L_{\phi}\in \SC^p$, then $Q_{\phi}(\psi)$ is well-defined for any $\psi \in \HS$ and $Q_{\phi}(\psi)\in L^p(\Rdd)$ with $\|Q(\psi)\|_{L^p}\leq \|\psi\|_{L^2}^2 \|S\|_{\SC^p}$. In particular, if $L_{\phi}\in \bo$, then $Q_{\phi}(\psi)\in L^{\infty}(\Rdd)$ with $\|Q(\psi)\|_{L^{\infty}}\leq \|\psi\|_{L^2}^2 \|S\|_{\bo}$.
\end{prop}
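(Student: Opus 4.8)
The plan is to reduce the statement entirely to the convolution representation of Cohen's class already in hand and then invoke the operator-valued Young inequality. By Proposition \ref{prop:cohensclassasconvolutions} we have $Q_\phi(\psi)=(\psi\otimes\psi)\star L_\phi$, so the asserted membership and norm bound become a statement about the convolution of the two operators $\psi\otimes\psi$ and $L_\phi$.

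First I would record the only concrete computation needed, namely the Schatten norm of the rank-one factor. Since $(\psi\otimes\psi)(\zeta)=\inner{\zeta}{\psi}\psi=\|\psi\|_{L^2}^2\,P_\psi(\zeta)$, where $P_\psi$ is the orthogonal projection onto $\C\psi$, the operator $\psi\otimes\psi$ has a single nonzero singular value equal to $\|\psi\|_{L^2}^2$. Hence $\psi\otimes\psi\in\SC^s$ for every $s$, and in particular $\|\psi\otimes\psi\|_{\SC^1}=\|\psi\|_{L^2}^2$.

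Next I would apply Proposition \ref{prop:convschatten} to the operator--operator convolution, choosing the Schatten exponents so that the output lands in $L^p$. Taking the first exponent equal to $1$ (for the factor $\psi\otimes\psi\in\SC^1$) and the second equal to $p$ (for the factor $L_\phi\in\SC^p$), the admissibility condition $\frac{1}{1}+\frac{1}{p}=1+\frac{1}{p}$ holds, giving output exponent $r=p$. The estimate $\|S\star T\|_{L^r}\leq\|S\|_{\SC^p}\|T\|_{\SC^q}$ then yields
\begin{equation*}
  \|Q_\phi(\psi)\|_{L^p}=\|(\psi\otimes\psi)\star L_\phi\|_{L^p}\leq\|\psi\otimes\psi\|_{\SC^1}\|L_\phi\|_{\SC^p}=\|\psi\|_{L^2}^2\,\|L_\phi\|_{\SC^p}.
\end{equation*}
The case $p=\infty$ is identical upon reading $\SC^\infty=\bo$, and reproduces the stated $L^\infty$-bound.

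For the well-definedness claim I would observe that the original formula $W(\psi,\psi)\ast\phi$ is only guaranteed to make sense for $\psi\in\mathcal{S}(\Rd)$, whereas the convolution $(\psi\otimes\psi)\star L_\phi$ is defined for \emph{every} $\psi\in\HS$: indeed $\psi\otimes\psi\in\SC^1$ for all $\psi\in\HS$ and $L_\phi\in\SC^p\subset\bo$, so Proposition \ref{prop:convschatten} with the exponents above produces an element of $L^p(\Rdd)$. Thus the hypothesis $L_\phi\in\SC^p$ is precisely what allows one to extend $Q_\phi$ from Schwartz functions to all of $\HS$. I do not expect any genuine obstacle here; the argument is bookkeeping of Schatten exponents in Young's inequality together with the single-singular-value computation for $\psi\otimes\psi$, and the only point requiring a word of care is this extension of the definition from $\mathcal{S}(\Rd)$ to $\HS$.
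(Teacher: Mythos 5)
Your proposal is correct and follows essentially the same route as the paper: both reduce to $Q_\phi(\psi)=(\psi\otimes\psi)\star L_\phi$ via Proposition \ref{prop:cohensclassasconvolutions}, note that $\|\psi\otimes\psi\|_{\tco}=\|\psi\|_{L^2}^2$, and apply the Young-type inequality of Proposition \ref{prop:convschatten} with exponents $1$ and $p$. Your explicit singular-value computation and the remark on extending $Q_\phi$ from $\mathcal{S}(\Rd)$ to $\HS$ merely spell out details the paper leaves implicit.
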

  \begin{proof}
  	For any $\psi \in \HS$ we have that $\psi\otimes \psi\in \tco$ with $\|\psi\otimes \psi\|_{\tco}=\|\psi\|_{L^2}^2$. Since $Q_{\phi}(\psi)=(\psi\otimes \psi) \star L_{\phi}$ by proposition \ref{prop:cohensclassasconvolutions}, the results follow from proposition \ref{prop:convschatten}.
  \end{proof}
  \begin{rem}
  	By Pool's theorem \cite{Pool:1966}, the condition that $L_{\phi}\in \SC^2$ is equivalent to $\phi \in L^2(\Rdd)$. Unfortunately there is no equally simple characterization of those $\phi$ such that $L_{\phi}\in \tco$ or $L_{\phi}\in \bo$.
  \end{rem}
 
\begin{exmp} \label{exmp:cohen}
\begin{enumerate}
	\item The Wigner distribution $Q_{\phi}(\psi)=W(\psi,\psi)$ is given by $\phi=\delta_0$ in equation \eqref{eq:cohenasweyl}. By proposition \ref{prop:cohensclassasconvolutions}, $W(\psi,\psi)$ is also given by 
\begin{align*}
  W(\psi,\psi)=(\psi \otimes \psi) \star L_{\delta_0}
\end{align*}
for $\psi \in \mathcal{S}(\Rd)$. By a result of Grossmann, $L_{\delta_0}=2^dP$, where $P$ is the parity operator \cite{Grossmann:1976}. 
\item Fix a window $\varphi\in \HS$ and consider the operator $S=\varphi\otimes \varphi$. Then $\check{S}=\check{\varphi}\otimes \check{\varphi}$, and by proposition \ref{prop:cohensclassasconvolutions}, $S$ defines a Cohen's class distribution $Q_S$ by
\begin{equation*}
  Q_S(\psi)=(\psi \otimes \psi) \star (\check{\varphi}\otimes \check{\varphi}) = |V_{\varphi}\psi|^2,
\end{equation*}
where the last expression follows from lemma \ref{lem:spectrogramasconvolution}. This Cohen's class distribution is therefore the spectrogram. The corresponding function $\phi$, i.e. the Weyl symbol of $\check{\varphi}\otimes \check{\varphi}$, is the Wigner distribution $W(\check{\varphi},\check{\varphi})$.
\end{enumerate}

The idea of using operators to define time-frequency distributions appeared in the work of Wigner \cite{Wigner:1997}. Wigner assumed the existence of a self-adjoint operator $A(z)\in \bo$ for each $z\in \Rdd$, and then defined a distribution $Q_A(\psi)(z)=\inner{A(z)\psi}{\psi}$ for $\psi \in \HS$. As Janssen notes \cite{Janssen:1997}, it follows from lemma \ref{lem:grochenig} that the desirable property $Q_A(\pi(z')\psi)=T_{z'}Q_A(\psi)$ is only satisfied if $A(z)=\pi(z)A\pi(z)^{\ast}$ for some fixed operator $A\in \bo$. In this case we get by the definition of the convolution of two operators that $Q_A(\psi)=(\psi\otimes \psi)\star \check{A}$. However, this approach is not pursued any further than this remark in \cite{Janssen:1997}.

\end{exmp}

 \subsection{Positive Cohen's class distributions} \label{sec:cohenpositive}
 We say that a Cohen's class distribution $Q_{\phi}$ is \textit{positive} if $Q_{\phi}(\psi)(z)\geq 0$ for all $z\in \Rdd$ and $\psi$ in the domain of $Q_{\phi}$. As has been pointed out by Gr\"ochenig \cite[Ch. 14.6]{Grochenig:2001}, positivity of $Q_{\phi}$ may be expressed in terms of the corresponding operator $L_{\phi}$.
 \begin{prop} \label{prop:positivityofcohen}
 	Let $Q_{\phi}$ be a Cohen's class distribution such that the Weyl transform $L_{\phi}$ is bounded on $\HS$. Then $Q_{\phi}$ is positive if and only if $L_{\phi}$ is a positive operator.  
 \end{prop}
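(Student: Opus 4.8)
The plan is to reduce the pointwise positivity of the function $Q_\phi(\psi)$ to the operator positivity of $L_\phi$ by writing $Q_\phi(\psi)(z)$ as a quadratic form in $L_\phi$. By Proposition \ref{prop:cohensclassasconvolutions} we have $Q_\phi(\psi)=(\psi\otimes\psi)\star L_\phi$, and since $L_\phi$ is bounded, Proposition \ref{prop:cohenandlp} guarantees that $Q_\phi(\psi)$ is a well-defined (indeed $L^\infty$) function for every $\psi\in\HS$; thus the domain of $Q_\phi$ is all of $\HS$, and the positivity condition is to be checked over all such $\psi$.

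First I would unfold the definition of the operator convolution. Using $S\star T(z)=\tr(S\alpha_z(\check T))$ with $S=\psi\otimes\psi$ and $T=L_\phi$, together with the rank-one trace identity $\tr((\psi\otimes\psi)B)=\inner{B\psi}{\psi}$ (which follows from cyclicity of the trace and $\tr(u\otimes v)=\inner{u}{v}$), I obtain
\begin{equation*}
Q_\phi(\psi)(z)=\inner{\alpha_z(\check{L_\phi})\psi}{\psi}=\inner{\check{L_\phi}\,\pi(z)^*\psi}{\pi(z)^*\psi}.
\end{equation*}
Recalling $\check{L_\phi}=PL_\phi P$ and that the parity operator $P$ is self-adjoint and unitary, this becomes $Q_\phi(\psi)(z)=\inner{L_\phi\eta_z}{\eta_z}$ with $\eta_z:=P\pi(z)^*\psi$. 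This quadratic-form identity is the crux of the argument, and deriving it cleanly---keeping track of the involution $\check{\ }$ and of the adjoint $\pi(z)^*$ so that the final expression involves $L_\phi$ itself rather than a shifted or parity-conjugated version---is the only genuine bookkeeping required.

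With the identity in hand both implications are immediate. If $L_\phi$ is positive then $\inner{L_\phi\eta_z}{\eta_z}\geq 0$ for every $z\in\Rdd$ and every $\psi\in\HS$, so $Q_\phi$ is positive. Conversely, if $Q_\phi$ is positive, I specialize to $z=0$: then $\eta_0=P\psi$, and since $P$ is a bijection of $\HS$, the vector $\eta_0$ ranges over all of $\HS$ as $\psi$ does; hence $\inner{L_\phi\eta}{\eta}\geq 0$ for every $\eta\in\HS$, i.e. $L_\phi$ is positive.

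I do not anticipate a serious obstacle here: once the quadratic-form identity is established, both directions follow from the surjectivity of $P$ (one could equally use any fixed $z$, since each $\pi(z)$ is also bijective). The only points demanding care are the rank-one trace formula and the correct handling of $\check{\ }$ together with the adjoint $\pi(z)^*$, ensuring that the value $Q_\phi(\psi)(z)$ is genuinely an evaluation of the quadratic form of $L_\phi$ on the vector $P\pi(z)^*\psi$.
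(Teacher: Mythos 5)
Your proposal is correct and follows essentially the same route as the paper: both establish the quadratic-form identity $Q_\phi(\psi)(z)=\inner{\check{L_\phi}\pi(z)^*\psi}{\pi(z)^*\psi}$ and evaluate at $z=0$ to get positivity of $\check{L_\phi}$ (hence of $L_\phi$) from positivity of $Q_\phi$. The only cosmetic difference is that for the converse the paper invokes Proposition \ref{prop:positiveandidentity} (convolution of positive operators is a positive function) while you read it off directly from the same identity, and the paper additionally remarks that continuity of $(\psi\otimes\psi)\star L_\phi$ is what legitimizes the pointwise evaluation at $0$ --- a point your explicit continuous representative handles implicitly.
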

 \begin{proof}
 If $Q_{\phi}(\psi)$ is positive, then we have in particular for any $\psi \in \HS$ that 
 \begin{equation*}
  0\leq Q_{\phi}(\psi)(0)=\inner{\check{L_{\phi}}\pi(0)^* \psi}{\pi(0)^*\psi}=\inner{\check{L_{\phi}}\psi}{\psi},
\end{equation*}
where we have used that $(\psi\otimes \psi) \star L_{\phi}(z)=\inner{\check{L_{\phi}}\pi(z)^* \psi}{\pi(z)^*\psi}$, as follows from the definition of the convolution of operators.  This shows that $\check{L_{\phi}}$ is positive, hence $L_\phi$ is positive. We have used that the function $(\psi\otimes \psi) \star L_{\phi}(z)$ is in fact a continuous function \cite[Prop 3.3 (3)]{Luef:2017vs} to ensure that it has a well-defined value at $0$. If we assume that $L_{\phi}$ is positive, then we get that $Q_{\phi}$ is positive since $Q_{\phi}(\psi)=(\psi\otimes \psi)\star L_{\phi}$, $\psi\otimes \psi$ is a positive operator and the convolution of positive operators is a positive function by proposition \ref{prop:positiveandidentity}.
 \end{proof}
 The condition that a time-frequency distribution $Q_{\phi}$ should be positive is a natural requirement. One might therefore ask what conditions $\phi \in \mathcal{S}^{\prime}(\Rdd)$ should satisfy to ensure that $Q_{\phi}$ is positive. By the previous proposition, we may equivalently ask which conditions $\phi$ must satisfy to ensure that the Weyl transform $L_{\phi}$ is a positive operator. This question is of interest in quantum mechanics, and providing a general answer has turned out to be difficult. The so-called KLM conditions due to Kastler \cite{Kastler:1965}, and Loupias and Miracle-Sole \cite{Loupias:1966,Loupias:1967} are the most famous result of this kind, and we now formulate these conditions in our context, using the notation from \cite{deGosson:2011wq,Cordero:2017}. 
 \begin{thm}
 	Let $\phi$ be a real-valued function on $\Rdd$ such that the Weyl transform $L_{\phi}\in \tco$. The Cohen's class distribution $Q_{\phi}$ is positive if and only if
 	\begin{enumerate}
 		\item $\phi$ is continuous.
 		\item For every $N\geq 1$ and every $N$-tuple $(z_1,...,z_N)\in \left(\Rdd\right)^N$ the $N\times N$ matrix with entries
 		\begin{equation*}
  M_{jk}=e^{-2\pi i \sigma(z_j,z_k)}\F_{\sigma}(\phi)(z_j-z_k)
\end{equation*}
is positive semidefinite.
 	\end{enumerate} 
 \end{thm}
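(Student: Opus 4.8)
The plan is to reduce the statement about positivity of the distribution $Q_\phi$ to positivity of the operator $L_\phi$, and then to recognize condition (2) as the assertion that the Fourier–Wigner transform of $L_\phi$ is a \emph{twisted positive-definite} function, i.e.\ a Bochner-type characterization of positive operators. First I would invoke Proposition \ref{prop:positivityofcohen} (applicable since $L_\phi\in\tco\subset\bo$) to replace ``$Q_\phi$ is positive'' by ``$L_\phi$ is a positive operator.'' The whole argument then rests on the dictionary from Proposition \ref{prop:fwspreading}: since $a_{L_\phi}=\F_{\sigma}\F_W(L_\phi)$ and $\F_{\sigma}^2=I$, the Weyl symbol $\phi$ satisfies $\F_{\sigma}(\phi)=\F_W(L_\phi)$. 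It is convenient to introduce the symmetric time–frequency shifts $\rho(z)=e^{-\pi i x\cdot\omega}\pi(z)$, which satisfy $\rho(z)^*=\rho(-z)$ and the composition law $\rho(z)\rho(w)=e^{i\pi\sigma(z,w)}\rho(z+w)$; with this normalization one checks $\F_{\sigma}(\phi)(z)=\tr(\rho(-z)L_\phi)$ and, by inverting the spreading-function representation of Proposition \ref{prop:fwspreading}, $L_\phi=\iint_{\Rdd}\F_{\sigma}(\phi)(z)\,\rho(z)\,dz$.

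For the necessity of (1) and (2), assume $L_\phi\ge 0$. Continuity of $\phi$ I would obtain from the spectral decomposition $L_\phi=\sum_n\lambda_n(\varphi_n\otimes\varphi_n)$ with $\lambda_n\ge 0$ and $\sum_n\lambda_n=\tr(L_\phi)<\infty$: the Weyl symbol of $\varphi_n\otimes\varphi_n$ is the Wigner function $W(\varphi_n,\varphi_n)$, which is continuous with $\|W(\varphi_n,\varphi_n)\|_{\infty}\le 2\|\varphi_n\|_{L^2}^2$, so $\phi=\sum_n\lambda_n W(\varphi_n,\varphi_n)$ is a uniformly convergent series of continuous functions and hence continuous. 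For (2), fix $z_1,\dots,z_N$ and $c_1,\dots,c_N\in\C$ and set $U=\sum_k c_k\rho(z_k)$. Using $\rho(z_j)^*\rho(z_k)=e^{-i\pi\sigma(z_j,z_k)}\rho(z_k-z_j)$ together with $\tr(\rho(z_k-z_j)L_\phi)=\F_{\sigma}(\phi)(z_j-z_k)$, a direct computation gives $\sum_{j,k}\overline{c_j}\,c_k\,e^{-i\pi\sigma(z_j,z_k)}\F_{\sigma}(\phi)(z_j-z_k)=\tr(L_\phi\,U^*U)=\|L_\phi^{1/2}U\|_{\SC^2}^2\ge 0$, which is precisely the positive semidefiniteness of the matrix in (2).

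For the sufficiency, assume (1) and (2). The key step is to pass from the discrete positive semidefiniteness to its integrated form: for every $u\in C_c(\Rdd)$ the quantity $J(u):=\iint \overline{u(z_1)}\,u(z_2)\,e^{-i\pi\sigma(z_1,z_2)}\F_{\sigma}(\phi)(z_1-z_2)\,dz_1\,dz_2$ is nonnegative, since $\F_{\sigma}(\phi)=\F_W(L_\phi)$ is continuous (as $L_\phi\in\tco$) and $J(u)$ is therefore a limit of Riemann sums $\sum_{j,k}\overline{c_j}\,c_k\,e^{-i\pi\sigma(z_j,z_k)}\F_{\sigma}(\phi)(z_j-z_k)$ with $c_j=u(z_j)\Delta$, each nonnegative by (2). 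Setting $U_u=\iint u(z)\rho(z)\,dz$, which is Hilbert–Schmidt with $\F_W(U_u)=u$, the same manipulation as in the necessity direction shows $J(u)=\tr(L_\phi\,U_u^*U_u)$. Finally, since $\F_W:\SC^2\to L^2(\Rdd)$ is unitary, every rank-one operator $\psi\otimes\psi$ arises as an $\SC^2$-limit $U_{u_k}^*U_{u_k}\to\psi\otimes\psi$ with $u_k\in C_c(\Rdd)$; passing to the limit in $\tr(L_\phi\,U_{u_k}^*U_{u_k})\ge 0$ yields $\inner{L_\phi\psi}{\psi}\ge 0$ for all $\psi$, i.e.\ $L_\phi\ge 0$.

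The main obstacle is the sufficiency direction, and within it two analytic points require care. First, the passage from the finite matrix condition to the integral inequality $J(u)\ge 0$ needs the continuity of $\F_{\sigma}(\phi)$ together with a Riemann-sum/dominated-convergence argument controlling the oscillatory symplectic kernel. Second, the approximation of the rank-one operator by the integral operators $U_{u_k}$ must be arranged so that $U_{u_k}\to\eta\otimes\psi$ in $\SC^2$ forces $U_{u_k}^*U_{u_k}\to(\eta\otimes\psi)^*(\eta\otimes\psi)$ in $\SC^1$, allowing the bounded functional $\tr(L_\phi\,\cdot\,)$ to be passed through the limit. I would also flag that the symplectic phase produced by the composition law of $\rho$ comes out as $e^{-i\pi\sigma}$, so reconciling it with the factor displayed in condition (2) is purely a matter of the normalization chosen for the symplectic form (equivalently, of $\F_{\sigma}$) in the cited references, and does not affect any of the positive-semidefiniteness statements.
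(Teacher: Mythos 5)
Your proposal is correct in outline but takes a genuinely different (and much more self-contained) route than the paper. The paper's entire proof is two sentences: it applies proposition \ref{prop:positivityofcohen} to convert positivity of $Q_{\phi}$ into positivity of the operator $L_{\phi}$ (your first step, identically), and then \emph{cites} the KLM conditions from de Gosson's book as a known characterization of positive Weyl transforms. Everything in your proposal after the first reduction is therefore a proof of the black box the paper imports. What your version buys is transparency: the identity $\sum_{j,k}\overline{c_j}c_k M_{jk}=\tr(L_{\phi}U^*U)$ with $U=\sum_k c_k\rho(z_k)$ exhibits the KLM matrix as a Gram-type matrix of the state $L_{\phi}$ on the symmetrized Weyl system, and the sufficiency direction is a clean twisted-Bochner argument (Riemann sums using continuity of $\F_W(L_{\phi})$ for $L_{\phi}\in\tco$, then density, using that $\F_W:\SC^2\to L^2(\Rdd)$ is unitary and that multiplication $\SC^2\times\SC^2\to\SC^1$ is continuous, so $U_{u_k}\to V$ in $\SC^2$ with $V^*V=\psi\otimes\psi$ gives $U_{u_k}^*U_{u_k}\to\psi\otimes\psi$ in $\SC^1$). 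Both directions are sound; the only blemishes are cosmetic ($\|W(\varphi_n,\varphi_n)\|_{\infty}\leq 2^d\|\varphi_n\|_{L^2}^2$, not $2\|\varphi_n\|_{L^2}^2$, and $\tr(L_{\phi}U^*U)=\|UL_{\phi}^{1/2}\|_{\SC^2}^2$ rather than $\|L_{\phi}^{1/2}U\|_{\SC^2}^2$).

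The one place you are too quick is the phase. Your computation in the paper's normalization ($\pi(z)=M_{\omega}T_x$, symplectic Fourier kernel $e^{-2\pi i\sigma(z,z')}$, $\F_{\sigma}\phi=\F_W(L_{\phi})$) genuinely forces the exponent $e^{-i\pi\sigma(z_j,z_k)}$ -- this is $e^{-\frac{i}{2\hbar}\sigma}$ with $\hbar=1/(2\pi)$, consistent with the cited source, where the matrix phase always carries half the exponent of the Fourier kernel. This cannot be reconciled with the displayed $e^{-2\pi i\sigma(z_j,z_k)}$ by ``choice of normalization'' alone: for a \emph{fixed} function $\F_{\sigma}\phi$ the two families of matrices are genuinely different conditions (they correspond to one another only after dilating the argument of $\F_{\sigma}\phi$ by $\sqrt{2}$). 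So rather than waving the discrepancy off as immaterial, your derivation should be read as identifying a factor-of-two error in the exponent of the theorem as displayed; the correct statement in the paper's conventions has $M_{jk}=e^{-i\pi\sigma(z_j,z_k)}\F_{\sigma}(\phi)(z_j-z_k)$.
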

\begin{proof}
	The KLM-conditions state that if $\phi$ is a real-valued function on $\Rdd$ such that the Weyl transform $L_{\phi}\in \tco$, then the operator $L_{\phi}$ is positive if and only if $\phi$ satisfies the two properties above \cite[Prop 306 and Thm 309]{deGosson:2011wq}. By proposition \ref{prop:positivityofcohen}, $Q_{\phi}$ is positive if and only if $L_{\phi}$ is a positive operator.
\end{proof}
There are other, more recent results of this nature. Cordero, de Gosson and Nicola \cite{Cordero:2017} have recently proved a version of the KLM-conditions that seems more tractable for numerical verification. In  fact, their conditions characterize those $\phi \in L^2(\Rdd)$ such that the Weyl transform $L_{\phi} \in \SC^2$ is a positive operator. In terms of Cohen's class, their condition characterizes those $\phi \in L^2(\Rdd)$ such that $Q_{\phi}$ is positive. The reader is referred to \cite{Cordero:2017} for precise statements and proofs.

\subsection{Cohen's class distributions with the correct total energy property} \label{sec:cohentotalenergy}
 
Following Janssen \cite{Janssen:1997}, we say that a Cohen's class distribution $Q_{\phi}$ has the \textit{correct total energy property} if
\begin{equation*}
  \iint_{\Rdd} Q_{\phi}(\psi)(z) \ dz = \|\psi\|_{L^2}^2
\end{equation*}
for all $\psi\in \HS$. One might think of $Q_{\phi}(\psi)$ as an energy distribution for the signal $\psi$, and so one would hope that the total energy $\|\psi\|_{L^2}^2$ equals the integral of the energy distribution $Q_{\phi}(\psi)$. We now show how this property of the distribution $Q_{\phi}$ is related to properties of the Weyl transform $L_{\phi}$.

\begin{prop} \label{prop:moyalforcohen}
	Let $Q_{\phi}$ be a Cohen's class distribution, and let $L_{\phi}$ be the Weyl transform of $\phi$. If $L_{\phi}\in \tco$, then
	\begin{equation} \label{eq:integralformula}
  \iint_{\Rdd} Q_{\phi}(\psi) \ dz = \|\psi\|_{L^2}^2 \tr(L_{\phi})
\end{equation}
for any $\psi \in \HS$. If in addition $\phi \in \Ldd$ , then
\begin{equation} \label{eq:simpletraceformula}
	\iint_{\Rdd} Q_{\phi}(\psi) \ dz = \|\psi\|_{L^2}^2\iint_{\Rdd} \phi(z) \ dz
\end{equation}
\end{prop}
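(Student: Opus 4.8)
The plan is to reduce the whole statement to Lemma \ref{lem:werner}, which computes the integral of a convolution of two trace-class operators. First I would invoke Proposition \ref{prop:cohensclassasconvolutions} to write $Q_{\phi}(\psi) = (\psi \otimes \psi) \star L_{\phi}$. Here $\psi \otimes \psi$ is a rank-one operator in $\tco$ with $\tr(\psi \otimes \psi) = \inner{\psi}{\psi} = \|\psi\|_{L^2}^2$, and $L_{\phi} \in \tco$ by hypothesis, so both factors are trace class. By Lemma \ref{lem:werner} (together with Proposition \ref{prop:convschatten}) the function $Q_{\phi}(\psi) = (\psi \otimes \psi) \star L_{\phi}$ then lies in $\Ldd$, so the integral in \eqref{eq:integralformula} is well-defined.

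Second, I would compute the integral directly. Unwinding the definition of the convolution of two operators, $(\psi \otimes \psi) \star L_{\phi}(z) = \tr\big((\psi \otimes \psi)\,\alpha_z(\check{L_{\phi}})\big)$, so applying Lemma \ref{lem:werner} with the second operator taken to be $\check{L_{\phi}}$ gives
\begin{equation*}
  \iint_{\Rdd} (\psi \otimes \psi) \star L_{\phi}(z)\, dz = \tr(\psi \otimes \psi)\,\tr(\check{L_{\phi}}).
\end{equation*}
Since $\check{L_{\phi}} = P L_{\phi} P$ and the parity operator satisfies $P^2 = I$, cyclicity of the trace yields $\tr(\check{L_{\phi}}) = \tr(L_{\phi})$. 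Combining this with $\tr(\psi \otimes \psi) = \|\psi\|_{L^2}^2$ produces exactly \eqref{eq:integralformula}.

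For the remaining identity it suffices to show $\tr(L_{\phi}) = \iint_{\Rdd} \phi(z)\, dz$ when $\phi \in \Ldd$. The key observation is that evaluating the Fourier-Wigner transform at the origin recovers the trace: directly from its definition, $\F_W(S)(0) = \tr(\pi(0)S) = \tr(S)$ for $S \in \tco$, since $\pi(0) = I$ and the phase factor equals $1$. Moreover, Proposition \ref{prop:fwspreading}(2) states $\phi = a_{L_{\phi}} = \F_{\sigma}\F_W(L_{\phi})$, and applying $\F_{\sigma}$ (its own inverse) gives $\F_W(L_{\phi}) = \F_{\sigma}(\phi)$. Hence $\tr(L_{\phi}) = \F_W(L_{\phi})(0) = \F_{\sigma}(\phi)(0) = \iint_{\Rdd} \phi(z)\, dz$, using $\sigma(0,z')=0$ and the assumption $\phi \in \Ldd$ to evaluate the symplectic Fourier transform pointwise. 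Substituting this into \eqref{eq:integralformula} yields \eqref{eq:simpletraceformula}.

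The only genuine subtlety---the main obstacle---is the trace formula $\tr(L_{\phi}) = \iint \phi$; I expect the cleanest route to be through the Fourier-Wigner transform at the origin as above, which avoids any direct manipulation of the Weyl symbol and its integral kernel. One should also note that $\F_W(L_{\phi})$ is continuous, so that pointwise evaluation at $0$ is legitimate; this follows from $L_{\phi} \in \tco$ via the mapping properties of $\F_W$ on trace-class operators.
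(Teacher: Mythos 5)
Your proof is correct and follows essentially the same route as the paper for \eqref{eq:integralformula}: write $Q_{\phi}(\psi)=(\psi\otimes\psi)\star L_{\phi}$, unwind the operator convolution as $\tr\bigl((\psi\otimes\psi)\alpha_z(PL_{\phi}P)\bigr)$, integrate via Lemma \ref{lem:werner}, and use $P^2=I$ together with $\tr(\psi\otimes\psi)=\|\psi\|_{L^2}^2$. The only divergence is in \eqref{eq:simpletraceformula}: the paper simply cites the known relation $\tr(L_{\phi})=\iint_{\Rdd}\phi(z)\,dz$ from the literature, whereas you derive it by evaluating $\F_W(L_{\phi})=\F_{\sigma}(\phi)$ at the origin --- a clean, self-contained alternative that stays entirely within the paper's Fourier--Wigner framework (the continuity remark needed to justify pointwise evaluation is correctly noted).
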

\begin{proof}
	By proposition \ref{prop:cohensclassasconvolutions}
	\begin{equation*}
  Q_{\phi}(\psi)=(\psi\otimes \psi)\star L_{\phi}.
\end{equation*}
By the definition of the convolution of two operators, $(\psi\otimes \psi)\star L_{\phi}=\tr((\psi\otimes \psi)\alpha_z(PL_{\phi}P))$. Since $L_{\phi}$ is assumed to be trace class, we may apply lemma \ref{lem:werner} to find that
\begin{align*}
   \iint_{\Rdd} \tr((\psi\otimes \psi)\alpha_z(PL_{\phi}P)) \ dz &= \tr(\psi\otimes \psi)\tr(PL_{\phi}P) \\
   &= \|\psi\|_{L^2}^2 \tr(P^2L_{\phi})=\|\psi\|_{L^2}^2 \tr(L_{\phi}).
\end{align*}
 We have used that $\tr(\psi\otimes \psi)=\|\psi\|_{L^2}^2$, which is a simple consequence of the definition of the trace. If $\phi \in \Ldd$, we may use the well-known relation
 \begin{equation*}
  \tr(L_{\phi})=\iint_{\Rdd} \phi(z) \ dz
\end{equation*}
between a distribution $\phi$ and its Weyl transform in this case to complete the proof \cite[Prop. 286]{deGosson:2011wq}. 
\end{proof}
\begin{rem} 
 There are many examples of Cohen's class distributions $Q_\phi$ where $L_\phi \in \tco$ yet $\phi \notin L^1(\Rdd)$, so that equation \eqref{eq:simpletraceformula} does not apply. For instance, if $\phi=W(\check{\varphi},\check{\varphi})$ for $\varphi \in L^2(\Rd)$, we saw in example \ref{exmp:cohen} that $Q_\phi$ is a spectrogram and $L_\phi=\check{\varphi}\otimes \check{\varphi} \in \tco$. For $W(\check{\varphi},\check{\varphi})\in L^1(\Rdd)$ to hold, $\varphi$ must be an element of the so-called \textit{Feichtinger algebra} \cite{Feichtinger:1981, deGosson:2011wq}, in particular $\varphi$ must be continuous. Hence equation \eqref{eq:integralformula} holds for a set of Cohen's class distributions where equation \eqref{eq:simpletraceformula} does not even make sense, and equation \eqref{eq:integralformula} and the connection to the trace of an operator is new to the best of our knowledge.
 
  In the special case of $\phi \in \mathcal{S}(\Rdd)$, equation \eqref{eq:simpletraceformula} is contained in section 2.4.2 of Janssen's survey \cite{Janssen:1997}. In this case both $L_{\phi}\in \tco$ and $\phi \in L^1(\Rdd)$ are satisfied, so Janssen needed neither equation \eqref{eq:integralformula} nor the connection to trace class operators. 
\end{rem}
A recurring theme in Janssen's thorough survey \cite{Janssen:1997} is that positivity for a Cohen's class distribution $Q_{\phi}$ is incompatible with many other desirable properties of $Q_{\phi}$. Proposition \ref{prop:moyalforcohen} shows that $Q_{\phi}$ may be both positive and have the correct total energy property if $L_{\phi}\in \tco$, but the next corollary shows that this fails spectacularly whenever $L_{\phi}\notin \tco$.
\begin{cor} \label{cor:energyinfinite}
	Let $Q_{\phi}$ a be a positive Cohen's class distribution, and let $L_{\phi}$ be the Weyl transform of $\phi$. If $L_{\phi} \in \bo \setminus \tco$, then
	\begin{equation*}
  \iint_{\Rdd} Q_{\phi}(\psi) \ dz = \infty
\end{equation*}
for all $\psi \in \HS$.
\end{cor}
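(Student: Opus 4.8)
The plan is to exploit the trace formula established in Proposition \ref{prop:moyalforcohen} together with the fact that a positive Cohen's class distribution is everywhere nonnegative. The key observation is that equation \eqref{eq:integralformula}, namely $\iint_{\Rdd} Q_{\phi}(\psi) \ dz = \|\psi\|_{L^2}^2 \tr(L_{\phi})$, was derived under the hypothesis $L_{\phi}\in \tco$. To handle the case $L_{\phi}\in \bo\setminus \tco$ I would not attempt to apply that formula directly, but rather use it as a template and reduce to a situation where the computation can be carried out via a monotone or truncation argument.

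First I would recall from Proposition \ref{prop:positivityofcohen} that, since $Q_{\phi}$ is positive and $L_{\phi}\in \bo$, the operator $L_{\phi}$ is a positive bounded operator. Since we are assuming $L_{\phi}\notin \tco$, the positive operator $L_{\phi}$ has $\tr(L_{\phi})=+\infty$ in the sense of the (possibly divergent) sum $\sum_n \inner{L_{\phi}e_n}{e_n}$ defining the trace of a positive operator in equation \eqref{eq:defoftrace}. The strategy is then to compute the integral $\iint_{\Rdd} Q_{\phi}(\psi)\ dz$ and show it coincides with $\|\psi\|_{L^2}^2 \tr(L_{\phi})=\infty$. Concretely, I would write $Q_{\phi}(\psi)(z)=(\psi\otimes\psi)\star L_{\phi}(z)=\tr((\psi\otimes\psi)\alpha_z(\check{L_{\phi}}))$ as in the proof of Proposition \ref{prop:moyalforcohen}, and since the integrand is nonnegative by positivity of $Q_{\phi}$, the integral is well-defined in $[0,\infty]$ regardless of integrability. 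The idea is to approximate the positive operator $\check{L_{\phi}}=PL_{\phi}P$ from below by positive trace class operators $T_k$ with $T_k\nearrow \check{L_{\phi}}$ (for instance truncating the spectral decomposition of the positive operator $L_{\phi}$), apply the already-proven formula to each $T_k$, and pass to the limit by monotone convergence.

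In more detail, for each $k$ let $T_k=\sum_{n=1}^{k}\mu_n (\zeta_n\otimes\zeta_n)$ be a finite-rank truncation of the spectral decomposition of the positive operator $L_{\phi}$, so that $0\leq T_k\leq L_{\phi}$ and $T_k\to L_{\phi}$ strongly with $\tr(T_k)=\sum_{n=1}^{k}\mu_n \to \tr(L_{\phi})=\infty$. Each $T_k$ is trace class, so the corresponding positive Cohen-type distribution satisfies $\iint_{\Rdd}\tr((\psi\otimes\psi)\alpha_z(\check{T_k}))\ dz = \|\psi\|_{L^2}^2\tr(T_k)$ by Proposition \ref{prop:moyalforcohen}. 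Since $0\leq \check{T_k}\leq \check{L_{\phi}}$ and $\psi\otimes\psi$ is positive, pointwise in $z$ we have $\tr((\psi\otimes\psi)\alpha_z(\check{T_k}))\leq \tr((\psi\otimes\psi)\alpha_z(\check{L_{\phi}}))=Q_{\phi}(\psi)(z)$, and moreover the left-hand side increases to the right-hand side as $k\to\infty$ (using continuity of the trace against the fixed positive operator $\psi\otimes\psi$ under the monotone limit $\check{T_k}\nearrow\check{L_{\phi}}$). By the monotone convergence theorem,
\begin{equation*}
  \iint_{\Rdd} Q_{\phi}(\psi)\ dz = \lim_{k\to\infty}\iint_{\Rdd}\tr((\psi\otimes\psi)\alpha_z(\check{T_k}))\ dz = \lim_{k\to\infty}\|\psi\|_{L^2}^2\tr(T_k)=\infty,
\end{equation*}
for any $\psi\neq 0$; for $\psi=0$ the statement is trivial (or one interprets it vacuously), and in fact the displayed conclusion holds for all nonzero $\psi\in\HS$, which is the assertion.

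The step I expect to be the main obstacle is justifying the interchange of limit and integral rigorously, specifically verifying the pointwise monotone convergence $\tr((\psi\otimes\psi)\alpha_z(\check{T_k}))\nearrow \tr((\psi\otimes\psi)\alpha_z(\check{L_{\phi}}))$ for each fixed $z$. This requires knowing that for a fixed positive trace class operator $\psi\otimes\psi$ and an increasing sequence of positive bounded operators $\check{T_k}\nearrow \check{L_{\phi}}$, one has $\tr((\psi\otimes\psi)\check{T_k})\to \tr((\psi\otimes\psi)\check{L_{\phi}})$ in $[0,\infty]$; this is a standard normality property of the trace on positive operators, but it must be invoked carefully since $\check{L_{\phi}}$ is not trace class, so $\tr((\psi\otimes\psi)\alpha_z(\check{L_{\phi}}))$ must be understood as the nonnegative quantity $\inner{\check{L_{\phi}}\pi(z)^*\psi}{\pi(z)^*\psi}$ appearing in Proposition \ref{prop:positivityofcohen}, which is finite for each $z$ precisely because $\check{L_{\phi}}$ is bounded. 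Once this pointwise convergence and the resulting monotone convergence of the integrals are secured, the conclusion follows immediately from the divergence of $\tr(T_k)$.
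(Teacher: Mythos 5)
Your overall strategy --- approximate the positive bounded operator $L_{\phi}$ from below by an increasing sequence of positive trace class operators, apply Proposition \ref{prop:moyalforcohen} to each approximant, and pass to the limit by monotone convergence --- is exactly the ``simple approximation argument'' that the paper's one-sentence proof alludes to, and the analytic core of your argument is sound: the pointwise inequality, the identification of the integrand with $\inner{\check{L_{\phi}}\pi(z)^{*}\psi}{\pi(z)^{*}\psi}$, and the normality of $A\mapsto\tr((\psi\otimes\psi)A)=\inner{A\psi}{\psi}$ under increasing weak limits are all correct (the last is immediate because $\psi\otimes\psi$ has rank one).

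The gap is in the construction of the approximating sequence. You define $T_k=\sum_{n=1}^{k}\mu_n(\zeta_n\otimes\zeta_n)$ as ``a finite-rank truncation of the spectral decomposition of the positive operator $L_{\phi}$'', but an operator in $\bo\setminus\tco$ need not be compact, and a non-compact positive operator has no such discrete decomposition. The corollary does cover this case: for example $L_{\phi}=I$ (i.e.\ $\phi\equiv 1$, for which $Q_{\phi}(\psi)\equiv\|\psi\|_{L^2}^{2}$ and the conclusion is plainly true), or any positive bounded operator with continuous spectrum. As written, your proof therefore only handles compact $L_{\phi}$. The repair is routine but needs to be made: set $B=L_{\phi}^{1/2}$, fix an orthonormal basis $\{e_n\}_{n\in\N}$ with associated finite-rank projections $Q_k$, and take $T_k=BQ_kB$. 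These are positive, of rank at most $k$, increasing (since $\inner{T_k\eta}{\eta}=\|Q_kB\eta\|_{L^2}^{2}$), satisfy $T_k\leq L_{\phi}$ and $\inner{T_k\eta}{\eta}\nearrow\inner{L_{\phi}\eta}{\eta}$ for every $\eta\in\HS$, and $\tr(T_k)=\tr(Q_kL_{\phi})=\sum_{n=1}^{k}\inner{L_{\phi}e_n}{e_n}\to\tr(L_{\phi})=\infty$; with this choice your monotone convergence argument goes through verbatim. (Alternatively one can avoid operator approximation entirely: writing $C=\check{L_{\phi}}^{1/2}$ one has $Q_{\phi}(\psi)(z)=\|C\pi(z)^{*}\psi\|_{L^2}^{2}=\sum_{n}|V_{Ce_n}\psi(z)|^{2}$, and Tonelli together with Moyal's identity gives $\iint_{\Rdd}Q_{\phi}(\psi)\,dz=\|\psi\|_{L^2}^{2}\sum_{n}\|Ce_n\|_{L^2}^{2}=\|\psi\|_{L^2}^{2}\tr(L_{\phi})$ directly.) Your remark that the conclusion fails for $\psi=0$ is correct; that is an imprecision in the statement of the corollary rather than in your argument.
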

\begin{proof}
	A simple approximation argument shows that the relation $\iint_{\Rdd} Q_{\phi}(\psi) \ dz = \|\psi\|^2_{L^2} \tr(L_{\phi})$ actually holds when $L_{\phi}$ is any bounded \textit{positive} operator on $\HS$, where $\tr(L_{\phi})=\infty$ if $L_{\phi}\notin \tco$. 
\end{proof}

\subsection{Characterization of positive Cohen's class distributions with correct total energy property} \label{sec:nicecohen}
The previous two subsections have introduced two desirable properties in a Cohen's class distribution $Q_{\phi}$, namely positivity and $\iint_{\Rdd} Q_{\phi}(\psi)(z) \ dz=\|\psi\|_{L^2}^2$ for any $\psi \in \HS$. Using the results from these subsections, we may now characterize those Cohen's class distributions with both these properties. In short, they are all given as linear combinations of the spectrogram.
\begin{thm} \label{thm:nicecohen}
Let $Q_{\phi}$ be a Cohen's class distribution. $Q_{\phi}$ is positive and has the correct total energy property
if and only if the Weyl transform $L_{\phi}$ is a positive trace class operator with $\tr(L_{\phi})=1$. If this is the case, there exists an orthonormal basis $\{\varphi_n\}_{n\in \N}$ in $\HS$ and a sequence $\{\lambda_n\}_{n\in \N}$ of positive numbers with $\sum_{n=1}^{\infty}\lambda_n=1$ such that
\begin{equation*}
  Q_{\phi}(\psi)(z) =\sum_{n=1}^{\infty}\lambda_n |V_{\varphi_n}\psi|^2(z),
\end{equation*}
and this sum converges uniformly for each $\psi\in \HS$. 
\end{thm}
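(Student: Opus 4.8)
The plan is to handle the biconditional first and then peel off the explicit expansion, using throughout the identification $Q_{\phi}(\psi)=(\psi\otimes\psi)\star L_{\phi}$ from Proposition \ref{prop:cohensclassasconvolutions} together with the three facts already recorded for this convolution: the positivity dictionary of Proposition \ref{prop:positivityofcohen}, the energy--trace identity of Proposition \ref{prop:moyalforcohen}, and its divergent counterpart, Corollary \ref{cor:energyinfinite}. With these in hand each direction becomes short, and the formula is obtained by diagonalizing the operator.

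The reverse implication is essentially a recombination of earlier results. Assuming $L_{\phi}$ is a positive trace class operator with $\tr(L_{\phi})=1$, Proposition \ref{prop:positivityofcohen} immediately yields positivity of $Q_{\phi}$, while Proposition \ref{prop:moyalforcohen} gives $\iint_{\Rdd}Q_{\phi}(\psi)\,dz=\|\psi\|_{L^2}^2\,\tr(L_{\phi})=\|\psi\|_{L^2}^2$, which is the correct total energy property. For the forward implication I would argue as follows. The total energy identity is posited for \emph{every} $\psi\in\HS$, so $Q_{\phi}(\psi)=(\psi\otimes\psi)\star L_{\phi}$ must be a well-defined integrable function for all such $\psi$; since $\psi\otimes\psi\in\tco$, this confines $L_{\phi}$ to $\bo$ (for an unbounded symbol the convolution of the merely trace class operator $\psi\otimes\psi$ with $L_{\phi}$ need not produce a function for every $\psi\in\HS$; compare Proposition \ref{prop:convschatten}). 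Proposition \ref{prop:positivityofcohen} then upgrades positivity of $Q_{\phi}$ to positivity of $L_{\phi}$. Finally the contrapositive of Corollary \ref{cor:energyinfinite} excludes $L_{\phi}\in\bo\setminus\tco$, since otherwise the total energy would be infinite; hence $L_{\phi}\in\tco$, and Proposition \ref{prop:moyalforcohen} forces $\tr(L_{\phi})=1$.

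For the explicit expansion, suppose $L_{\phi}$ is positive trace class with unit trace. Because the convolution enters through $\check{L_{\phi}}=PL_{\phi}P$, which is again positive, trace class, and of unit trace, I would diagonalize \emph{that} operator: by Proposition \ref{prop:singval} (the spectral theorem for positive operators) one writes $\check{L_{\phi}}=\sum_{n}\lambda_{n}\,\varphi_{n}\otimes\varphi_{n}$ with $\{\varphi_{n}\}$ an orthonormal system of eigenvectors (adjoining an orthonormal basis of $\ker\check{L_{\phi}}$ if one insists on a full basis), $\lambda_{n}>0$, and $\sum_{n}\lambda_{n}=\tr(\check{L_{\phi}})=1$, the sum converging in $\tco$. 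Inserting this into $Q_{\phi}(\psi)(z)=\tr\!\big((\psi\otimes\psi)\,\alpha_{z}(\check{L_{\phi}})\big)$ and using continuity of the trace and of $\alpha_{z}$ on $\tco$ to interchange sum and trace, each term reduces by the rank-one computation $\tr\big((\psi\otimes\psi)\,\alpha_{z}(\varphi_{n}\otimes\varphi_{n})\big)=|\inner{\psi}{\pi(z)\varphi_{n}}|^{2}=|V_{\varphi_{n}}\psi(z)|^{2}$ (equivalently Lemma \ref{lem:spectrogramasconvolution}), giving $Q_{\phi}(\psi)(z)=\sum_{n}\lambda_{n}|V_{\varphi_{n}}\psi(z)|^{2}$. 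Uniform convergence in $z$ is then a Weierstrass $M$-test: by Cauchy--Schwarz $0\le\lambda_{n}|V_{\varphi_{n}}\psi(z)|^{2}\le\lambda_{n}\|\psi\|_{L^2}^{2}$, and $\sum_{n}\lambda_{n}\|\psi\|_{L^2}^{2}=\|\psi\|_{L^2}^{2}<\infty$.

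The one genuinely delicate point is the forward direction, specifically the passage to trace class. Everything hinges on Corollary \ref{cor:energyinfinite} ruling out the bounded but non-trace-class case, and on the observation that demanding the total energy identity on all of $\HS$ already pins $L_{\phi}$ inside $\bo$; the approximation argument underlying Corollary \ref{cor:energyinfinite}, which reduces the energy--trace identity for a general bounded positive operator to the trace class case of Proposition \ref{prop:moyalforcohen}, is precisely what makes this rigorous. By contrast, the reverse direction and the spectral expansion are routine once the convolution identity and the positivity dictionary are available.
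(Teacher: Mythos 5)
Your proposal is correct and follows essentially the same route as the paper: positivity of $L_{\phi}$ via Proposition \ref{prop:positivityofcohen}, exclusion of $L_{\phi}\in \bo\setminus\tco$ via the energy--trace identity behind Corollary \ref{cor:energyinfinite}, and the spectral decomposition of the (checked) operator combined with Lemma \ref{lem:spectrogramasconvolution}. The only differences are cosmetic --- you diagonalize $\check{L_{\phi}}$ directly where the paper diagonalizes $L_{\phi}$ and then applies $P$, and you get uniform convergence from a Weierstrass $M$-test where the paper invokes continuity of the convolution $\tco\times\bo\to L^{\infty}(\Rdd)$.
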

\begin{proof}
	The main idea is to study $Q_{\phi}$ using the Weyl transform $L_{\phi}$, since $Q_{\phi}(\psi)=(\psi\otimes \psi)\star L_{\phi}$. Since $Q_{\phi}$ is positive, proposition \ref{prop:positivityofcohen} gives that $L_{\phi}$ is a positive operator. As we remarked in the proof of corollary \ref{cor:energyinfinite}, we then have that 
	\begin{equation*}
  \iint_{\Rdd} Q_{\phi}(\psi) \ dz = \|\psi\|^2_{L^2} \tr(L_{\phi}),
\end{equation*}
 for $\psi \in \HS$, where $\tr(L_{\phi})=\infty$ if $L_{\phi}$ is not trace class. We easily see from this expression that we need $L_{\phi}\in \tco$ with $\tr(L_{\phi})=1$ in order to have that $\iint_{\Rdd} Q_{\phi}(\psi)(z) \ dz=\|\psi\|_{L^2}^2$. Hence $L_{\phi}$ is a positive trace class operator, so we may use the singular value decomposition of $L_{\phi}$ to write
 \begin{equation*}
  L_{\phi}=\sum_{n=1}^{\infty} \lambda_n \varphi_n^{\prime} \otimes \varphi_n^{\prime}
\end{equation*}
where $\{\varphi_n^{\prime}\}_{n\in \N}$ is an orthonormal sequence in $\HS$ and $\{\lambda_n\}_{n\in \N}$ is a sequence of positive numbers with $\sum_{n=1}^{\infty}\lambda_n=\tr(L_{\phi})=1$. This sum of operators converges  to $L_{\phi}$ in the operator norm on $\bo$. In order to make the end results more aesthetic we define $\varphi_n=P\varphi_n^{\prime}$ for each $n\in \N$, so that $\varphi_n^{\prime}=\check{\varphi}_n$. The sequence $\{\varphi_n\}_{n\in \N}$ is clearly also orthonormal, and we have that
 \begin{equation*}
  L_{\phi}=\sum_{n=1}^{\infty} \lambda_n \check{\varphi}_n \otimes \check{\varphi}_n.
\end{equation*}
Now recall that $(\psi \otimes \psi)\star (\check{\varphi}_n \otimes \check{\varphi}_n)=|V_{\varphi_n}\psi|^2$ by lemma \ref{lem:spectrogramasconvolution} . For each $\psi \in \HS$ the operator $\psi \otimes \psi$ is trace class, and since the convolution of operators is continuous $\tco \times \bo\to L^{\infty}(\Rdd)$ by proposition \ref{prop:convschatten} we get that
\begin{align*}
  Q_{\phi}(\psi)&= L_{\phi}\star (\psi\otimes \psi) \\
  &= \left(\lim_{N\to \infty}\sum_{n=1}^{N} \lambda_n \check{\varphi}_n \otimes \check{\varphi}_n\right) \star (\psi\otimes \psi) \\
  &=\lim_{N\to \infty} \left(\sum_{n=1}^{N} \lambda_n \check{\varphi}_n \otimes \check{\varphi}_n\star (\psi\otimes \psi)\right)  \\
  &= \lim_{N\to \infty} \left(\sum_{n=1}^{N} \lambda_n |V_{\varphi_n}\psi|^2\right) 
\end{align*}
with convergence in the norm of $L^{\infty}(\Rdd)$, i.e. uniform convergence.
\end{proof}
 A restatement of the previous theorem is that the Cohen's class distributions $Q$ that are positive with the correct total energy property are exactly given by
\begin{equation*}
  Q(\psi)=(\psi\otimes \psi) \star S_Q
\end{equation*}
for some positive operator $S_{Q}\in \tco$ with $\tr(S_{Q})=1$. Operators of the form $S_Q$ are also known as density operators and play a central role in quantum mechanics, see \cite{deGosson:2017} for a modern and rigorous treatment. 

\begin{exmp}
	\begin{enumerate}
		\item The spectrogram $|V_{\varphi}\psi(z)|^2=(\psi \otimes \psi)\star (\check{\varphi}\otimes \check{\varphi})$ for $\varphi\in \HS$ with $\|\varphi\|_{L^2}=1$ is both positive and has the correct total energy property. This agrees with theorem \ref{thm:nicecohen} since the operator $\check{\varphi}\otimes \check{\varphi}$ is positive and $\tr(\check{\varphi}\otimes \check{\varphi})=\inner{\check{\varphi}}{\check{\varphi}}=1$.
		\item The Wigner distribution $W(\psi)=(\psi\otimes \psi) \star 2^d P$ is not positive by proposition \ref{prop:positivityofcohen}, as $P$ is not a positive operator. The correct total energy property holds for some, but not all $\psi \in \HS$ \cite{Grochenig:2001}.  
	\item Using a result due to Gracia-Bond\'ia and V\'arilly \cite{Gracia:1988}, we may now give a characterization of the Gaussians that give positive Cohen's class distributions with the correct total energy property. To make this precise, let $\Phi_M$ be the normalized Gaussian
	\begin{equation*}
  \Phi_M(z)=2^n \frac{1}{\det(M)^{1/4}}e^{-z^T\cdot  M \cdot z}    \text{ for $z\in \Rdd$},
\end{equation*}
where $M$ is a $2d\times 2d$-matrix. The result of \cite{Gracia:1988} states that the Weyl transform $L_{\Phi_M}$ is a positive trace class operator if and only if 
\begin{equation*}
  M=S^T \Lambda S,
\end{equation*}
where $S$ is a symplectic matrix and $\Lambda$ is diagonal matrix of the form \[\Lambda=\text{diag}(\lambda_1,\lambda_2,\dots,
\lambda_d,\lambda_1,\lambda_2,\dots,\lambda_d)\] with $0< \lambda_i\leq 1$. Hence these Gaussians $\Phi_M$ are exactly the Gaussians such that the Cohen's class distribution $Q_{\Phi_M}$ is positive with the correct total energy property. 
Note that this provides examples of positive Cohen's class distributions with the correct total energy property that are \textit{not} spectrograms, since some of the Gaussians above do not correspond to operators of the form $\check{\varphi}\otimes \check{\varphi}$ under the Weyl transform \cite{deGosson:2017,Gracia:1988}. These results are also linked with the symplectic structure of the phase space \cite{deGosson:2009}.
	\end{enumerate}
\end{exmp}

\subsection{Uncertainty principles for Cohen's class}
By using the connection between Cohen's class and convolutions of operators we obtain a weak uncertainty principle for Cohen's class distributions. The result is modeled on uncertainty principles for the spectrogram and Wigner distribution \cite{Grochenig:2001}.
\begin{prop}
	Let $S\in \bo$ and let $Q_S$ be the Cohen's class distribution determined by $Q_S(\psi)=(\psi\otimes \psi) \star S$ for $\psi \in \HS$. 
 If $\Omega\subset \Rdd$ is a measurable subset such that
	\begin{equation*}
  \iint_{\Omega}|Q_S(\psi)| \ dz \geq (1-\epsilon) \|S\|_{\bo}
\end{equation*}
for some $\psi\in \HS$ with $\|\psi\|_{L^2}=1$ and $\epsilon \geq 0$, then
\begin{equation*}
  \mu(\Omega)\geq 1-\epsilon.
\end{equation*}	
\end{prop}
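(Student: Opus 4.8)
The plan is to combine a uniform pointwise bound on $Q_S(\psi)$ with the most elementary integral estimate. The only substantive ingredient is the $L^\infty$-estimate for the convolution of an operator with a rank-one operator, and this is already at our disposal.

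First I would record the pointwise bound. Since $\|\psi\|_{L^2}=1$, the rank-one operator $\psi\otimes\psi$ lies in $\tco=\SC^1$ with $\|\psi\otimes\psi\|_{\SC^1}=\|\psi\|_{L^2}^2=1$. Reading $Q_S(\psi)=(\psi\otimes\psi)\star S$ as the convolution of two operators and applying the $p=\infty$ case of Proposition \ref{prop:cohenandlp} (equivalently, Proposition \ref{prop:convschatten} with the triple $p=1$, $q=\infty$, $r=\infty$, which satisfies $\tfrac1p+\tfrac1q=1+\tfrac1r$) gives
\begin{equation*}
\|Q_S(\psi)\|_{L^\infty}\leq \|\psi\otimes\psi\|_{\SC^1}\,\|S\|_{\bo}=\|S\|_{\bo}.
\end{equation*}
In particular $|Q_S(\psi)(z)|\leq \|S\|_{\bo}$ for almost every $z\in\Rdd$.

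Next I would integrate this bound over $\Omega$ and compare with the hypothesis. The pointwise estimate yields
\begin{equation*}
\iint_{\Omega}|Q_S(\psi)(z)|\ dz\leq \iint_{\Omega}\|S\|_{\bo}\ dz=\mu(\Omega)\,\|S\|_{\bo},
\end{equation*}
while the assumption reads $(1-\epsilon)\|S\|_{\bo}\leq\iint_{\Omega}|Q_S(\psi)|\ dz$. Chaining the two inequalities gives $(1-\epsilon)\|S\|_{\bo}\leq\mu(\Omega)\,\|S\|_{\bo}$, and dividing through by $\|S\|_{\bo}$ produces $\mu(\Omega)\geq 1-\epsilon$.

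There is no genuine obstacle here: the result is immediate once the $L^\infty$-bound is in hand, and the heart of the matter is entirely contained in Proposition \ref{prop:cohenandlp}. The single point deserving a word of care is the degenerate case $\|S\|_{\bo}=0$, in which $Q_S\equiv 0$ and the hypothesis carries no information; I would therefore assume $\|S\|_{\bo}>0$ throughout (the statement being vacuous otherwise), which is exactly what legitimizes the final division.
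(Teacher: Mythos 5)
Your argument is correct and is essentially identical to the paper's proof: both derive the bound $\|Q_S(\psi)\|_{L^\infty}\leq\|\psi\|_{L^2}^2\|S\|_{\bo}$ from Proposition \ref{prop:cohenandlp}, integrate it over $\Omega$, and divide by $\|S\|_{\bo}$. Your extra remark about the degenerate case $\|S\|_{\bo}=0$ is a reasonable point of care that the paper leaves implicit.
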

\begin{proof}
By proposition \ref{prop:cohenandlp}, we know that $Q_S(\psi)\in L^{\infty}(\Rdd)$ with $\|Q_S(\psi)\|_{L^{\infty}}\leq\|\psi\|_{L^2}^2 \|S\|_{\bo}$. It follows that
\begin{equation*}
  \iint_{\Omega}|Q_S(\psi)| \ dz\leq \|\psi\|_{L^2}^2 \|S\|_{\bo} \iint_{\Omega} \ dz=\|\psi\|_{L^2}^2 \|S\|_{\bo} \mu(\Omega).
\end{equation*}
	Hence if 
	\begin{equation*}
  \iint_{\Omega}|Q_S(\psi)| \ dz \geq (1-\epsilon) \|\psi\|_{L^2}^2 \|S\|_{\bo},
\end{equation*}
we must have that 
\begin{equation*}
  \|\psi\|_{L^2}^2 \|S\|_{\bo} \mu(\Omega) \geq (1-\epsilon) \|\psi\|_{L^2}^2 \|S\|_{\bo},
\end{equation*}
and therefore $\mu(\Omega)\geq 1-\epsilon$.
\end{proof}
\subsection{Phase retrieval for Cohen's class distribution}
Given a Cohen's class distribution $Q_{\phi}$, one might ask whether any $\psi\in \HS$ is uniquely determined by $Q_{\phi}(\psi)$. Since proposition \ref{prop:cohensclassasconvolutions} shows that $\psi$ enters the expression for $Q_{\phi}(\psi)$ via $\psi\otimes \psi$, we can at most hope that $\psi\otimes \psi$ is uniquely determined by $Q_{\phi}(\psi)$. It is simple to show that $\psi_1\otimes \psi_1=\psi_2\otimes \psi_2$ if and only if $\psi_1=e^{i\alpha} \psi_2$ for some $\alpha\in \R$, so we will ask whether $\psi$ is determined by $Q_{\phi}(\psi)$ up to some constant phase $e^{i\alpha}$ with $\alpha\in \R$.  In the special case where $L_{\phi}\in \tco$, a rather weak condition on $\phi$ is enough to ensure this.
\begin{prop} \label{prop:reconstructingcohen}
	Let $\phi \in L^2(\Rdd)$ be a function such that the Weyl transform $L_{\phi}$ is trace class. Assume that the set $\{z\in \Rdd:\F_{\sigma}\phi(z)=0\}$ has dense complement in $\Rdd$. If $Q_{\phi}(\psi_1)=Q_{\phi}(\psi_2)$ for $\psi_1,\psi_2\in \HS$, then $\psi_1=e^{i \alpha}\psi_2$ for some constant $\alpha\in \R$.
\end{prop}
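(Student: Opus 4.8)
The plan is to reduce the statement to the Tauberian theorem (Theorem \ref{thm:wiener}) applied to the operator $L_{\phi}$. First I would use Proposition \ref{prop:cohensclassasconvolutions} to write $Q_{\phi}(\psi_j) = (\psi_j \otimes \psi_j) \star L_{\phi}$ for $j=1,2$. Setting $T := \psi_1 \otimes \psi_1 - \psi_2 \otimes \psi_2$, the hypothesis $Q_{\phi}(\psi_1)=Q_{\phi}(\psi_2)$ together with the bilinearity of the convolution gives $T \star L_{\phi}=0$. Note that $T\in \tco$, being a difference of two rank-one operators, so this is an identity to which the operator Tauberian theory applies.

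Next I would identify the zero set in the hypothesis with the zero set of $\F_W(L_{\phi})$. By definition the Weyl symbol of $L_{\phi}$ is $\phi$, and by Proposition \ref{prop:fwspreading}(2) the Weyl symbol of any $S\in \tco$ equals $\F_{\sigma}\F_W(S)$. Applying $\F_{\sigma}$ to the identity $\phi=\F_{\sigma}\F_W(L_{\phi})$ and using $\F_{\sigma}^2=I$ yields $\F_W(L_{\phi})=\F_{\sigma}\phi$. Since $L_{\phi}\in \tco$, the function $\F_W(L_{\phi})$ is continuous, so $\{z\in \Rdd : \F_W(L_{\phi})(z)=0\}=\{z : \F_{\sigma}\phi(z)=0\}$ has dense complement by assumption, and this pointwise zero set is genuinely well-defined because $\phi\in L^2(\Rdd)$.

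Now comes the heart of the argument: with $S:=L_{\phi}\in \tco$, the dense-complement condition is precisely statement (c1) of Theorem \ref{thm:wiener}, and the equivalent statement (c4) asserts that $S\star T=0$ for $T\in \tco$ forces $T=0$. Since the convolution of operators is commutative, $T\star L_{\phi}=L_{\phi}\star T=0$, hence $T=0$; that is, $\psi_1\otimes \psi_1=\psi_2\otimes \psi_2$. Finally I would invoke the elementary fact recalled just before the statement, namely that $\psi_1\otimes \psi_1=\psi_2\otimes \psi_2$ holds if and only if $\psi_1=e^{i\alpha}\psi_2$ for some $\alpha\in \R$, which completes the proof.

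The argument is essentially a translation through the dictionary provided by the earlier results, so there is no deep obstacle. The one point requiring genuine care is the identification $\F_W(L_{\phi})=\F_{\sigma}\phi$, which is where the trace-class hypothesis on $L_{\phi}$ is actually used: it guarantees both that $\F_W(L_{\phi})$ is an honest continuous function (so the zero set is meaningful) and that $L_{\phi}$ is an admissible operator $S\in \tco$ for the part (c) of the Tauberian theorem. Everything else is bookkeeping with the already-established properties of Werner's convolutions.
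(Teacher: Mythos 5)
Your proposal is correct and follows essentially the same route as the paper: both reduce to the injectivity of $T\mapsto T\star L_{\phi}$ on $\tco$ via part (c) of Theorem \ref{thm:wiener}, identify the zero set of $\F_{\sigma}\phi$ with that of $\F_W(L_{\phi})$ through Proposition \ref{prop:fwspreading}, and finish with the rank-one fact $\psi_1\otimes\psi_1=\psi_2\otimes\psi_2 \iff \psi_1=e^{i\alpha}\psi_2$. Your explicit passage through the difference operator $T=\psi_1\otimes\psi_1-\psi_2\otimes\psi_2$ and the remark on continuity of $\F_W(L_{\phi})$ merely spell out details the paper leaves implicit.
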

 \begin{proof}
 	By proposition \ref{prop:cohensclassasconvolutions}, we know that $Q_{\phi}(\psi)=(\psi \otimes \psi)\star L_{\phi}$. If the set $\{z\in \Rdd:\F_{W}L_{\phi}(z)=0\}$ has dense complement in $\Rdd$, we get from theorem \ref{thm:wiener} that the mapping $\psi\otimes \psi \mapsto (\psi\otimes \psi)\star L_{\phi}=Q_{\phi}(\psi)$ is injective. Hence, if $Q_{\phi}(\psi_1)=Q_{\phi}(\psi_2)$, then $\psi_1\otimes \psi_1=\psi_2\otimes \psi_2$. By the discussion preceding the proposition this implies that $\psi_1=e^{i\alpha}\psi_2$ for some constant $\alpha\in \R$. Furthermore, we know by proposition \ref{prop:fwspreading} that $\phi=\F_{\sigma}\F_W(L_{\phi})$, and applying $\F_{\sigma}$ to this equation we obtain $\F_{\sigma}\phi=\F_W(L_{\phi})$, so the sets $\{z:\in \Rdd:\F_{W}L_{\phi}(z)=0\}$ and $\{z:\in \Rdd:\F_{\sigma}\phi(z)=0\}$ are equal.
 \end{proof}
 When $\phi=W(\varphi,\varphi)$ for some $\varphi\in \HS$, the previous result gives a condition on the window $\varphi\in \HS$ to ensure that any $\psi\in \HS$ is determined by its spectrogram $|V_{\varphi}\psi|^2$ up to a phase $e^{i\alpha}$ for $\alpha\in \R$. This is the problem of \textit{phase retrieval} for the spectrogram \cite{Grohs:2017}.
\begin{cor} \label{cor:reconstructspectrogram}
 If $\varphi\in \HS$ and the set $\{z:\in \Rdd:A(\varphi,\varphi)(z)=0\}$ has dense complement in $\Rdd$ and $|V_{\varphi}\psi_1|=|V_{\varphi}\psi_2|$, then $\psi_1=e^{i \alpha}\psi_2$ for some constant $\alpha\in \R$.
\end{cor}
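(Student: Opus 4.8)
The plan is to obtain this statement as the special case of Proposition \ref{prop:reconstructingcohen} in which the Cohen's class distribution is the spectrogram. First I would recall from Example \ref{exmp:cohen} (or directly from Lemma \ref{lem:spectrogramasconvolution}) that the spectrogram is the Cohen's class distribution
\begin{equation*}
  |V_{\varphi}\psi|^2=(\psi\otimes \psi)\star (\check{\varphi}\otimes \check{\varphi})=Q_{\phi}(\psi),
\end{equation*}
where $\phi$ is the Weyl symbol of $L_{\phi}=\check{\varphi}\otimes \check{\varphi}$, namely $\phi=W(\check{\varphi},\check{\varphi})$. The hypotheses of Proposition \ref{prop:reconstructingcohen} are then readily checked: $L_{\phi}=\check{\varphi}\otimes \check{\varphi}$ is a rank-one operator and hence trace class, and $\phi=W(\check{\varphi},\check{\varphi})\in L^2(\Rdd)$ because the cross-Wigner distribution of two $L^2$-functions is square-integrable.

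The only point requiring care is to match the zero set appearing in Proposition \ref{prop:reconstructingcohen}, namely $\{z:\F_{\sigma}\phi(z)=0\}$, with the zero set of $A(\varphi,\varphi)$ in the hypothesis. By Lemma \ref{lem:FWrankone} and the identity $\F_{\sigma}\phi=\F_W(L_{\phi})$ established in the proof of the proposition, we have $\F_{\sigma}\phi=\F_W(\check{\varphi}\otimes \check{\varphi})=A(\check{\varphi},\check{\varphi})$. To pass from $A(\check{\varphi},\check{\varphi})$ to $A(\varphi,\varphi)$ I would use the parity relation $P\pi(z)P=\pi(-z)$ together with the unitarity of $P$, which yields $V_{\check{\varphi}}\check{\varphi}(z)=V_{\varphi}\varphi(-z)$ and hence, after accounting for the phase factor in the definition of $A$, the symmetry
\begin{equation*}
  A(\check{\varphi},\check{\varphi})(z)=A(\varphi,\varphi)(-z).
\end{equation*}
Since $z\mapsto -z$ is a homeomorphism of $\Rdd$, the zero set of $A(\check{\varphi},\check{\varphi})$ is the reflection of that of $A(\varphi,\varphi)$, so one has dense complement exactly when the other does. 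The hypothesis on $A(\varphi,\varphi)$ therefore supplies precisely the density assumption needed to invoke the proposition.

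Finally I would conclude: the assumption $|V_{\varphi}\psi_1|=|V_{\varphi}\psi_2|$ is equivalent to $|V_{\varphi}\psi_1|^2=|V_{\varphi}\psi_2|^2$, that is $Q_{\phi}(\psi_1)=Q_{\phi}(\psi_2)$, and Proposition \ref{prop:reconstructingcohen} then gives $\psi_1=e^{i\alpha}\psi_2$ for some $\alpha\in \R$. The main obstacle is not deep but purely a matter of bookkeeping: correctly identifying the operator $L_{\phi}$ attached to the spectrogram and tracking the reflection relating the zero sets of $A(\check{\varphi},\check{\varphi})$ and $A(\varphi,\varphi)$. Once these identifications are in place the corollary is an immediate specialization of the preceding proposition.
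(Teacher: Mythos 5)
Your proof is correct and follows essentially the same route as the paper: both specialize Proposition \ref{prop:reconstructingcohen} to $\phi=W(\check{\varphi},\check{\varphi})$ and then identify the zero set of $\F_{\sigma}\phi=A(\check{\varphi},\check{\varphi})$ with that of $A(\varphi,\varphi)$. The only cosmetic difference is that you use the reflection identity $A(\check{\varphi},\check{\varphi})(z)=A(\varphi,\varphi)(-z)$ while the paper uses $A(\check{\varphi},\check{\varphi})=\overline{A(\varphi,\varphi)}$; these are equivalent (indeed both hold), and either transfers the dense-complement hypothesis as required.
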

\begin{proof}
Let $\phi=W(\check{\varphi},\check{\varphi})$. As we saw in example \ref{exmp:cohen}, we then have 
\begin{equation*}
  Q_{\phi}(\psi)=|V_{\varphi}\psi|^2.
\end{equation*}
The result now follows from proposition \ref{prop:reconstructingcohen} by noting that $\F_{\sigma}W(\check{\varphi},\check{\varphi})=A(\check{\varphi},\check{\varphi})=\overline{A(\varphi,\varphi)}$, where the last equality follows from the definition of $A(\varphi,\varphi)$.
\end{proof}
\begin{rem}
	This corollary appears in \cite[Remark A.4]{Grohs:2017} under the stronger assumption that the set of zeros of the ambiguity function has Lebesgue measure 0. The same paper also proves that when $\varphi\in \mathcal{S}(\Rdd)$ and its ambiguity function has \textit{no} zeros, then the same result holds for $\psi_1,\psi_2 \in \mathcal{S}^{\prime}(\Rdd)$ \cite[Thm. 2.3]{Grohs:2017} -- a result that is referred to as "folklore" by \cite{Grohs:2017}.
\end{rem}
\section{Multiwindow STFTs and Cohen's class} \label{sec:connecting}
In sections \ref{sec:rigour} and \ref{sec:generalizedlocalization} we saw that an operator $S$ can be used to assign to a function $f$ on $\Rdd$ a multiwindow STFT-filter $f\star S$. On the other hand we saw in section \ref{sec:cohenand convolution} that $S$ defines a Cohen's class distribution $Q_S$ by $Q_S(\psi)(z)=((\psi \otimes \psi)\star \check{S})(z)$. In fact, there is a close connection between operators $f\star S$ and Cohen class distribution $Q_{\check{S}}$.

\begin{prop} \label{prop:cohenandmultiwindow}
	Let $S\in \tco$, $f\in L^{\infty}(\Rdd)$ and $\psi \in \HS$. Let $Q_{\check{S}}$ be the Cohen's class distribution $Q_{S}(\psi)=(\psi \otimes \psi)\star \check{S}$. Then
	\begin{equation} \label{eq:cohenandfilter1}
  \inner{f\star S}{\psi\otimes \psi}_{B(L^2),\tco}=\inner{f}{Q_{S}(\psi)}_{L^{\infty},L^1}. 
\end{equation}
  More explicitly
  \begin{equation} \label{eq:cohenandfilter2}
  \inner{(f\star S) \psi}{\psi}=\iint_{\Rdd} f(z) Q_{S}(\psi)(z) \ dz.
\end{equation}
\end{prop}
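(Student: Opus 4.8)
The plan is to prove the identity \eqref{eq:cohenandfilter1} by reducing everything to the duality relation \eqref{eq:convolutionsofbounded} that defines $f\star S$ for $f\in L^{\infty}(\Rdd)$ and $S\in \tco$, and then to extract \eqref{eq:cohenandfilter2} as the explicit form of the bracket $\inner{f\star S}{\psi\otimes \psi}_{\bo,\tco}$. The key observation is that $\psi\otimes\psi$ is itself a trace-class operator (with $\|\psi\otimes\psi\|_{\tco}=\|\psi\|_{L^2}^2$), so it is a legitimate test operator $T$ in \eqref{eq:convolutionsofbounded}. Taking $T=\psi\otimes\psi$ there gives directly
\begin{equation*}
\inner{f\star S}{\psi\otimes\psi}_{\bo,\tco}=\inner{f}{\check{S}\star(\psi\otimes\psi)}_{L^{\infty},L^1}.
\end{equation*}
The whole proof then rests on recognizing the function $\check{S}\star(\psi\otimes\psi)$ appearing on the right as $Q_{S}(\psi)$.

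The main step is therefore to identify $\check{S}\star(\psi\otimes\psi)$ with $Q_S(\psi)$. By the definition of $Q_S$ in the statement, $Q_S(\psi)=(\psi\otimes\psi)\star\check{S}$, so what I need is the commutativity of the operator--operator convolution, namely $\check{S}\star(\psi\otimes\psi)=(\psi\otimes\psi)\star\check{S}$. This is exactly the commutativity of $\star$ noted in the excerpt (the remark after Proposition \ref{prop:convschatten} states that commutativity and bilinearity follow straight from the definitions). With this in hand the right-hand side of the displayed duality is precisely $\inner{f}{Q_S(\psi)}_{L^{\infty},L^1}$, which establishes \eqref{eq:cohenandfilter1}. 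I should also note for legitimacy that $\check S\star(\psi\otimes\psi)\in L^1(\Rdd)$ by Lemma \ref{lem:werner} (both factors are trace class), so the $L^{\infty}$--$L^1$ pairing is well-defined.

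For \eqref{eq:cohenandfilter2} I would simply write out both sides of \eqref{eq:cohenandfilter1} as explicit integrals and traces. On the left, the Schatten duality gives $\inner{f\star S}{\psi\otimes\psi}_{\bo,\tco}=\tr\big((f\star S)(\psi\otimes\psi)\big)$, and a direct computation from the definition of the rank-one operator $\psi\otimes\psi$ shows $\tr\big(A(\psi\otimes\psi)\big)=\inner{A\psi}{\psi}$ for any bounded $A$; applying this with $A=f\star S$ yields $\inner{(f\star S)\psi}{\psi}$. On the right, the $L^{\infty}$--$L^1$ duality bracket is by definition $\iint_{\Rdd}f(z)Q_S(\psi)(z)\,dz$. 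Equating the two explicit forms gives \eqref{eq:cohenandfilter2}.

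I do not expect any serious obstacle here: the statement is essentially an unwinding of definitions, and the only things to be careful about are (i) invoking commutativity of $\star$ in the correct direction to match $Q_S$, and (ii) the elementary trace identity $\tr(A(\psi\otimes\psi))=\inner{A\psi}{\psi}$, which follows by expanding the trace in an orthonormal basis containing $\psi/\|\psi\|$. The mild subtlety worth flagging is ensuring all pairings are between genuinely dual objects — $f\star S\in\bo$ paired with $\psi\otimes\psi\in\tco$, and $f\in L^{\infty}$ paired with the $L^1$ function $Q_S(\psi)$ — but each of these memberships is guaranteed by the results quoted above.
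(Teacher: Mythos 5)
Your proposal is correct and follows essentially the same route as the paper's own proof: specialize the defining duality relation for $f\star S$ to $T=\psi\otimes\psi$, identify $\check{S}\star(\psi\otimes\psi)$ with $Q_S(\psi)$, and unwind the trace pairing $\tr((f\star S)(\psi\otimes\psi))=\inner{(f\star S)\psi}{\psi}$ via an orthonormal basis. The only cosmetic difference is that you make the appeal to commutativity of $\star$ explicit where the paper uses it silently.
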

\begin{proof}
	When $f\in L^{\infty}(\Rdd)$, the operator $f\star S$ is defined by the relation
	\begin{equation*}
  \inner{f\star S}{T}_{B(L^2),\tco}=\inner{f}{\check{S}\star T}_{L^{\infty},L^1}
\end{equation*}
for any $T\in \tco$, as we noted in equation \eqref{eq:convolutionsofbounded}. In particular this must hold for $T=\psi \otimes \psi$. Since $Q_{S}(\psi)=\check{S}\star (\psi\otimes \psi)$, we get that 
\begin{equation*}
  \inner{f\star S}{\psi\otimes \psi}_{B(L^2),\tco}=\inner{f}{\check{S}\star (\psi \otimes \psi)}_{L^{\infty},L^1}=\inner{f}{Q_{S}(\psi)}_{L^{\infty},L^1}.
\end{equation*}
To prove equation \eqref{eq:cohenandfilter2} we recall that the duality action of $\bo$ on $\tco$ is given by $$\inner{f\star S}{\psi\otimes \psi}_{B(L^2),\tco}=\tr(f\star S(\psi\otimes \psi)).$$
By picking an orthonormal basis $\{e_n\}_{n\in \N}$ for $\HS$ we calculate that
\begin{align*}
  \tr((f\star S)(\psi\otimes \psi))&=\sum_{n\in \N} \inner{(f\star S)(\psi\otimes \psi)e_n}{e_n} \\
  &= \sum_{n\in \N} \inner{e_n}{\psi} \inner{(f\star S)\psi}{e_n} \\
  &= \inner{(f\star S) \psi}{\psi},
\end{align*}
where we have used Parseval's equality to remove the sum.
\end{proof}
\begin{rem}
The same result holds whenever $f\star S$ is defined in proposition \ref{prop:convschatten} and the brackets in equation \eqref{eq:cohenandfilter1} may be interpreted as duality. In the most general case we have $S \in \tempdist$, and we then have for $f\in \mathcal{S}(\Rdd)$ and $\psi \in \mathcal{S}(\Rd)$ that 
	\begin{equation*}
  \inner{(f\star S) \psi}{\psi}=\inner{f}{Q_{S}(\psi)}_{\mathcal{S}^\prime,\mathcal{S}}
\end{equation*}
where $Q_{S}$ is given by $Q_{S}(\psi)=(\psi \otimes \psi)\star \check{S}$.
\end{rem}
	The proposition shows that the Cohen's class distribution $Q_{S}$ has a naturally associated operator $f\star S$ for any $f\in L^{\infty}(\Rdd)$. The idea of associating operators to Cohen's class distributions has also been considered previously, but in this context it seems to be a novel insight that \textit{both the Cohen's class distribution and the associated operators are given by convolutions with a fixed operator} $S$ (and $\check{S}$).  Previous discussions of such results appear in \cite{Ramanathan:1994,Boggiatto:2010},  and more recently in \cite{Boggiatto:2017,Boggiatto:2018} where the operators $f\star S$ are called Cohen operators. In these references a Cohen's class distribution $Q$ was taken as a starting point, and it was shown that one could associate operators to $Q$ using a version of equation \eqref{eq:cohenandfilter2}.

Proposition \ref{prop:cohenandmultiwindow} generalizes several known relations between pseudodifferential operators and Cohen's class distributions.
\begin{exmp} \label{exmp:cohenoperators}
	\begin{enumerate}
		\item If we pick $S=\varphi \otimes \varphi$ for $\varphi\in L^2(\Rd)$, then $S\in \tco$ and $\check{S}=\check{\varphi}\otimes \check{\varphi}$. For $f\in L^{\infty}(\Rdd)$ the operator $f\star S$ is the localization operator $\mathcal{A}_{f}^{\varphi}$ by proposition \ref{lem:locasconv}. The Cohen's class distribution determined by $S$ is the spectrogram by example \ref{exmp:cohen}
		\begin{equation*}
  Q_S(\psi)(z)=(\psi\otimes \psi) \star (\check{\varphi}\otimes \check{\varphi})(z)=|V_{\varphi}\psi(z)|^2.
\end{equation*}
Equation \eqref{eq:cohenandfilter2} states the familiar relation
\begin{equation*}
  \inner{\mathcal{A}_{f}^{\varphi}\psi}{\psi}= \iint_{\Rdd} f(z)|V_{\varphi}\psi(z)|^2 \ dz.
\end{equation*}
\item For $S=2^dP$ the proposition describes the Weyl calculus. As we observed in example \ref{exmp:cohen} the Cohen's class distribution associated to $2^dP=\left(2^dP\right){\check{\phantom{x}}}$ is the Wigner distribution
\begin{equation*}
  Q_{2^d P}(\psi)=(\psi\otimes \psi) \star 2^dP(z) = W(\psi)(z). 
\end{equation*}
For a function $f\in L^1(\Rdd)$ the operator $f\star 2^dP$ is the Weyl transform $L_f$ of $f$: the Weyl symbol of $2^dP$ is $\delta_0$\cite{Grossmann:1976} and hence the Weyl symbol of $f\star 2^dP$ is $f$ by proposition \ref{prop:weylsymbolofmultiwindow}. Equation \eqref{eq:cohenandfilter2}  becomes
\begin{equation*}
  \inner{L_f \psi}{\psi}=\iint_{\Rdd} f(z) W(\psi)(z) \ dz
\end{equation*}
which is the equation we used to define the Weyl transform $L_f$.
\item When $\phi=\F_{\sigma}\Theta$, where $\Theta(z)=\frac{\sin(\pi x\omega)}{\pi x\omega}$, the Cohen's class distribution $Q_{\phi}$ is closely related to Born-Jordan quantization \cite{Cohen:1966,deGosson:2011}. By proposition \ref{prop:cohensclassasconvolutions} we may write $Q_{\phi}(\psi)=(\psi\otimes \psi)\star L_{\phi}$, where $L_{\phi}$ is the Weyl transform of $\phi$.

 For $f\in \mathcal{S}(\Rdd)$ we get the associated operators 
$
 f\star L_{\phi}.
$
In fact, $f\star L_{\phi}$ is the Born-Jordan quantization of the function $f$. To prove this, we note that in \cite{Cordero:2017} the Born-Jordan quantization of $f$ is defined to be the operator with Weyl symbol $f\ast \phi$. By proposition \ref{prop:weylsymbolofmultiwindow} the Weyl symbol of $f\star L_{\phi}$ is $f\ast \phi$, so $f\star L_{\phi}$ really is the Born-Jordan quantization of $f$. \\
Equation \eqref{eq:cohenandfilter1} is a well-known relation between Born-Jordan quantization and the Cohen's class distribution determined by $\phi$, in fact this is used to define Born-Jordan quantization in \cite{deGosson:2016}.
\end{enumerate}
\end{exmp}
\subsection{The localization problem for Cohen's class} 
The previous section showed that an operator $S$ allows the construction of operators $f\star S$ and a Cohen's class distribution $Q_{S}(\psi)=(\psi\otimes \psi)\star \check{S}$, and that the operators $f\star S$ are related to $Q_{S}(\psi)$ in a natural way. We will now consider this relationship when $f$ is the characteristic function $\chi_{\Omega}$ of some domain $\Omega \subset \Rdd$. In this case equation \eqref{eq:cohenandfilter2} from the previous section becomes
\begin{equation} \label{eq:cohenandlocalization}
    \inner{\chi_{\Omega}\star S \psi}{\psi}=\iint_{\Omega} Q_{S}(\psi)(z) \ dz.
\end{equation}
The right hand side of this equation may be interpreted as a measure of the concentration of the energy of $\psi$ in the region $\Omega$ of the time-frequency plane, and leads to a natural \textbf{localization problem for Cohen's class}\cite{Lieb:2010,Ramanathan:1993,Ramanathan:1994,Flandrin:1988} : \textit{for a Cohen's class distribution $Q$ and a measurable $\Omega \subset \Rdd$. Find the signal $\psi \in \HS$ with $\|\psi\|_{L^2}=1$ that maximizes} $$\iint_{\Omega} Q(\psi)(z) \ dz.$$
Equation \eqref{eq:cohenandlocalization} implies that the problem is solved by considering the eigenfunctions of the operator $\chi_{\Omega}\star S$ by Courant's min-max principle \cite[Thm. 28.4]{Lax:2002}, as the next proposition makes formal. 
\begin{prop} \label{prop:minmax}  
  Let $\Omega \subset \Rdd$ be a measurable subset, let $S\in \bo$ be a selfadjoint operator and let $Q_{S}$ be the associated Cohen's class distribution $Q_{S}(\psi)=(\psi \otimes \psi)\star \check{S}$. Assume that $\chi_{\Omega}\star S$ is a compact operator. Let $\lambda_1\geq \lambda_2,...$ be the positive eigenvalues of $\chi_{\Omega}\star S$ (counted with multiplicities) and let $\phi_i$ be the eigenvector corresponding to $\lambda_i$ for $i\in \N$. Then
  \begin{equation*}
  \iint_{\Omega} Q_{S}(\phi_n)(z) \ dz = \max \left\{\iint_{\Omega} Q_{S}(\psi)(z) \ dz: \|\psi\|_{L^2}=1, \psi\perp \phi_1,\phi_2,...,\phi_{n-1}\right\}.
\end{equation*}
  \end{prop}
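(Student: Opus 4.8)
The plan is to recognize the quantity $\iint_{\Omega} Q_S(\psi)(z)\,dz$ as the quadratic form of the operator $T := \chi_{\Omega}\star S$ and then invoke the Courant--Fischer min--max principle for compact self-adjoint operators.

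First I would use equation \eqref{eq:cohenandlocalization}, which is the special case $f=\chi_{\Omega}$ of Proposition \ref{prop:cohenandmultiwindow} and may therefore be assumed: for every $\psi\in\HS$,
\begin{equation*}
  \iint_{\Omega} Q_S(\psi)(z)\,dz = \inner{(\chi_{\Omega}\star S)\psi}{\psi}.
\end{equation*}
This turns the localization problem into the problem of maximizing the Rayleigh quotient $\psi\mapsto\inner{T\psi}{\psi}$ of $T=\chi_{\Omega}\star S$ over unit vectors, and more generally over unit vectors orthogonal to $\phi_1,\dots,\phi_{n-1}$.

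Next I would check that $T$ is self-adjoint so that its spectral theory is available. Since $S$ is self-adjoint, each shifted operator $\alpha_y(S)=\pi(y)S\pi(y)^{*}$ is self-adjoint, and since $\chi_{\Omega}$ is real-valued the weak integral $T=\iint_{\Rdd}\chi_{\Omega}(y)\alpha_y(S)\,dy$ is self-adjoint as well; equivalently, the identity above shows that $\inner{T\psi}{\psi}=\iint_{\Omega}\inner{\alpha_z(S)\psi}{\psi}\,dz$ is real for all $\psi$. By hypothesis $T$ is compact, so the spectral theorem supplies a complete orthonormal system of eigenvectors with real eigenvalues accumulating only at $0$; the positive ones are $\lambda_1\geq\lambda_2\geq\cdots$ with eigenvectors $\phi_1,\phi_2,\dots$, as in the statement.

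Finally I would apply the min--max principle in the form cited as \cite[Thm. 28.4]{Lax:2002}: for such an operator,
\begin{equation*}
  \lambda_n=\max\left\{\inner{T\psi}{\psi}:\|\psi\|_{L^2}=1,\ \psi\perp\phi_1,\dots,\phi_{n-1}\right\},
\end{equation*}
with the maximum attained at $\psi=\phi_n$ and $\inner{T\phi_n}{\phi_n}=\lambda_n$. Substituting the first-step identity on both sides rewrites this as the asserted equality for $\iint_{\Omega}Q_S(\psi)(z)\,dz$. The only point requiring care is the positivity of $\lambda_n$: writing $\psi\perp\phi_1,\dots,\phi_{n-1}$ in an eigenbasis of $T$, every eigenvalue other than $\lambda_1,\dots,\lambda_{n-1}$ is at most $\lambda_n$, so $\inner{T\psi}{\psi}\leq\lambda_n\|\psi\|_{L^2}^2=\lambda_n$ with equality at $\psi=\phi_n$, and positivity of $\lambda_n$ is what guarantees this bound is the genuine maximum rather than being improved on the kernel or the negative part of the spectrum. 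The main obstacle is thus not a computation but verifying the structural hypotheses (self-adjointness together with the assumed compactness) that make the min--max principle applicable; once these are in place the result is immediate.
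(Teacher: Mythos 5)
Your proposal is correct and follows essentially the same route as the paper: it reduces the localization functional to the quadratic form $\inner{(\chi_{\Omega}\star S)\psi}{\psi}$ via equation \eqref{eq:cohenandlocalization} and then applies the min--max principle for compact self-adjoint operators. The only cosmetic difference is that you justify the ``max over the orthogonal complement of $\phi_1,\dots,\phi_{n-1}$'' form directly by an eigenbasis expansion (and explicitly check self-adjointness of $\chi_{\Omega}\star S$), whereas the paper quotes the full min--max formula from \cite[Thm. 28.4]{Lax:2002} and observes that the minimum is attained at $\psi_i=\phi_i$.
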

  \begin{proof}
  By the min-max principle \cite[Thm. 28.4]{Lax:2002} we know that
  \begin{equation} \label{eq:minmaxproof1}
  \lambda_n=\min_{\psi_1,...,\psi_{n-1}} \max_{\substack{\psi\perp \psi_1,...,\psi_{n-1}\\ \|\psi\|_{L^2}=1}}  \inner{(\chi_{\Omega}\star S) \psi}{\psi}, 
\end{equation}
where $\psi_1,\psi_2,...,\psi_{n-1}$ is any set of linearly independent vectors in $\HS$.
Since $\lambda_n=\inner{(\chi_{\Omega}\star S) \phi_n}{\phi_n}$ and $\phi_n\perp \phi_1,...\phi_{n-1}$, the minimum in equation \eqref{eq:minmaxproof1} is achieved when $\psi_1=\phi_1,\psi_2=\phi_2,...,\psi_{n-1}=\phi_{n-1}$, hence
\begin{equation} \label{eq:minmaxproof2}
  \lambda_n= \max_{\substack{\psi\perp \phi_1,...,\phi_{n-1}\\ \|\psi\|_{L^2}=1}}  \inner{(\chi_{\Omega}\star S) \psi}{\psi}.
\end{equation}
By equation \eqref{eq:cohenandlocalization} we know that $\inner{(\chi_{\Omega}\star S) \psi}{\psi}=\iint_{\Omega} Q_{S}(\psi)(z) \ dz$, and since $\lambda_n=\inner{(\chi_{\Omega}\star S) \phi_n}{\phi_n}$ equation \eqref{eq:minmaxproof2} states that
\begin{equation*}
  \iint_{\Omega} Q_{S}(\phi_n)(z) \ dz = \max \left\{\iint_{\Omega} Q_{S}(\psi)(z) \ dz: \|\psi\|_{L^2}=1, \psi\perp \phi_1,\phi_2,...,\phi_{n-1}\right\}.
\end{equation*}
  \end{proof}
  \begin{rem}
  We have formulated the result by requiring that $\chi_{\Omega} \star S$ is compact. It is easy to find conditions making this true; by proposition \ref{prop:convschatten} it will be true if $\mu(\Omega)<\infty$ and $S\in \SC^p$ for some $p<\infty$. However, $\chi_{\Omega} \star S$ may well be compact in other cases too. 
\end{rem}
This idea of solving the localization problem by considering eigenfunctions of operators goes back to the work of Flandrin \cite{Flandrin:1988} for the Wigner distribution. Ramanathan and Topiwala \cite{Ramanathan:1994} later showed that similar techniques were possible for other Cohen's class distributions, by defining the operators $\chi_{\Omega}\star S$ in equation \eqref{eq:cohenandlocalization} using the Weyl calculus. Recently Boggiatto et. al have considered the same problem in \cite{Boggiatto:2017} using methods very similar to those we consider, but without the convolutions of operators and functions.
\begin{exmp}
	\begin{enumerate}
		\item If $S=\varphi \otimes \varphi$ for $\varphi\in \HS$, then we know from examples \ref{exmp:cohen} and \ref{exmp:cohenoperators} that $Q_{S}(\psi)=|V_{\varphi}\psi|^2$, the spectrogram, and $\chi_{\Omega}\star S$ is the localization operator $\mathcal{A}_{\Omega}^{\varphi}$. Proposition \ref{prop:minmax} says that the functions $\psi$ that minimize
		\begin{equation*}
  \iint_{\Omega} |V_{\varphi}\psi|^2(z) \ dz
\end{equation*}
are the eigenfunctions of the operator $\mathcal{A}_{\Omega}^{\varphi}$. This relation is well known \cite{Ramanathan:1994spec}, and exploited in the recent work of Abreu et al. on accumulated spectrograms \cite{Abreu:2012,Abreu:2016}.
\item When $S=2^dP$, we have seen in examples \ref{exmp:cohen} and \ref{exmp:cohenoperators} that $\chi_{\Omega}\star 2^dP$ is the Weyl transform $L_{\chi_{\Omega}}$ and that $Q_{S}(\psi)=W(\psi)$ -- the Wigner distribution of $\psi$. If we wish to find the functions $\psi\in \HS$ whose Wigner distributions is maximally concentrated in a domain $\Omega \subset \Rdd$, proposition \ref{prop:minmax} \footnote{The proposition requires that $\chi_{\Omega}\star 2^dP=L_{\chi_{\Omega}}$ is compact. Even though $P$ is not a compact operator, the operator $L_{\chi_{\Omega}}$ is compact whenever $\mu(\Omega)<\infty$, since $\chi_{\Omega}\in L^2(\Rdd)$ in this case and $L_f\in \SC^2$ whenever $f\in L^2(\Rdd)$ by Pool's theorem \cite{Pool:1966}.} reduces the problem to finding the eigenfunctions of the Weyl transform $L_{\chi_{\Omega}}$. This insight was first formulated in Flandrin's paper \cite{Flandrin:1988}, and extensions of his results include \cite{Ramanathan:1993} and \cite{Lieb:2010}.
	\end{enumerate}
\end{exmp}

Although proposition \ref{prop:minmax} only assumes that $\chi_{\Omega}\star S$ is compact and selfadjoint, the interpretation of 
\begin{equation*}
  \iint_{\Omega} Q_{\check{S}}(\psi)(z) \ dz
\end{equation*}
as the energy concentration of $\psi$ in $\Omega$ is more natural when $Q_{\check{S}}$ is positive and normalized in the sense that $$ \iint_{\Rdd} Q(\psi)(z) \ dz=\|\psi\|_{L^2}^2.$$ As we observed in section \ref{sec:nicecohen}, this is satisfied exactly when $S\in \tco$ is a positive operator with $\tr(S)=1$. In this case the operators $\chi_{\Omega}\star S$ are the mixed-state localization operators introduced in section \ref{sec:generalizedlocalization} using different arguments, and the next section considers this special case in detail.

\section{Localization operators and positive operator valued measures} \label{sec:genlocandpovm}
In this section we will approach the mixed-state localization operators $\chi_{\Omega}\star S$ from another perspective, namely that of covariant positive operator valued measures (POVMs). This perspective has been ever-present when the convolutions of operators have been introduced and discussed in quantum physics \cite{Holevo:1979, Werner:1984, Kiukas:2012gt,Kiukas:2006}, and we wish to show that it may be of interest also in time-frequency analysis. A POVM $F$ gives two possible measures of the time-frequency content of a signal $\psi$ in a domain $\Omega$ in the the time-frequency plane. On the one hand, the \textit{signal} $F(\Omega)\psi$ may be interpreted as the component of $\psi$ with time-frequency components in $\Omega$. On the other hand, we know from section \ref{sec:povm} that $\psi$ induces a probability measure $\mu_{\psi}^F$, and the \textit{number} $\mu_{\psi}^F(\Omega)$ measures the time-frequency content of $\psi$ in $\Omega$. \\
Given a signal $\psi\in \HS$, we wish to show that these two ways of measuring the time-frequency content of $\psi$ in a domain $\Omega$ lead to the study of mixed-state localization operators and Cohen's class distributions, respectively. The first step in this direction is to note that mixed-state localization operators define POVMs.

\begin{prop} \label{prop:genlocPOVM}
	Let $S\in \tco$ be a positive operator with $\tr(S)=1$. Then $S$ defines a covariant POVM $F$ by
	\begin{equation*}
  F(\Omega)=\chi_{\Omega}\star S
\end{equation*}
 for any measurable $\Omega \subset \Rdd$.
\end{prop}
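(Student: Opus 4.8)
The plan is to verify directly that $F(\Omega)=\chi_{\Omega}\star S$ satisfies the three defining conditions of Definition \ref{def:povm} together with covariance. First I would record that $F(\Omega)$ is a well-defined bounded operator for every measurable $\Omega$: when $\mu(\Omega)<\infty$ we have $\chi_{\Omega}\in\Ldd$ and $\chi_{\Omega}\star S\in\tco$, while for general $\Omega$ we read $\chi_{\Omega}\in L^{\infty}(\Rdd)$ and interpret $\chi_{\Omega}\star S\in\bo$ through the duality \eqref{eq:convolutionsofbounded}. The identity condition is then immediate from Proposition \ref{prop:positiveandidentity}(2), since $\chi_{\Rdd}$ is the constant function $1$ and hence $F(\Rdd)=1\star S=I$. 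For positivity I would invoke equation \eqref{eq:cohenandlocalization}, which gives
\begin{equation*}
  \inner{F(\Omega)\psi}{\psi}=\iint_{\Omega} Q_{S}(\psi)(z)\,dz,\qquad Q_{S}(\psi)=(\psi\otimes\psi)\star\check{S}.
\end{equation*}
Because $P$ is self-adjoint, $S\geq 0$ forces $\check{S}=PSP\geq 0$, so $Q_{S}(\psi)$ is a positive function by Proposition \ref{prop:positiveandidentity}(1); the integrand is therefore nonnegative and $F(\Omega)$ is a positive operator.

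The key step is $\sigma$-additivity in the weak operator topology. Here I would fix $\psi\in\HS$ and observe that $Q_{S}(\psi)=(\psi\otimes\psi)\star\check{S}$ lies in $\Ldd$ with
\begin{equation*}
  \iint_{\Rdd} Q_{S}(\psi)(z)\,dz=\|\psi\|_{L^2}^2\,\tr(\check{S})=\|\psi\|_{L^2}^2,
\end{equation*}
by Proposition \ref{prop:convschatten} and Lemma \ref{lem:werner}. Thus the map $\Omega\mapsto\inner{F(\Omega)\psi}{\psi}=\iint_{\Omega}Q_{S}(\psi)\,dz$ is precisely the finite measure with density $Q_{S}(\psi)$ with respect to Lebesgue measure, and ordinary countable additivity of this scalar measure yields $\sum_{i}\inner{F(M_i)\psi}{\psi}=\inner{F(\cup_i M_i)\psi}{\psi}$ for any countable disjoint family $\{M_i\}$. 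To upgrade this diagonal statement to convergence of $\sum_{i}\inner{F(M_i)\psi}{\phi}$ for all $\psi,\phi$ -- which is exactly what weak operator convergence of $\sum_{i}F(M_i)$ means -- I would apply the polarization identity to each term $\inner{F(M_i)\psi}{\phi}$ and sum the four resulting diagonal series.

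Finally, for covariance I must establish $\alpha_z(F(\Omega))=F(\Omega+z)$, which amounts to the translation-equivariance $\alpha_z(f\star S)=(T_zf)\star S$ applied to $f=\chi_{\Omega}$, together with the elementary identity $T_z\chi_{\Omega}=\chi_{\Omega+z}$. For $f\in\Ldd$ this is a one-line change of variables in $\iint f(y)\alpha_y(S)\,dy$ using $\alpha_z\alpha_y=\alpha_{z+y}$. I expect the main obstacle to be the case $\mu(\Omega)=\infty$, where $\chi_{\Omega}\star S$ is defined only by duality. There I would pass to the relation \eqref{eq:convolutionsofbounded}, compute $\tr(\alpha_z(\chi_{\Omega}\star S)T)=\tr((\chi_{\Omega}\star S)\alpha_{-z}(T))$ by cyclicity of the trace, and reduce matters to the function-side identity $\check{S}\star\alpha_{-z}(T)=T_{-z}(\check{S}\star T)$, which itself follows from $\check{\alpha_{-z}(T)}=\alpha_{z}(\check{T})$ (a consequence of $P\pi(z)P=\pi(-z)$). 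A change of variables then identifies $\inner{\alpha_z(\chi_{\Omega}\star S)}{T}$ with $\inner{\chi_{\Omega+z}\star S}{T}$ for every $T\in\tco$, completing the proof.
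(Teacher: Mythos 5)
Your proof is correct and follows essentially the same route as the paper: both verify the POVM axioms directly, obtaining $F(\R^{2d})=I$ and positivity from Proposition \ref{prop:positiveandidentity}, $\sigma$-additivity from the duality formula $\inner{(\chi_{\Omega}\star S)\phi}{\psi}=\iint_{\Rdd}\chi_{\Omega}(z)\,\bigl(\check{S}\star(\phi\otimes\psi)\bigr)(z)\,dz$ combined with countable additivity of the integral of an $L^1$ density, and covariance from $\alpha_z(f\star S)=(T_zf)\star S$. The only cosmetic differences are that you recover the off-diagonal matrix elements by polarizing the diagonal ones (the paper simply inserts the rank-one operator $T=\phi\otimes\psi$ into the duality relation) and that you sketch a proof of the translation-equivariance identity, including the $L^{\infty}$ case, where the paper cites \cite{Skrettingland:2017}.
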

\begin{proof}
	We get from proposition \ref{prop:positiveandidentity} that $F(\Omega)\geq 0$ and $F(\Rdd)=I$. The covariance of $F$ follows from the relation $\alpha_z(f\star S)=(T_zf)\star S$ for $f\in L^{\infty}(\Rdd)$ \cite{Skrettingland:2017}, since $\alpha_z(F(\Omega))=\alpha_z(\chi_{\Omega}\star S)=(T_z\chi_{\Omega})\star S=\chi_{\Omega+z}\star S=F(\Omega+z)$. \\
	If $\{\Omega_i\}_{i\in \N}$ is a collection of disjoint, measurable subsets of $\Rdd$ and $\Omega:=\cup_{i\in \N}\Omega_i$, then $\chi_{\Omega}=\sum_{i=1}^{\infty}\chi_{\Omega_i}$, where the sum converges pointwise. We need to show that $\chi_{\Omega}\star S=\sum_{i=1}^{\infty}\chi_{\Omega_i}\star S$ with convergence in the weak operator topology, i.e. $\sum_{i=1}^{\infty}\inner{\chi_{\Omega_i}\star S \phi}{\psi}=\inner{\chi_{\Omega}\star S \phi}{\psi}$ for all $\phi,\psi \in \HS$. Since $\chi_{\Omega}\in L^{\infty}(\Rdd)$, we know from  equation \eqref{eq:convolutionsofboundedexplicit} in section \ref{sec:werner} that the operator $\chi_{\Omega}\star S\in \bo$ is defined by the duality relation 
\begin{equation*}
  \tr(T(\chi_{\Omega}\star S))=\iint_{\Rdd} \chi_{\Omega}(z) \check{S}\star T(z) \ dz
\end{equation*}
for any $T\in \tco$. In particular, with $T=\phi \otimes \psi$ with $\phi,\psi \in \HS$ we get that
\begin{equation} \label{eq:fubiniandpovm}
  \inner{\chi_{\Omega}\star S\phi}{\psi}=\iint_{\Rdd} \chi_{\Omega}(z) (\check{S}\star (\phi\otimes \psi))(z) \ dz.
\end{equation} 
This implies that
\begin{equation*}
  \sum_{i=1}^{\infty} \inner{\chi_{\Omega_i}\star S\phi}{\psi}=  \sum_{i=1}^{\infty}\iint_{\Rdd} \chi_{\Omega_i}(z) (\check{S}\star (\phi\otimes \psi))(z) \ dz.
\end{equation*}
Since $\sum_{i=1}^{\infty}\chi_{\Omega_i}=\chi_{\Omega}$ and $\check{S}\star (\phi\otimes \psi)\in L^1(\Rdd)$ by proposition \ref{prop:convschatten}, we may use Fubini's theorem to change the order of integration, and we obtain that
\begin{align*}
  \sum_{i=1}^{\infty} \inner{\chi_{\Omega_i}\star S\phi}{\psi}&=\iint_{\Rdd}\sum_{i=1}^{\infty} \chi_{\Omega_i}(z) (\check{S}\star (\phi\otimes \psi))(z) \ dz \\
  &= \iint_{\Rdd} \chi_{\Omega}(z) (\check{S}\star (\phi\otimes \psi))(z) \ dz \\
  &= \inner{\chi_{\Omega}\star S\phi}{\psi},
\end{align*}
where the final line follows from equation \eqref{eq:fubiniandpovm}.
\end{proof}
In particular, this result implies that the localization operators $\mathcal{A}_{\Omega}^{\varphi}$ may be interpreted as POVMs.
\begin{cor}
	Let $\varphi\in \HS$ be a window with $\|\varphi\|_2=1$. Then $\varphi$ defines a POVM $F$ by
	\begin{equation*}
  F(\Omega)=\mathcal{A}_{\Omega}^{\varphi}.
\end{equation*}
\end{cor}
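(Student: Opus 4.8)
The plan is to deduce this corollary directly from Proposition \ref{prop:genlocPOVM} by exhibiting the window $\varphi$ as a particular instance of the positive trace-class operator $S$ appearing there. Concretely, I would set $S=\varphi\otimes\varphi$ and check that this rank-one operator meets the two hypotheses of that proposition: it must be positive and have unit trace. Positivity is immediate from the definition $\xi\otimes\eta(\zeta)=\inner{\zeta}{\eta}\xi$, since for any $\zeta\in\HS$ one computes $\inner{(\varphi\otimes\varphi)\zeta}{\zeta}=|\inner{\zeta}{\varphi}|^2\geq 0$. The unit-trace condition follows from $\tr(\varphi\otimes\varphi)=\inner{\varphi}{\varphi}=\|\varphi\|_{L^2}^2=1$, which holds precisely because $\varphi$ is normalized.

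The second ingredient is the identification of the localization operator with the relevant convolution. By Lemma \ref{lem:locasconv}, applied with $\varphi_1=\varphi_2=\varphi$ and mask $f=\chi_\Omega$, we have $\mathcal{A}_\Omega^\varphi=\chi_\Omega\star(\varphi\otimes\varphi)$ for every measurable $\Omega\subset\Rdd$. Thus the assignment $\Omega\mapsto\mathcal{A}_\Omega^\varphi$ is exactly the map $\Omega\mapsto\chi_\Omega\star S$ with $S=\varphi\otimes\varphi$, and in particular $F(\Rdd)=\mathcal{A}_{\Rdd}^\varphi=1\star(\varphi\otimes\varphi)=I$ by Proposition \ref{prop:positiveandidentity}.

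With these two observations in place, Proposition \ref{prop:genlocPOVM} applies verbatim and yields that $F(\Omega)=\mathcal{A}_\Omega^\varphi$ is a covariant POVM, which completes the argument. I do not anticipate any genuine obstacle here: the content of the statement is entirely carried by the preceding proposition, and the only thing to verify is that pure states $\varphi\otimes\varphi$ lie inside the class of density operators treated there. The sole point requiring a moment's care is the bookkeeping of which argument of the inner product is antilinear when checking positivity, but this does not affect the conclusion.
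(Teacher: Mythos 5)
Your argument is correct and follows exactly the paper's route: the paper's proof likewise reads off the corollary from Proposition \ref{prop:genlocPOVM} applied to $S=\varphi\otimes\varphi$ together with Lemma \ref{lem:locasconv}. Your explicit verification that $\varphi\otimes\varphi$ is positive with unit trace merely spells out what the paper leaves implicit.
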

\begin{proof}
	Follows from lemma \ref{lem:locasconv} and the previous proposition with $S=\varphi \otimes \varphi$.
\end{proof}
\begin{rem}
	The fact that a localization operator determines a POVM has been remarked by other authors, such as \cite{Alanga:2014}.
\end{rem} 

\subsection{Cohen's class and POVMs}
By proposition \ref{prop:genlocPOVM}, a positive operator $S\in \tco$ with $\tr(S)=1$ defines a POVM $F$ via the mixed-state localization operators $F(\Omega)=\chi_{\Omega}\star S$. Once we have a POVM $F$, we know from section \ref{sec:povm} that we obtain a probability measure $\mu_{\psi}^F$ for each $\psi \in \HS$. As we mentioned at the start of the section, the time-frequency content of $\psi$ in $\Omega$ may be measured either by $F(\Omega)\psi$, or by the induced probability measure $\mu_{\psi}^F$. When the POVM $F$ is of the form in proposition \ref{prop:genlocPOVM}, the measures $\mu_{\psi}^{F}$ are given by the positive Cohen's class distribution induced by $S$ as in proposition \ref{prop:cohenandmultiwindow}.
\begin{lem} \label{lem:radonnikodym}
	Let $S\in \tco$ be a positive operator with $\tr(S)=1$, and consider the POVM $F(\Omega)=\chi_{\Omega}\star S$. For $\psi \in \HS$, the induced probability measure $\mu_{\psi}^F$ on $\R^{2d}$ is given by
	\begin{equation*}
  \mu_{\psi}^F(\Omega)=\iint_{\Omega}\left((\psi\otimes \psi) \star \check{S}\right)(z)\ dz.
\end{equation*}
In other words, the Radon-Nikodym derivative of $\mu_{\psi}^F$ w.r.t. Lebesgue measure $dz$ is the Cohen class distribution
\begin{equation*}
  Q_S(\psi)=(\psi\otimes \psi)\star \check{S}.
\end{equation*}

\end{lem}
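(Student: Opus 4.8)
The plan is to reduce the claim directly to Proposition \ref{prop:cohenandmultiwindow}, which already carries out the essential computation. First I would recall from section \ref{sec:povm} that the probability measure attached to the POVM $F$ is defined, for $\psi\in\HS$ with $\|\psi\|_{L^2}=1$, by $\mu_\psi^F(\Omega)=\inner{F(\Omega)\psi}{\psi}$. Since here $F(\Omega)=\chi_{\Omega}\star S$, this reads $\mu_\psi^F(\Omega)=\inner{(\chi_{\Omega}\star S)\psi}{\psi}$, so the entire task is to rewrite the right-hand side as an integral of $Q_S(\psi)$ over $\Omega$.

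The key step is to invoke equation \eqref{eq:cohenandfilter2} of Proposition \ref{prop:cohenandmultiwindow} with the bounded function $f=\chi_{\Omega}\in L^{\infty}(\Rdd)$. This yields
\[
\inner{(\chi_{\Omega}\star S)\psi}{\psi}=\iint_{\Rdd}\chi_{\Omega}(z)\,Q_S(\psi)(z)\,dz=\iint_{\Omega}\bigl((\psi\otimes\psi)\star\check{S}\bigr)(z)\,dz,
\]
which is precisely equation \eqref{eq:cohenandlocalization}. Combining this with the definition of $\mu_\psi^F$ gives the displayed formula for $\mu_\psi^F(\Omega)$, valid for every measurable $\Omega\subset\Rdd$.

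To upgrade this equality of set functions into the asserted Radon-Nikodym statement, I would verify that $Q_S(\psi)=(\psi\otimes\psi)\star\check{S}$ is genuinely a probability density. Positivity of $S$ is inherited by $\check{S}=PSP$, and $\psi\otimes\psi$ is a positive trace class operator, so by Proposition \ref{prop:positiveandidentity} the convolution $(\psi\otimes\psi)\star\check{S}$ is a nonnegative function; by Lemma \ref{lem:werner} it lies in $L^1(\Rdd)$ with $\iint_{\Rdd}Q_S(\psi)\,dz=\tr(\psi\otimes\psi)\,\tr(\check{S})=\|\psi\|_{L^2}^2\,\tr(S)=1$. Since $\mu_\psi^F(\Omega)=\iint_{\Omega}Q_S(\psi)\,dz$ for all measurable $\Omega$, uniqueness of Radon-Nikodym derivatives identifies $Q_S(\psi)$ with $d\mu_\psi^F/dz$.

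The argument is essentially bookkeeping once Proposition \ref{prop:cohenandmultiwindow} is in hand, so I do not anticipate a serious obstacle. The only point deserving attention is to confirm that the hypotheses of that proposition are met, namely that $\chi_{\Omega}\in L^{\infty}(\Rdd)$ regardless of whether $\mu(\Omega)$ is finite, so that $\chi_{\Omega}\star S$ is the bounded operator defined by the duality \eqref{eq:convolutionsofbounded}, and to check the nonnegativity and unit total mass of $Q_S(\psi)$, without which the identity would merely be an equality of measures rather than a statement about densities.
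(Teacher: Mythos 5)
Your proposal is correct and follows essentially the same route as the paper, which simply observes that the lemma is a restatement of Proposition \ref{prop:cohenandmultiwindow} applied to $f=\chi_{\Omega}$ combined with the definition $\mu_{\psi}^F(\Omega)=\inner{F(\Omega)\psi}{\psi}$. Your additional verification that $Q_S(\psi)$ is nonnegative with unit total mass is handled by the paper in the remark immediately following the lemma (via Theorem \ref{thm:nicecohen}), so including it is sound but not a departure in method.
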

\begin{proof}
	This is merely a restatement of proposition \ref{prop:cohenandmultiwindow} in the terminology of POVMs, since $\mu_{\psi}^F$ is defined by $\mu_{\psi}^F(\Omega)=\inner{F(\Omega)\psi}{\psi}=\inner{(\chi_{\Omega}\star S)\psi}{\psi}$.
\end{proof}
Since $S$ is assumed to be a positive operator with $\tr(S)=1$, we know that the Cohen's class distribution $Q_S$ in lemma \ref{lem:radonnikodym} is positive and has the correct total energy property by theorem \ref{thm:nicecohen}. This is exactly what we need to get that $\mu_{\psi}^{F}$ is a probability measure. 
\begin{exmp}
	Let $S=\varphi \otimes \varphi$ for $\varphi\in \HS$ with $\|\varphi\|_{L^2}=1$. As we have seen in example \ref{exmp:cohenoperators}, the POVM $F(\Omega)=\chi_{\Omega}\star S$ is given by the localization operators $F(\Omega)=\mathcal{A}_{\Omega}^\varphi$, and the associated Cohen's class distribution is the spectrogram: $Q_{S}(\psi)=|V_\varphi \psi|^2$. By lemma \ref{lem:radonnikodym} the induced probability measures $\mu_\psi^F$ are given by
	\begin{equation*}
  \mu_{\psi}^F(\Omega)=\iint_{\Omega} |V_{\varphi}\psi(z)|^2 \ dz,
\end{equation*}
hence the Radon-Nikodym derivatives of the probability measures induced by localization operators are spectrograms.
\end{exmp}
Another way of stating the relation between the mixed-state localization operators and the POVM $F$ that they induce, is to express the localization operators as an integral over the POVM.
\begin{prop} \label{prop:convolutionasintegralofpovm}
	Let $F$ be a POVM given by $F(\Omega)=\chi_{\Omega}\star S$ for some positive $S\in \tco$ with $\tr(S)=1$. Then
\begin{equation*}
  f\star S=\iint_{\Rdd} f dF.
\end{equation*}
In particular, the mixed-state localization operators $\chi_{\Omega}\star S$ may be expressed as
\begin{equation*}
  \chi_{\Omega}\star S=\iint_{\Omega} dF.
\end{equation*}

\end{prop}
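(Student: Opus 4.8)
The plan is to invoke the uniqueness clause of Lemma \ref{lem:povmintegration}. By definition, the operator $\iint_{\Rdd} f \, dF$ is the unique $A_f\in \bo$ satisfying $\inner{A_f\psi}{\psi}=\iint_{\Rdd} f(z)\, d\mu_\psi^F$ for every $\psi \in \HS$. Hence, taking $f\in L^{\infty}(\Rdd)$ so that both sides below are defined, it suffices to verify that the operator $f\star S$ enjoys exactly this defining property, i.e. that
\begin{equation*}
  \inner{(f\star S)\psi}{\psi}=\iint_{\Rdd} f(z)\, d\mu_\psi^F \qquad \text{for all } \psi\in \HS.
\end{equation*}

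The key input is Lemma \ref{lem:radonnikodym}, which identifies the Radon--Nikodym derivative of $\mu_\psi^F$ with respect to Lebesgue measure as the Cohen's class distribution $Q_S(\psi)=(\psi\otimes\psi)\star\check{S}$. Thus $d\mu_\psi^F=Q_S(\psi)(z)\,dz$, and the right-hand side above becomes $\iint_{\Rdd} f(z)\,Q_S(\psi)(z)\,dz$. To match the left-hand side I would then apply equation \eqref{eq:cohenandfilter2} of Proposition \ref{prop:cohenandmultiwindow}, which states precisely that $\inner{(f\star S)\psi}{\psi}=\iint_{\Rdd} f(z)\,Q_S(\psi)(z)\,dz$. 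Combining these two facts yields the required identity for every $\psi$, and the uniqueness in Lemma \ref{lem:povmintegration} gives $f\star S=\iint_{\Rdd} f\, dF$.

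The ``in particular'' statement follows by specializing to $f=\chi_{\Omega}$: since $\iint_{\Rdd}\chi_{\Omega}\, dF=\iint_{\Omega} dF=F(\Omega)=\chi_{\Omega}\star S$ by the remark following Lemma \ref{lem:povmintegration}, both expressions coincide.

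I expect essentially no obstacle here, as the result is a repackaging of Lemma \ref{lem:radonnikodym} and Proposition \ref{prop:cohenandmultiwindow} through the definition of POVM integration. The only point requiring minor care is that POVM integration is defined only through the diagonal quadratic form $\psi\mapsto\inner{A_f\psi}{\psi}$; but since a bounded operator on a complex Hilbert space is determined by its quadratic form, matching $\inner{(f\star S)\psi}{\psi}$ for all $\psi$ is enough to identify $f\star S$ with $\iint_{\Rdd} f\, dF$.
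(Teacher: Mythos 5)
Your proposal is correct and follows essentially the same route as the paper: both arguments reduce the claim, via the uniqueness clause of lemma \ref{lem:povmintegration}, to verifying $\inner{(f\star S)\psi}{\psi}=\iint_{\Rdd} f\, d\mu_{\psi}^F$ for all $\psi$, and both identify $d\mu_{\psi}^F$ with $Q_S(\psi)(z)\,dz$ through lemma \ref{lem:radonnikodym}. The only cosmetic difference is that where you cite equation \eqref{eq:cohenandfilter2} of proposition \ref{prop:cohenandmultiwindow} for the identity $\inner{(f\star S)\psi}{\psi}=\iint_{\Rdd} f(z)\,Q_S(\psi)(z)\,dz$, the paper rederives it on the spot from the weak definition of $f\star S$ with $T=\psi\otimes\psi$.
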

\begin{proof}
	The operator $\iint_{\Rdd} f dF$ is by definition the unique operator satisfying
	\begin{equation*}
  \inner{\iint_{\Rdd} f dF \psi}{\psi}=\iint_{\Rdd} f d\mu_{\psi}^F
\end{equation*}
for each $\psi \in \HS$. We need to show that $f\star S$ satisfies this condition.
\begin{align*}
  \inner{f\star S\psi}{\psi}&= \inner{\iint_{\Rdd}f(z) \alpha_z(S) \ dz \ \psi}{\psi} \\
  &=\iint_{\Rdd}f(z) \inner{\alpha_z(S)\psi}{\psi} \ dz \\
  &= \iint_{\Rdd} f(z) \left((\psi\otimes \psi) \star \check{S}\right)(z) \ dz \\
  &= \iint_{\Rdd} f d\mu_{\psi}^F.
\end{align*}
In the calculation we have moved the inner product inside the integral. This is an instance of the definition of $f\star S$ in equation \eqref{eq:convolutionsofboundedexplicit}, when $T=\psi\otimes \psi$. We have also used the equality $\inner{\alpha_z(S)\psi}{\psi}=\left((\psi\otimes \psi) \star \check{S}\right)(z)$, which follows from the definition of the convolution of two operators. In the last line we used lemma \ref{lem:radonnikodym}.
\end{proof}

\begin{cor}
	Let $F$ be a POVM given by $F(\Omega)=\mathcal{A}_{\Omega}^{\varphi}$ for some window $\varphi\in \HS$ with $\|\varphi\|_2=1$. Then 
	\begin{equation*}
  \mathcal{A}_f^{\varphi}=\iint_{\Rdd} f dF.
\end{equation*}
In particular, the localization operators $\mathcal{A}_{\Omega}^{\varphi}$ may be expressed as
\begin{equation*}
  \mathcal{A}_{\Omega}^{\varphi} =\iint_{\Omega} dF.
\end{equation*}
\end{cor}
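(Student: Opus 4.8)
The plan is to obtain this corollary as the rank-one specialization of Proposition \ref{prop:convolutionasintegralofpovm}. First I would set $S = \varphi \otimes \varphi$ and observe that, since $\|\varphi\|_{L^2} = 1$, the operator $S$ is positive and satisfies $\tr(S) = \inner{\varphi}{\varphi} = 1$. Thus $S$ meets the hypotheses of Proposition \ref{prop:convolutionasintegralofpovm}, namely a positive trace-class operator with unit trace.

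Next I would invoke Lemma \ref{lem:locasconv}, which states that $\mathcal{A}_f^{\varphi} = f \star (\varphi \otimes \varphi) = f \star S$ for $\|\varphi\|_{L^2} = 1$. Taking $f = \chi_\Omega$ in particular gives $\mathcal{A}_\Omega^\varphi = \chi_\Omega \star S = F(\Omega)$, so the POVM $F$ named in the corollary is exactly the POVM $F(\Omega) = \chi_\Omega \star S$ to which Proposition \ref{prop:convolutionasintegralofpovm} applies. This identification is the only step requiring verification, and it reduces entirely to the positivity/trace computation for $\varphi \otimes \varphi$ together with Lemma \ref{lem:locasconv}.

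Applying Proposition \ref{prop:convolutionasintegralofpovm} then yields $f \star S = \iint_{\Rdd} f \, dF$, and combining this with $\mathcal{A}_f^\varphi = f \star S$ gives the first identity $\mathcal{A}_f^{\varphi} = \iint_{\Rdd} f \, dF$. The final \emph{in particular} statement follows by specializing to $f = \chi_\Omega$, using the convention $\iint_{\Omega} dF := \iint_{\Rdd} \chi_\Omega \, dF$ from Lemma \ref{lem:povmintegration} together with $\mathcal{A}_\Omega^\varphi = \chi_\Omega \star S$. Since the corollary is a direct specialization of the preceding proposition, I do not expect any genuine obstacle; the proof is essentially the remark that a rank-one density operator $\varphi \otimes \varphi$ is a legitimate choice of $S$.
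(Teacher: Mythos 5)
Your proof is correct and follows exactly the route the paper intends: specialize Proposition \ref{prop:convolutionasintegralofpovm} to $S=\varphi\otimes\varphi$, check positivity and $\tr(S)=\inner{\varphi}{\varphi}=1$, and identify $\mathcal{A}_f^{\varphi}=f\star S$ via Lemma \ref{lem:locasconv}. The paper leaves this corollary without a written proof precisely because it is this immediate specialization, so nothing further is needed.
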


A much deeper result than proposition \ref{prop:genlocPOVM} is that the converse is also true: any covariant POVM $F$ is of the form in proposition \ref{prop:genlocPOVM} \cite{Holevo:1979,Werner:1984,Kiukas:2006}. We provide the proof in  our terminology for completeness.
\begin{prop} \label{prop:covariantpovm} 
	Let $F$ be a covariant POVM. There exists some positive $S\in \tco$ with $\tr(S)=1$ such that 
	\begin{equation*}
  F(\Omega)=\chi_{\Omega} \star S
\end{equation*}
for all $\Omega \subset \Rdd$.
\end{prop}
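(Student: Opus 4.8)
The plan is to reduce the statement to Theorem \ref{thm:poscorrule} by turning the POVM $F$ into a positive correspondence rule. Using the integration against $F$ supplied by Lemma \ref{lem:povmintegration}, I would define a map $\Gamma\colon L^{\infty}(\Rdd)\to\bo$ by $\Gamma(f)=\iint_{\Rdd} f\,dF$, so that in particular $\Gamma(\chi_{\Omega})=F(\Omega)$ for every measurable $\Omega$. It then suffices to verify that $\Gamma$ satisfies the four hypotheses of Theorem \ref{thm:poscorrule}: that theorem will then produce a positive $S\in\tco$ with $\tr(S)=1$ and $\Gamma(f)=f\star S$, and evaluating at $f=\chi_{\Omega}$ yields $F(\Omega)=\chi_{\Omega}\star S$.

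First I would dispose of the three easy axioms. Property (1) is immediate from $F(\Rdd)=I$. Property (3) holds because $\langle\Gamma(f)\psi,\psi\rangle=\iint_{\Rdd} f\,d\mu_{\psi}^F\ge 0$ whenever $f\ge 0$, since each $\mu_{\psi}^F$ is a positive measure. For property (2), covariance of $F$ gives $\Gamma(T_z\chi_{\Omega})=\Gamma(\chi_{\Omega+z})=F(\Omega+z)=\alpha_z(F(\Omega))=\alpha_z(\Gamma(\chi_{\Omega}))$, using $T_z\chi_{\Omega}=\chi_{\Omega+z}$; this extends from characteristic functions to simple functions by linearity and then to all of $L^{\infty}(\Rdd)$ by the norm-boundedness of $\Gamma$ (note $\|\Gamma(f)\|\le\|f\|_{\infty}$) together with uniform density of simple functions.

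The main obstacle is property (4), the weak*-to-weak* continuity of $\Gamma$, which amounts to exhibiting a predual operator $\Gamma_{\ast}\colon\tco\to\Ldd$. The key technical input is that $F(\Omega)$ is trace class for bounded $\Omega$, with $\tr(F(\Omega))=\mu(\Omega)$. I would prove this exactly as in the proof of Theorem \ref{thm:poscorrule}: for $T=\psi\otimes\psi$, covariance and Tonelli's theorem give
\begin{equation*}
  \iint_{\Rdd}\tr\big(T\,\alpha_z(F(\Omega))\big)\,dz=\iint_{\Rdd}\mu_{\psi}^F(\Omega+z)\,dz=\mu(\Omega)\,\tr(T),
\end{equation*}
and by decomposition the identity persists for all positive $T\in\tco$. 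Comparing this with Lemma \ref{lem:werner} forces $F(\Omega)\in\tco$ with $\tr(F(\Omega))=\mu(\Omega)$. Consequently $\Omega\mapsto F(\Omega)$ is a $\tco$-valued measure on bounded sets that is absolutely continuous with respect to Lebesgue measure; since $\tco$ has the Radon--Nikodym property, there is a density $\bar S(z)$ with $F(\Omega)=\iint_{\Omega}\bar S(z)\,dz$, whence $\mu_{\psi}^F$ acquires the $\Ldd$-density $z\mapsto\langle\bar S(z)\psi,\psi\rangle$. Polarizing in $\psi$ produces the predual map and hence property (4).

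With all four properties in hand, Theorem \ref{thm:poscorrule} applies and delivers a positive $S\in\tco$ with $\tr(S)=1$ such that $\Gamma(f)=f\star S$ for all $f\in L^{\infty}(\Rdd)$; taking $f=\chi_{\Omega}$ gives $F(\Omega)=\Gamma(\chi_{\Omega})=\chi_{\Omega}\star S$, as desired. Internally, the final covariance step of that theorem is precisely what upgrades the Radon--Nikodym density $\bar S(z)$ to the translate $\alpha_z(S)$ of a single operator, matching $\chi_{\Omega}\star S=\iint_{\Omega}\alpha_z(S)\,dz$. I expect the genuine work to lie entirely in establishing property (4), i.e.\ the absolute continuity encoded in the trace-class identity above; the remaining verifications are formal consequences of the POVM axioms and covariance.
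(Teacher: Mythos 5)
Your proposal is correct and follows essentially the same route as the paper: both define $\Gamma(f)=\iint_{\Rdd} f\,dF$ via Lemma \ref{lem:povmintegration}, verify the four axioms of Theorem \ref{thm:poscorrule}, and read off $F(\Omega)=\chi_{\Omega}\star S$ at $f=\chi_{\Omega}$. The only divergence is in axiom (4), where you re-derive the trace-class identity $\tr(F(\Omega))=\mu(\Omega)$ and a $\tco$-valued Radon--Nikodym density (importing the argument from inside the proof of Theorem \ref{thm:poscorrule}, and correctly avoiding circularity by using Tonelli on the positive measures $\mu_{\psi}^F$), whereas the paper works with the scalar measures $\mu_S^F(\Omega)=\tr(SF(\Omega))$ and cites their absolute continuity; both verifications are sound.
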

\begin{proof}
	We will show that the map $\Gamma:L^{\infty}(\Rdd)\to \bo$ defined by
	\begin{equation*}
  \Gamma(f)=\iint_{\Rdd}f dF
\end{equation*}
satisfies the conditions of theorem \ref{thm:poscorrule}. By that theorem we could then conclude that there is some positive $S\in \tco$ with $\tr(S)=1$ such that
\begin{equation*}
  \iint_{\Rdd} f dF = f \star S
\end{equation*}
 for any $f\in L^{\infty}(\Rdd)$, and in particular $F(\Omega)=\iint_{\Rdd} \chi_{\Omega} dF = \chi_{\Omega} \star S$. We check that the conditions in theorem \ref{thm:poscorrule} are satisfied.
 \begin{enumerate}
 	\item $\Gamma(\chi_{\Rdd})=\iint_{\Rdd} \chi_{\Rdd} dF=F(\Rdd)=I$, by the definition of a POVM. 
 	\item Fix $z^{\prime}\in \Rdd$ and $f\in L^{\infty}(\Rdd)$. We need to show that $\Gamma(T_{z^{\prime}}f)=\alpha_{z^{\prime}}(\Gamma(f))$, and by the uniqueness part of lemma \ref{lem:povmintegration} it suffices to show that
		 	\begin{equation} \label{eq:covariance}
		  \iint_{\Rdd} T_{z^{\prime}}f \ d\mu_{\psi}^F=\inner{\alpha_{z^{\prime}}(\Gamma(f))\psi}{\psi}.
		\end{equation}
		Note that
		\begin{align*}
		  \inner{\alpha_{z^{\prime}} (\Gamma(f))\psi}{\psi}&= \inner{\pi(z^{\prime})\Gamma(f)\pi(z^{\prime})^*\psi}{\psi} \\
		  &= \inner{\Gamma(f)\pi(z^{\prime})^*\psi}{\pi(z^{\prime})^*\psi} \\
		  &= \iint_{\Rdd} f \ d\mu_{\pi(z^{\prime})^*\psi}^F
		\end{align*}
		by lemma \ref{lem:povmintegration}. From the definition of the probability measure $\mu_{\pi(z^{\prime})^* \psi}^{F}$ and the covariance of $F$ we find that
		\begin{align*}
		  \mu_{\pi(z^{\prime})^* \psi}^{F}(\Omega)&=\inner{F(\Omega)\pi(z^{\prime})^* \psi}{\pi(z^{\prime})^* \psi} \\
		  &= \inner{\alpha_{z^{\prime}}(F(\Omega))\psi}{\psi} \\
		  &= \inner{(F(\Omega+z^{\prime}))\psi}{\psi} \\
		  &= \mu_{\psi}^F(\Omega+z^{\prime}).
		\end{align*}
		Hence 
		\begin{equation*}
		  \iint_{\Rdd} f \ d\mu_{\pi(z^{\prime})^*\psi}^F=\iint_{\Rdd} T_{z^{\prime}} f \ d\mu_{\psi}^F
		\end{equation*}
		by a change of variable, which proves equation \eqref{eq:covariance}.
 	\item By lemma \ref{lem:povmintegration}, the operator $\Gamma(f)=\iint_{\Rdd} f \ dF$ satisfies 
 		\begin{equation*}
  			\inner{\Gamma(f) \psi}{\psi}=\iint_{\Rdd} f(z) \ d\mu_{\psi}^F.
		\end{equation*}
		If $f$ is positive, the integral on the right hand side is clearly positive. Hence $\inner{\Gamma(f) \psi}{\psi}\geq 0$ for all $\psi \in \HS$, so $\Gamma(f)$ is a positive operator.
	\item To show that $\Gamma$ is weak*-weak*-continuous, we assume that a sequence $\{f_n\}_{n\in \N}$ in $L^{\infty}(\Rdd)$ converges in the weak*-topology to some $f\in L^{\infty}$. We need to show that $\tr(S\Gamma(f_n))$ converges to $\tr(S\Gamma(f))$ for each $S\in \tco$, since $\bo$ is the dual space of $\tco$. For each $S\in \tco$, the expression $\mu_{S}^{F}(\Omega)=\tr(SF(\Omega))$ defines a complex, finite measure on $\Rdd$, and lemma \ref{lem:povmintegration} may be extended to obtain that
	\begin{equation*}
  \tr \left(S\iint_{\Rdd}f \ dF\right) = \iint_{\Rdd} f \ d\mu_{S}^F
\end{equation*}
for each $f\in L^{\infty}(\Rdd)$, see the proof of lemma 6 in \cite{Kiukas:2006}. Furthermore, one can show \cite[Lemma 6(b)]{Kiukas:2006} that the covariance of $F$ implies that the measures $\mu_S^{F}$ are all absolutely continuous with respect to Lebesgue measure $\mu$. Hence $\mu_S^F$ has a Radon-Nikodym derivative $g_S\in L^1(\Rdd)$ such that $d\mu_S^F=g_S d\mu$. Using these facts we find that 
\begin{align*}
  \tr(S\Gamma(f))&= \tr \left(S\iint_{\Rdd}f_n \ dF\right)\\
  &= \iint_{\Rdd} f_n \ d\mu_{S}^F \\
  &=\iint_{\Rdd} f_n g_S d\mu \to \iint_{\Rdd} f g_S d\mu = \tr(S\Gamma(f))
\end{align*}
by the weak*-convergence of $\{f_n\}_{n\in \N}$.
 \end{enumerate}
\end{proof}
\begin{rem}
	If $S\in \tco$ is positive with $\tr(S)=1$, then $F(\Omega)=\chi_{\Omega}\star S$ for $\Omega \subset \Rdd$ defines a covariant POVM $F$ by proposition \ref{prop:genlocPOVM}, and $\iint_{\Rdd}f\ dF=f\star S$ for $f\in L^{\infty}(\Rdd)$ by proposition \ref{prop:convolutionasintegralofpovm}. The proof of proposition \ref{prop:covariantpovm} shows that $f\mapsto \iint_{\Rdd}f\ dF=f\star S$ satisfies the four axioms of theorem \ref{thm:poscorrule}, as we claimed in section \ref{sec:generalizedlocalization}.
\end{rem}
In our terminology this means that any covariant POVM $F$ is given by mixed-state localization operators: $F(\Omega)=\chi_{\Omega}\star S$ for some positive $S\in \tco$ with $\tr(S)=1$. In particular, the induced probability measures $\mu_{\psi}^F$ for $\psi\in \HS$ must be given by positive Cohen's class distributions with the correct total energy property, by lemma \ref{lem:radonnikodym}.

\end{document}